\definecolor{gray}{RGB}{128,128,128}
\newtheorem{theorem}{Theorem}
\newtheorem{assumption}{Assumption}
\newtheorem{lemma}{Lemma}
\newtheorem{corollary}{Corollary}
\newtheorem{remark}{Remark}
\definecolor{gray}{RGB}{128,128,128}
\begin{document}
\title{\textbf{Distributed Event-Triggered Bandit Convex Optimization with Time-Varying Constraints}}
\author{Kunpeng~Zhang,
        Xinlei~Yi,
        Guanghui~Wen,\\
        Ming~Cao,
        Karl~H.~Johansson,
        Tianyou~Chai,
        and Tao~Yang
\thanks{K. Zhang, T. Chai and T. Yang are with the State Key Laboratory of Synthetical Automation for Process Industries, Northeastern University, Shenyang 110819, China {\tt\small 2110343@stu.neu.edu.cn; \{tychai; yangtao\}@mail.neu.edu.cn}}%
\thanks{X. Yi is with the Laboratory for Information and Decision Systems, Massachusetts Institute of Technology, Cambridge, MA 02139, USA {\tt\small xinleiyi@mit.edu}}%
\thanks{G. Wen is with the Department of Mathematics, Southeast University, Nanjing 210096, China {\tt\small  ghwen@seu.edu.cn}}%
\thanks{M. Cao is with the Engineering and Technology Institute Groningen, Faculty of Science and Engineering, University of Groningen, AG 9747 Groningen, The Netherlands {\tt\small m.cao@rug.nl}}%
\thanks{K. H. Johansson is with Division of Decision and Control Systems, School of Electrical Engineering and Computer
Science, KTH Royal Institute of Technology, and he is also affiliated with Digital Futures, 10044, Stockholm, Sweden {\tt\small kallej@kth.se}}%
}


\maketitle

\begin{abstract}
This paper considers the distributed bandit convex optimization problem with time-varying inequality constraints over a network of agents, where the goal is to minimize network regret and cumulative constraint violation.
Existing distributed online algorithms require that each agent broadcasts its decision to its neighbors at each iteration.
To better utilize the limited communication resources, we propose a distributed event-triggered online primal--dual algorithm with two-point bandit feedback.
Under several classes of appropriately chosen decreasing parameter sequences and non-increasing event-triggered threshold sequences, we establish dynamic network regret and network cumulative constraint violation bounds.
These bounds are comparable to the results achieved by distributed event-triggered online algorithms with full-information feedback.
Finally, a numerical example is provided to verify the theoretical results.
\end{abstract}

\begin{IEEEkeywords}
Bandit convex optimization, cumulative constraint violation, distributed optimization, event-triggered algorithm, time-varying constraints.
\end{IEEEkeywords}


\section{INTRODUCTION}
Bandit convex optimization has drawn a growing attention due to its broad applications such as online routing in data networks and online advertisement placement in web search~\cite{Hazan2016a}.
Different from online convex optimization, where the decision maker receives full-information feedback for the loss function at each iteration \cite{Zinkevich2003, Hazan2007, Mokhtari2016, Yuan2018, Zhang2018, Guo2022, Zhang2024}, i.e., the loss function is revealed to the decision maker at each iteration,
in bandit convex optimization the decision maker receives bandit feedback for the loss function at each iteration, i.e., only the values of the loss function at some points are revealed to the decision maker at each iteration.
In general, regret is a common performance metric \cite{ShalevShwartz2012}, which measures the difference of the cumulative loss between the decision sequence selected by the decision maker and a comparator sequence.
When each element of the comparator sequence is the offline optimal static decision, this metric is called \textrm{static regret} \cite{Yang2016a, Yi2021b}.
When the comparator sequence is the offline optimal dynamic decision sequence, this metric is called \textrm{dynamic regret}.

Bandit convex optimization with time-invariant constraints is well studied.
For example, the authors of \cite{Flaxman2005} propose a projection-based online gradient descent algorithm with one-point bandit feedback, and establish an ${\cal O}({T^{3/4}})$ static regret bound for convex loss functions.
The authors of \cite{Agarwal2010} propose a projection-based online gradient descent algorithm by introducing the notable two-point bandit feedback, and establish an ${\cal O}(\sqrt T )$ static regret bound for convex loss functions.
The authors of~\cite{Mahdavi2012} consider the scenarios where constraints are characterized by static inequalities,
and introduce the idea of long-term constraints (i.e., inequality constraints are permitted to be violated but are fulfilled in the long run) to avoid projection operations onto the inequality constrained set due to the high computational complexity.
In contrast, bandit convex optimization with time-varying constraints are studied in \cite{Chen2019, Cao2019}, where constraints are characterized by time-varying inequalities. Different from the time-invariant constraint setting, where the decision maker knows the constrained set in advance when she makes a decision, in the time-varying constraint setting considered in \cite{Chen2019, Cao2019} the decision maker has no a priori knowledge of the current inequality constrained set, and the information of inequality constraint functions is revealed along with the values of the loss function after she makes a decision.

The aforementioned studies concentrate on centralized online algorithms with bandit feedback, which suffer a plethora of limitations, e.g., single point of failure, heavy communication and computation overhead \cite{Yang2019, Li2023}.
To deal with these limitations, distributed online algorithms with bandit feedback are developed in \cite{Yuan2021, Yuan2022a, Yuan2022, Cao2021a, Cao2022, Yi2023, Yan2024}.
Along the line of the time-invariant constraint setting, the authors of \cite{Yuan2021, Yuan2022a} propose the projection-based distributed online algorithms with two-point and one-point bandit feedback.
In the presence of feedback delays, the authors of \cite{Cao2021a} propose a projection-based distributed online algorithm with two-point bandit feedback, and analyze the impact of delay size on the algorithm performance.
In addition, the authors of \cite{Yuan2022, Cao2022, Yan2024} consider static inequality constraints, and use the idea of long-term constraints to reduce computational burden of projection operations.
For the time-varying constraint setting, the authors of \cite{Yi2023} propose a distributed online primal--dual algorithm with two-point bandit feedback by using two-point stochastic subgradient approximations for both loss and constraint functions at each iteration.

Note that in the above studies on distributed online algorithms with bandit feedback, all the decision makers require to collaboratively make decisions through local information exchange with their neighbors at each iteration.
However, communication resources are limited that frequent communication among the decision makers may cause network congestion.
To better utilize communication resources, the authors of \cite{Cao2021} propose a distributed event-triggered online algorithm with two-point bandit feedback, where each agent broadcasts the current local decision to its neighbors only if the norm of the difference between the current decision and its last broadcasted decision is not less than the current event-triggering threshold. Moreover, sublinear static regret is achieved when the event-triggering threshold sequence is non-increasing and converges to zero.
By using one-point and two-point stochastic subgradient estimators, two distributed event-triggered online algorithms with delayed bandit feedback are developed in \cite{Xiong2023}, and static regret bounds are established.

The existing studies on distributed event-triggered online algorithms with bandit feedback do not consider inequality constraints.
In this context,
this paper studies the distributed bandit convex optimization problem with time-varying constraints, where the decision makers receive bandit feedback for both loss and constraint functions at each iteration.
The contributions are summarized as follows.\vspace{-1pt}
\begin{itemize}
\item[$\bullet$]
This paper proposes a distributed event-triggered online primal--dual algorithm with two-point bandit feedback by integrating event-triggered communication with the distributed online algorithm in \cite{Yi2023}.
Note that the introduction of event-triggered communication causes nontrivial challenges for performance analysis, which will be explained in detail in Remark~2.
The proposed algorithm can be viewed as a bandit version of the distributed event-triggered online algorithm with full-information feedback in \cite{Zhang2024}.
Their proofs are significantly different, which will be also explained in detail in Remark~2.
Note that the authors in \cite{Cao2021, Xiong2023} do not consider inequality constraints and analyze static regret, and we consider time-varying inequality constraints and analyze dynamic regret. Moreover, the proposed algorithm only uses bandit feedback for constraint functions.

\item[$\bullet$]
When the step-size sequence of local primal variables is appropriately designed based on the event-triggering threshold sequence, this paper establishes dynamic network regret and network cumulative constraint violation bounds for the proposed algorithm under a non-increasing event-triggered threshold sequence (see Theorem~1).
The bounds would be sublinear if the path-length of the comparator sequence (i.e., the accumulated dynamic variation of the comparator sequence) grows sublinearly and the event-triggering threshold sequence converges to zero.
In addition, this paper also establishes dynamic network regret and network cumulative constraint violation bounds under the event-triggering threshold sequence produced by $\{ 1/{t^\theta } \}$ with $\theta  > 0$ (see Corollary~1) and the event-triggering threshold sequence produced by $\{1/{c^t}\}$ with $c  > 1$ (see Corollary~2).
These bounds are the same as the results achieved by the distributed event-triggered online algorithm with full-information feedback in \cite{Zhang2024}.
When event-triggered communication is not considered, these bounds recover the results achieved by the centralized online algorithm with two-point bandit feedback in \cite{Cao2019}.
When inequality constraints are not considered, these dynamic network regret bounds recover the results achieved by the distributed event-triggered online algorithms with two-point bandit feedback in \cite{Cao2021, Xiong2023}.
When both event-triggered communication and inequality constraints are not considered, these dynamic network regret bounds recover the results achieved by the centralized online algorithm with two-point bandit feedback in \cite{Agarwal2010} and the distributed online algorithm with two-point bandit feedback in \cite{Yuan2021}.
\item[$\bullet$]
When the step-size sequence of local primal variables is independently designed, this paper establishes dynamic network regret and network cumulative constraint violation bounds (see Theorem~2).
These bounds are the same as the results achieved by the distributed event-triggered online algorithm with full-information feedback in \cite{Zhang2024}.
When event-triggered communication is not considered, these bounds recover the results achieved by the distributed online algorithm with two-point bandit feedback in \cite{Yi2023}.
Note that this paper is among the first to establish dynamic network regret bounds (and network cumulative constraint violation bounds) for distributed event-triggered bandit convex optimization (with time-varying constraints).
\end{itemize}

\begin{table*}
\centering
\caption{Comparison of this paper to related works on bandit convex optimization.}
\scalebox{0.9}{
\begin{tabular}{c|c|c|c|c|c|c}
\Xcline{1-7}{1pt}
\multicolumn{2}{c|}{\multirow{2}{*}{Reference}} & {\multirow{2}{*}{Problem type}} & \multirow{2}{*}{Constraint type} & \multirow{2}{*}{Information feedback} & \multirow{2}{*}{Event-triggering} & \multirow{2}{*}{Regret type}\\
\multicolumn{2}{c|}{} & & & & & \\
\cline{1-7}
\multicolumn{2}{c|}{\multirow{2}{*}{\cite{Agarwal2010}}} & {\multirow{2}{*}{Centralized}} & \multirow{2}{*}{${g_t}( x ) \equiv {\mathbf{0}_m}$} & \multirow{2}{*}{\makecell{Two-point bandit feedback for ${f_t}$}} & \multirow{2}{*}{No} & \multirow{2}{*}{Static regret}\\
\multicolumn{2}{c|}{} & & & &  &\\
\cline{1-7}
\multicolumn{2}{c|}{\multirow{2}{*}{\cite{Chen2019}}} & {\multirow{2}{*}{Centralized}} & \multirow{2}{*}{\makecell{${g_t}( x ) \le {\mathbf{0}_m}$  \\ and Slater's condition}} & \multirow{2}{*}{\makecell{Two-point bandit feedback for ${f_t}$, and $\nabla {g_t}$}} & \multirow{2}{*}{No} & \multirow{2}{*}{Dynamic regret}\\
\multicolumn{2}{c|}{} & & & & & \\
\cline{1-7}
\multicolumn{2}{c|}{\multirow{2}{*}{\cite{Cao2019}}} & {\multirow{2}{*}{Centralized}} & \multirow{2}{*}{${g_t}( x ) \le {\mathbf{0}_m}$ } & \multirow{2}{*}{\makecell{Two-point bandit feedback for ${f_t}$ and $ {g_t}$}} & \multirow{2}{*}{No} & \multirow{2}{*}{Dynamic regret}\\
\multicolumn{2}{c|}{} & & & & & \\
\cline{1-7}
\multicolumn{2}{c|}{\multirow{2}{*}{\cite{Yuan2021}}} & {\multirow{2}{*}{Distributed}} & \multirow{2}{*}{${g_t}( x ) \equiv {\mathbf{0}_m}$} & \multirow{2}{*}{\makecell{Two-point bandit feedback for ${f_t}$}} & \multirow{2}{*}{No} & \multirow{2}{*}{Static regret}\\
\multicolumn{2}{c|}{} & & & &  &\\
\cline{1-7}
\multicolumn{2}{c|}{\multirow{2}{*}{\cite{Yi2023}}} & {\multirow{2}{*}{Distributed}} & \multirow{2}{*}{${g_t}( x ) \le {\mathbf{0}_m}$} & \multirow{2}{*}{\makecell{Two-point bandit feedback for ${f_t}$ and ${g_t}$}} & \multirow{2}{*}{No} & \multirow{2}{*}{Dynamic regret}\\
\multicolumn{2}{c|}{} & & & & & \\
\cline{1-7}
\multicolumn{2}{c|}{\multirow{2}{*}{\cite{Cao2021}}} & {\multirow{2}{*}{Distributed}} & \multirow{2}{*}{${g_t}( x ) \equiv {\mathbf{0}_m}$} & \multirow{2}{*}{\makecell{Two-point bandit feedback for ${f_t}$}} & \multirow{2}{*}{Yes} & \multirow{2}{*}{Static regret}\\
\multicolumn{2}{c|}{} & & & & & \\
\cline{1-7}
\multicolumn{2}{c|}{\multirow{2}{*}{\cite{Xiong2023}}} & {\multirow{2}{*}{Distributed}} & \multirow{2}{*}{${g_t}( x ) \equiv {\mathbf{0}_m}$} & \multirow{2}{*}{\makecell{Two-point bandit feedback for ${f_t}$}} & \multirow{2}{*}{Yes} & \multirow{2}{*}{Static regret}\\
\multicolumn{2}{c|}{} & & & & & \\
\cline{1-7}
\multicolumn{2}{c|}{\multirow{2}{*}{This paper}} & {\multirow{2}{*}{Distributed}} & \multirow{2}{*}{${g_t}( x ) \le {\mathbf{0}_m}$} & \multirow{2}{*}{\makecell{Two-point bandit feedback for ${f_t}$ and ${g_t}$}} & \multirow{2}{*}{Yes} & \multirow{2}{*}{Dynamic regret}\\
\multicolumn{2}{c|}{} & & & & & \\
\Xcline{1-7}{1pt}
\end{tabular}}
\end{table*}

The detailed comparison of this paper to related studies is summarized in TABLE~I.

The remainder of this paper is organised as follows.
Section~II presents the problem formulation and motivation.
Section~III proposes the distributed event-triggered online primal--dual algorithm with two-point bandit feedback, and analyzes its performance.
Section~IV demonstrates a numerical simulation to verify the theoretical results.
Finally, Section~V concludes this paper.
All proofs are given in Appendix.

\textbf{Notations:} ${\mathbb{N}_ + }$, $\mathbb{R}$, ${\mathbb{R}^p}$ and $\mathbb{R}_ + ^p$ denote the sets of all positive integers, real numbers, $p$-dimensional and nonnegative vectors, respectively. Given $m \in {\mathbb{N}_ + }$, $[ m ]$ denotes the set $\{ {1, \cdot  \cdot  \cdot ,m} \}$. Given vectors $x$ and $y$, ${x^T}$ denotes the transpose of the vector $x$, and $\langle {x,y} \rangle $ and $x \otimes y$ denote the standard inner and Kronecker product of the vectors $x$ and $y$, respectively. ${\mathbf{0}_m}$ denotes the $m$-dimensional column vector whose components are all $0$. $\mathrm{col}( {q_1}, \cdot  \cdot  \cdot ,{q_n} )$ denotes the concatenated column vector of ${q_i} \in {\mathbb{R}^{{m_i}}}$ for $i \in [ n ]$. ${\mathbb{B}^p}$ and ${\mathbb{S}^p}$ denote the unit ball and sphere centered around the origin in ${\mathbb{R}^p}$ under Euclidean norm, respectively. $\mathbf{E}$ denotes the expectation. For a set $\mathbb{K} \in {\mathbb{R}^p}$ and a vector $ x \in {\mathbb{R}^p}$, ${\mathcal{P}_{\mathbb{K}}}(  x  )$ denotes the projection of the vector $x$ onto the set $\mathbb{K}$, i.e., ${\mathcal{P}_{\mathbb{K}}}( x ) = \arg {\min _{y \in {\mathbb{K}}}}{\| {x - y} \|^2}$, and $[  x  ]_+$ denotes ${\mathcal{P}_{\mathbb{R}_ + ^p}}( x )$. For a function $f$ and a vector $ x $, $\partial f( x )$ denotes the subgradient of $f$ at $x$.

\section{PROBLEM FORMULATION AND MOTIVATION}
Consider the distributed bandit convex optimization problem with time-varying constraints.
At iteration $t$, a network of $n$ agents is modeled by a time-varying directed graph ${\mathcal{G}_t} = ( {\mathcal{V},{\mathcal{E}_t}} )$ with the agent set $\mathcal{V} = [ n ]$ and the edge set ${\mathcal{E}_t} \subseteq \mathcal{V} \times \mathcal{V}$. $( {j,i} ) \in {\mathcal{E}_t}$ indicates that agent $i$ can receive information from agent $j$.
The sets of in- and out-neighbors of agent $i$ are $\mathcal{N}_i^{\text{in}}( {{\mathcal{G}_t}} ) = \{ {j \in [ n ]|( {j,i} ) \in {\mathcal{E}_t}} \}$ and $\mathcal{N}_i^{\text{out}}( {{\mathcal{G}_t}} ) = \{ {j \in [ n ]|( {i,j} ) \in {\mathcal{E}_t}} \}$, respectively.
An adversary first erratically selects $n$ local convex loss functions $\{ {{f_{i,t}}:\mathbb{X} \to \mathbb{R}} \}$ and $n$ local convex constraint functions $\{ {{g_{i,t}}:\mathbb{X} \to {\mathbb{R}^{{m_i}}}} \}$ for $i \in [n]$, where $\mathbb{X} \subseteq {\mathbb{R}^p}$ is a known set, and both ${m_i}$ and $p$ are positive integers. Then, the agents collaborate to select their local decisions $\{ {{x_{i,t}} \in \mathbb{X}} \}$ without prior access to $\{ {{f_{i,t}}} \}$ and $\{ {{g_{i,t}}} \}$. At the same time, the values of ${f_{i,t}}$ and ${g_{i,t}}$ at the point ${x_{i,t}}$ as well as at other potential points are privately revealed to agent $i$.
The goal of the network is to choose the decision sequence $\{ {{x_{i,t}}} \}$ for $i \in [n]$ and $t \in [T]$ such that both network regret
\begin{flalign}
{\rm{Net}\mbox{-}\rm{Reg}}( {\{ {{x_{i,t}}} \},{y_{[ T ]}}} ) &:= \frac{1}{n}\sum\limits_{i = 1}^n {\sum\limits_{t = 1}^T {{f_t}( {{x_{i,t}}} )} }  - \sum\limits_{t = 1}^T {{f_t}( {{y_t}} )}, \label{regret-eq1}
\end{flalign}
and network cumulative constraint violation
\begin{flalign}
{\rm{Net}\mbox{-}\rm{CCV}}( {\{ {{x_{i,t}}} \}} ) &:= \frac{1}{n}\sum\limits_{i = 1}^n {\sum\limits_{t = 1}^T {\| {{{[ {{g_t}( {{x_{i,t}}} )} ]}_ + }} \|} }, \label{CCV-eq2}
\end{flalign}
increase sublinearly, where ${y_{\left[ T \right]}} = \left( {{y_1}, \cdot  \cdot  \cdot ,{y_T}} \right)$ is a comparator
sequence, ${f_t}( x ) = \frac{1}{n}\sum\nolimits_{j = 1}^n {{f_{j,t}}( x )} $ and ${g_t}( x ) = {\rm{col}}\big( {{g_{1,t}}( x ), \cdot  \cdot  \cdot ,{g_{n,t}}( x )} \big) \in {\mathbb{R}^m}$ are the global loss and constraint functions of the network at iteration $t$, respectively, and $m = \sum\nolimits_{i = 1}^n {{m_i}} $.

Note that network regret \eqref{regret-eq1} measures the difference of the network-wide cumulative loss between the decision sequence $\{ {{x_{i,t}}} \}$ and the comparator sequence ${y_{[ T ]}}$, which is also used in \cite{Zhang2024, Yi2023}. This regret is different with the used regrets in \cite{Yuan2021, Cao2021, Xiong2023} that measure the difference of the cumulative loss between the decision sequence $\{ {{x_{i,t}}} \}$ of a single agent and the comparator sequence ${y_{[ T ]}}$. Because the established bounds of the used regrets in \cite{Yuan2021, Cao2021, Xiong2023} are uniform bounds for all the agents, we can compare the established bounds of network regret in this paper with those in \cite{Yuan2021, Cao2021, Xiong2023}.

In the literature, there are two commonly used comparator sequences.
One is the offline optimal dynamic decision sequence $\check{x}_{[ T ]}^ *  = ( {\check{x}_1^ * , \cdots,\check{x}_T^ * } )$, where $\check{x}_t^ * \in \mathbb{X}$ is the minimizer of ${f_t}( x )$ subject to ${g_t}( x ) \le {\mathbf{0}_m}$. To guarantee that the offline optimal dynamic decision sequence $\check{x}_{[ T ]}^ *$ always exists, we assume that for any $T \in {\mathbb{N}_ + }$, the set of all the feasible decision sequences
\begin{flalign}
{\mathcal{\check{X}}_T} = \{ {( {{x_1}, \cdot  \cdot  \cdot ,{x_T}} ):{x_t} \in \mathbb{X}, {g_t}( {{x_t}} ) \le {\mathbf{0}_m},\forall t \in [ T ]} \}, \label{pl-eq5}
\end{flalign}
is non-empty. In this case, ${\rm{Net}\mbox{-}\rm{Reg}}( {\{ {{x_{i,t}}} \},{\check{x}_{[ T ]}^ *}} )$ is called the dynamic network regret. Another comparator sequence is the offline optimal static decision sequence $\hat x_{[ T ]}^ *  = ( {\hat{x}^ * , \cdots,\hat{x}^ * } )$, where $\hat{x}^ * \in \mathbb{X}$ is the minimizer of $\sum\nolimits_{t = 1}^T {{f_t}( x )} $ subject to ${g_t}( x ) \le {\mathbf{0}_m}$ for all $t \in [T]$. To guarantee that the offline optimal static decision sequence always exists, we assume that for any $T \in {N_ + }$, the set of all the feasible static decision sequences
\begin{flalign}
{\mathcal{\hat{X}}_T} = \{ {( {x, \cdot  \cdot  \cdot ,x} ):x \in \mathbb{X}, {g_t}( x ) \le {\mathbf{0}_m},\forall t \in [ T ]} \}, \label{pl-eq6}
\end{flalign}
is non-empty. In this case, ${\rm{Net}\mbox{-}\rm{Reg}}( {\{ {{x_{i,t}}} \},{\hat x_{[ T ]}^*}} )$ is called the static network regret.

In this paper, the following assumptions are made, which are commonly adopted in distributed online convex optimization, see \cite{Cao2021, Paul2022, Oakamoto2023, Xiong2023, Yi2023}, and recent survey paper \cite{Li2023} and references therein.
\begin{assumption}
(i) The set $\mathbb{X}$ is convex and closed.
Moreover, the convex set $\mathbb{X}$ contains the ball of radius $r( \mathbb{X} )$ and is contained in the ball of radius $R( \mathbb{X} )$, i.e.,
\begin{flalign}
r( \mathbb{X} ){\mathbb{B}^p} \subseteq \mathbb{X} \subseteq R( \mathbb{X} ){\mathbb{B}^p}. \label{ass-eq1}
\end{flalign}
(ii) For all $i \in [n]$, $t \in {\mathbb{N}_ + }$, the local loss functions ${f_{i,t}}$ and constraint functions ${g_{i,t}}$ are convex, and there exists a constant ${F_1}$ such that
\begin{subequations}
\begin{flalign}
| {{f_{i,t}}( x ) - {f_{i,t}}( y )} | &\le {F_1}, \label{ass-eq2a}\\
\| {{g_{i,t}}( x )} \| &\le {F_1}, x, y \in \mathbb{X}. \label{ass-eq2b}
\end{flalign}
\end{subequations}
(iii) For all $i \in [n]$, $t \in {\mathbb{N}_ + }$, the subgradients $\partial {f_{i,t}}( x )$ and $\partial {g_{i,t}}( x )$ exist, and there exists a constant ${F_2}$ such that
\begin{subequations}
\begin{flalign}
\| {\partial {f_{i,t}}( x )} \| &\le {F_2}, \label{ass-eq3a}\\
\| {\partial {g_{i,t}}( x )} \| &\le {F_2}, x \in \mathbb{X}. \label{ass-eq3b}
\end{flalign}
\end{subequations}
\end{assumption}
\begin{assumption}
For $t \in {\mathbb{N}_ + }$, the time-varying directed graph $\mathcal{G}_t$ satisfies that

\noindent (i) There exists a constant $w  \in ( {0,1} )$ such that ${[ {{W_t}} ]_{ij}} \ge w$ if $( {j,i} ) \in {\mathcal{E}_t}$ or $i = j$, and ${[ {{W_t}} ]_{ij}} = 0$ otherwise.

\noindent (ii) The mixing matrix ${W_t}$ is doubly stochastic, i.e., ${\sum\nolimits_{i = 1}^n {[ {{W_t}} ]} _{ij}} = {\sum\nolimits_{j = 1}^n {[ {{W_t}} ]} _{ij}} = 1$, $\forall i,j \in [ n ]$.

\noindent (iii) There exists an integer $B > 0$ such that the time-varying directed graph $( {\mathcal{V}, \cup _{l = 0}^{B - 1}{\mathcal{E} _{t + l}}} )$ is strongly connected.
\end{assumption}

Assumption~1 implies that the local loss functions ${f_{i,t}}$ and constraint functions ${g_{i,t}}$ are convex and Lipschitz continuous on $\mathbb{X}$.
Assumption~2 implies that the time-varying directed graph~$\mathcal{G}_t$ need not be connected at each iteration.

The considered problem is studied in \cite{Yi2023}, where the authors propose a distributed online primal--dual algorithm with two-point bandit feedback. Note that the algorithm requires that each agent broadcasts the current decision to its neighbors through the communication network at each iteration.
However, communication resources are limited that frequent communication between the agents may cause network congestion
in many practical applications, e.g., sensor networks comprised of cheap sensors with small battery capacity \cite{Cao2021}. To better utilize the limited communication resources, this paper proposes a distributed event-triggered online primal–dual algorithm with two-point bandit feedback by integrating event-triggered communication with the algorithm in \cite{Yi2023}, and establishes network regret and cumulative constraint violation bounds for the proposed algorithm. Based on these bounds, this paper also discusses the impact of event-triggered threshold on the algorithm performance.

\section{DISTRIBUTED EVENT-TRIGGERED ONLINE PRIMAL--DUAL ALGORITHM WITH TWO-POINT BANDIT FEEDBACK}

This section proposes the distributed event-triggered online primal--dual algorithm with two-point bandit feedback. The proposed algorithm can be viewed as an event-triggered version of the distributed online algorithm with two-point bandit feedback in \cite{Yi2023}, or a bandit version of the distributed event-triggered online algorithm with full-information feedback in \cite{Zhang2024}. This section also establishes network regret and cumulative constraint violation bounds for the proposed algorithm.

\subsection{Algorithm Description}
\begin{algorithm}
  \caption{Distributed Event-Triggered Online Primal--Dual Algorithm with Two-Point Bandit Feedback} 
  \begin{algorithmic}
  \renewcommand{\algorithmicrequire}{\textbf{Input:}}
  \REQUIRE
    constant ${r( \mathbb{X} )}$, non-increasing sequences $\{ {\alpha _t}\} \subseteq ( {0, + \infty })$, $\{ {\beta _t}\} \subseteq ( {0, + \infty })$, $\{ {\gamma _t}\} \subseteq ( {0, + \infty })$, $\{ {\tau _t}\} \subseteq ( {0, + \infty })$, $\{ {{\xi _t}} \} \subseteq ( {0,1} )$, and $\{ {{\delta _t}} \} \subseteq ( {0,r( \mathbb{X} ){\xi _t}} ]$.
  \renewcommand{\algorithmicrequire}{\textbf{Initialize:}}
  \REQUIRE
       ${x_{i,1}} \in {(1-\xi_{1})\mathbb{X}}$, ${{\hat x}_{i,1}} = {x_{i,1}}$ and ${q_{i,1}} = {\mathbf{0}_{{m_i}}}$.
    \STATE Broadcast ${{\hat x}_{i,1}}$ to $\mathcal{N}_i^{\text{out}}( {{\mathcal{G}_1}} )$ and receive ${{\hat x}_{j,1}}$ from $j \in \mathcal{N}_i^{\text{in}}( {{\mathcal{G}_1}} )$ for $i \in [ n ]$.
    \FOR {$t = 1, \cdot  \cdot  \cdot, T-1 $}
    \FOR {$i = 1,\cdot  \cdot  \cdot,n$ in parallel}
    \STATE Select vector ${u_{i,t}} \in {\mathbb{S}^p}$ independently and uniformly at random.
    \STATE Observe ${{f_{i,t}}( {{x_{i,t}}} )}$, ${{f_{i,t}}( {{x_{i,t}} + {\delta _t}{u_{i,t}}} )}$, ${{{[ {{g_{i,t}}( {{x_{i,t}}} )} ]}_ + }}$, and ${{{[ {{g_{i,t}}( {{x_{i,t}} + {\delta _t}{u_{i,t}}} )} ]}_ + }}$.
    \STATE \textbf{Distributed consensus protocol}:\par\nobreak\vspace{-10pt}
    \begin{small}
     \begin{flalign}
       {z_{i,t + 1}} &= \sum\limits_{j = 1}^n {{{[{W_t}]}_{ij}}{{\hat x}_{j,t}}}. \label{Algorithm1-eq1}
    \end{flalign}
      \end{small}%
    \STATE \textbf{Primal--dual protocol}:\par\nobreak\vspace{-10pt}
    \begin{small}
    \begin{subequations}
     \begin{flalign}
       {\hat{\omega} _{i,t + 1}} &= \hat{\partial} {f_{i,t}}({x_{i,t}}) + \hat{\partial} {[{g_{i,t}}({x_{i,t}})]_ + }{q_{i,t}}, \label{Algorithm1-eq2}\\
       {x_{i,t + 1}} &= {\mathcal{P}_{(1-\xi_{t+1})\mathbb{X}}}({z_{i,t + 1}} - {\alpha _{t + 1}}{\hat{\omega} _{i,t + 1}}), \label{Algorithm1-eq3}\\
       \nonumber
       {q_{i,t + 1}} &= \Big[ ( {1 - {\beta _{t + 1}}{\gamma _{t + 1}}} ){q_{i,t}} + {\gamma _{t + 1}}\Big( {[{g_{i,t}}(x_{i,t})]}_ +   \\
       &\;\;+ {{\big( {\hat{\partial} {{[ {{g_{i,t}}({x_{i,t}})} ]}_ + }} \big)}^T}( {{x_{i,t + 1}} - {x_{i,t}}} ) \Big) \Big]_ +. \label{Algorithm1-eq4}
      \end{flalign}
      \end{subequations}
      \end{small}%
      \STATE \textbf{Event-triggering check}:
     \IF {$\| {{x_{i,t + 1}} - {{\hat x}_{i,t}}} \| \ge {\tau _{t + 1}}$}
     \STATE Set ${{\hat x}_{i,t + 1}} = {x_{i,t + 1}}$, and broadcast ${{{\hat x}_{i,t + 1}}}$ to $\mathcal{N}_i^{\text{out}}( {{\mathcal{G}_{t+1}}} )$.
     \ELSE
     \STATE Set ${{\hat x}_{i,t + 1}} = {{\hat x}_{i,t}}$, and do not broadcast.
     \ENDIF
    \ENDFOR
    \ENDFOR
  \renewcommand{\algorithmicensure}{\textbf{Output:}}
  \ENSURE
      $\{ x_{i,t} \}$.
  \end{algorithmic}
\end{algorithm}
This proposed algorithm is presented in pseudo-code as Algorithm~1, and its architecture is shown in Fig.~1.
For $t \in [ T ]$ with $t \ge 2$ and ${i \in [ n ]}$,
same as the distributed online algorithm with two-point bandit feedback in \cite{Yi2023}, Algorithm~1 uses the distributed consensus protocol \eqref{Algorithm1-eq1} to compute ${z_{i,t}} \in \mathbb{X}$ for agent $i$ via the time-varying directed graph $\mathcal{G}_t$, which estimates the average value of the local decisions of all agents $\frac{1}{n}\sum\nolimits_{i = 1}^n {{x_{i,t}}} $. Then, Algorithm~1 uses the primal--dual protocol \eqref{Algorithm1-eq2}--\eqref{Algorithm1-eq4} to update the local primal variable ${x_{i,t}} \in \mathbb{X}$ and dual variable ${q_{i,t}} \in \mathbb{R}_ + ^{{m_i}}$, where ${\hat{\omega} _{i,t}}$ is the updating direction of the local primal variable ${x_{i,t}}$, ${\alpha _t}$ and ${\beta _t}$ are the step-sizes of the local primal variable ${x_{i,t}}$ and the local dual variable ${q_{i,t}}$, respectively, and ${\gamma _t}$ is the regularization parameter used to influence the structure of the local decisions.
Different from the distributed online algorithm with two-point bandit feedback in \cite{Yi2023},
Algorithm~1 uses the event-triggering check such that agent $i$ broadcasts its current local decision ${x_{i,t}}$ to its neighbors only if the norm of the difference between the current decision and its last broadcasted decision ${x_{i,t-1}}$ is not less than the current event-triggering threshold ${\tau _t}$.

Note that different from the distributed event-triggered online algorithm with full-information feedback in \cite{Zhang2024},
Algorithm~1 uses the values of the local loss function ${f_{i,t}}$ at ${x_{i,t}}$ and ${x_{i,t}} + {\delta _t}{u_{i,t}}$ to estimate the subgradient $\partial {f_{i,t}}( {{x_{i,t}}} )$, and uses the values of the local constraint function ${g_{i,t}}$ at ${x_{i,t}}$ and ${x_{i,t}} + {\delta _t}{u_{i,t}}$ to estimate the subgradient $\partial {[ {{g_{i,t}}( {{x_{i,t}}} )} ]_ + }$ as the subgradients are unavailable in bandit setting.
The subgradient approximations follow the two-point stochastic subgradient estimator proposed in \cite{Agarwal2010}, which are given by
\begin{flalign}
\nonumber
\hat \partial {f_{i,t}}( {{x_{i,t}}} ) &= \frac{p}{{{\delta _t}}}\big( {{f_{i,t}}( {{x_{i,t}} + {\delta _t}{u_{i,t}}} ) - {f_{i,t}}( {{x_{i,t}}} )} \big){u_{i,t}} \in {\mathbb{R}^p}, \\
\nonumber
\hat \partial {[ {{g_{i,t}}( {{x_{i,t}}} )} ]_ + } &= \frac{p}{{{\delta _t}}}{\big( {{{[ {{g_{i,t}}( {{x_{i,t}} + {\delta _t}{u_{i,t}}} )} ]}_ + } - {{[ {{g_{i,t}}( {{x_{i,t}}} )} ]}_ + }} \big)^T}
\otimes {u_{i,t}} \in {\mathbb{R}^{p \times {m_i}}},
\end{flalign}
where ${\delta _t} \in ( {0,r( \mathbb{X} ){\xi _t}} ]$ is an exploration parameter, ${r( \mathbb{X} )}$ is a constant given in Assumption~1, ${\xi _t} \in ( {0,1} )$ is a shrinkage coefficient, and ${u_{i,t}} \in {\mathbb{S}^p}$ is a uniformly distributed random vector.

\begin{figure*}
 \centering
  \includegraphics[width=16cm]{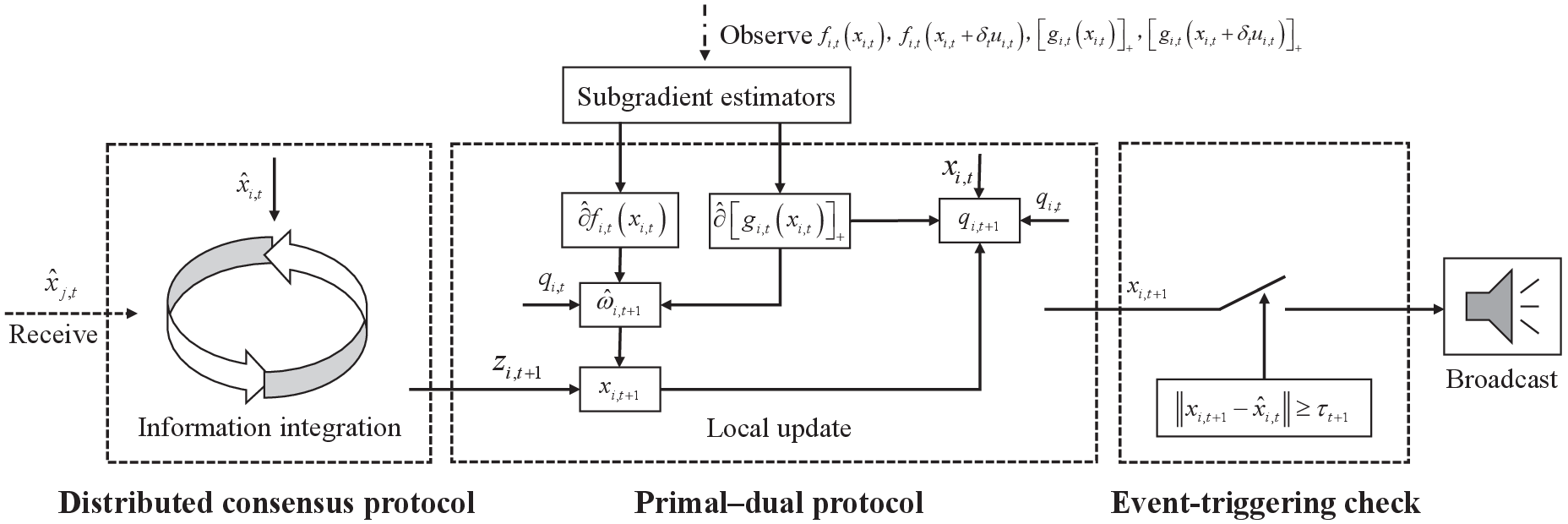}
  \caption{Architecture of the distributed event-triggered online primal--dual algorithm with two-point bandit feedback.}
\end{figure*}

\subsection{Performance Analysis}
We first appropriately design the parameter sequences for Algorithm~1, and establish dynamic network regret and network cumulative constraint violation bounds in the following theorem.
\begin{theorem}\label{thm3}
Suppose Assumptions 1 and 2 hold. Let $\{ {{x_{i,t}}} \}$ be the sequences generated by Algorithm~1 with
\begin{flalign}
{\alpha _t} = \sqrt {\frac{{{\Psi _t}}}{t}}, {\beta _t} = \frac{1}{{{t^\kappa }}}, {\gamma _t} = \frac{1}{{{t^{1 - \kappa }}}},
{\xi _t} = \frac{1}{{t + 1}}, {\delta _t} = \frac{{r( \mathbb{X} )}}{{t + 1}}, \forall t \in {\mathbb{N}_ + }, \label{theorem1-eq1}
\end{flalign}
where ${\Psi _t} = \sum\nolimits_{k = 1}^t {{\tau _k}} $, $\kappa  \in ( {0,1} )$ are constants. Then, for any $T \in {\mathbb{N}_ + }$ and any comparator sequence ${y_{[ T ]}} \in {\mathcal{X}_T}$,
\begin{flalign}
\mathbf{E}[{{\rm{Net}\mbox{-}\rm{Reg}}( {\{ {{x_{i,t}}} \},{y_{[ T ]}}} )}] &= \mathcal{O}( {T^\kappa } + \sqrt {{\Psi_T} T}  + \sqrt {{{\Psi_T} ^{ - 1}}T} {P_T} ), \label{theorem1-eq2}\\
\mathbf{E}[{{\rm{Net} \mbox{-} \rm{CCV}}( {\{ {{x_{i,t}}} \}} )}]
&= \mathcal{O}( {T^{1 - \kappa /2}} + \Psi _T^{1/4}{T^{3/4}} ). \label{theorem1-eq3}
\end{flalign}
where ${P_T} = \sum\nolimits_{t = 1}^{T - 1} {\| {{y_{t + 1}} - {y_t}} \|} $ is the path-length of the comparator sequence ${y_{[ T ]}}$.
\end{theorem}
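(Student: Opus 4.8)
The plan is to combine a primal--dual (Lagrangian) regret analysis with the standard two-point bandit smoothing machinery of~\cite{Agarwal2010}, while carefully tracking the extra perturbation that event-triggered communication injects into the consensus step. Throughout, write $\bar x_t=\frac1n\sum_{i=1}^n x_{i,t}$ for the average decision, $\hat f_{i,t}(x)=\mathbf{E}_{v\in\mathbb{B}^p}[f_{i,t}(x+\delta_t v)]$ and $\hat g_{i,t}(x)=\mathbf{E}_{v\in\mathbb{B}^p}[g_{i,t}(x+\delta_t v)]$ for the $\delta_t$-smoothed local functions, and $\tilde x_{i,t}=x_{i,t}-\hat x_{i,t}$ for the event-triggering error. \textbf{Step~1 (bandit and event-triggering preliminaries).} First I would record the facts that make the estimators usable: conditioned on $x_{i,t}$, the estimators $\hat\partial f_{i,t}(x_{i,t})$ and $\hat\partial[g_{i,t}(x_{i,t})]_+$ are unbiased for $\nabla\hat f_{i,t}(x_{i,t})$ and $\nabla[\hat g_{i,t}(x_{i,t})]_+$ respectively; Assumption~1(iii) gives the almost-sure bounds $\|\hat\partial f_{i,t}(x_{i,t})\|\le pF_2$ and $\|\hat\partial[g_{i,t}(x_{i,t})]_+\|\le pF_2$; the smoothing bias satisfies $|\hat f_{i,t}(x)-f_{i,t}(x)|\le F_2\delta_t$ and $\|\hat g_{i,t}(x)-g_{i,t}(x)\|\le F_2\delta_t$, with $\hat f_{i,t}$ and $[\hat g_{i,t}]_+$ convex and $F_2$-Lipschitz; the event-triggering check in Algorithm~1 forces $\|\tilde x_{i,t}\|\le\tau_t$ for $t\ge 2$; and since $(1-\xi_t)y_t\in(1-\xi_t)\mathbb{X}$ with $\|y_t-(1-\xi_t)y_t\|\le\xi_t R(\mathbb{X})$, replacing the comparator $y_t$ by $(1-\xi_t)y_t$ costs at most $F_2\xi_t R(\mathbb{X})$ per step in both the loss and the constraint, which is summable to $\mathcal{O}(\log T)$ under~\eqref{theorem1-eq1}.

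\textbf{Step~2 (dual iterates and network disagreement).} From~\eqref{Algorithm1-eq4} and the non-expansiveness of $[\cdot]_+$ I would derive a drift inequality for $\|q_{i,t+1}\|^2$ in terms of $\|q_{i,t}\|^2$, an inner product $\langle q_{i,t},[g_{i,t}(x_{i,t})]_++(\hat\partial[g_{i,t}(x_{i,t})]_+)^{T}(x_{i,t+1}-x_{i,t})\rangle$, and a remainder of order $\gamma_{t+1}^2(F_1^2+p^2F_2^2\|x_{i,t+1}-x_{i,t}\|^2)$; since $\|x_{i,t+1}-x_{i,t}\|$ is controlled by $\alpha_{t+1}\|\hat\omega_{i,t+1}\|$, the consensus step and $\tau_t$, this yields a bound on $\mathbf{E}[\|q_{i,t}\|]$ (hence on $\mathbf{E}[\|\hat\omega_{i,t+1}\|]\le pF_2(1+\mathbf{E}[\|q_{i,t}\|])$) slow enough for the later sums to converge. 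Next, unrolling~\eqref{Algorithm1-eq1} as $z_{i,t+1}=\sum_{j}[W_t]_{ij}x_{j,t}-\sum_{j}[W_t]_{ij}\tilde x_{j,t}$ together with the projected update~\eqref{Algorithm1-eq3}, and invoking Assumption~2 (double stochasticity and $B$-strong connectivity), gives geometric mixing: there exist $C>0$ and $\lambda\in(0,1)$ with $\mathbf{E}[\|x_{i,t}-\bar x_t\|]\le C\sum_{s=1}^{t-1}\lambda^{t-s}(\alpha_{s+1}\max_j\mathbf{E}[\|\hat\omega_{j,s+1}\|]+\tau_s+R(\mathbb{X})|\xi_s-\xi_{s+1}|)+C\lambda^{t}$. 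The $\tau_s$ term is the genuine contribution of event-triggered communication; since $\{\tau_t\}$ is non-increasing and $\sum_t\xi_t=\mathcal{O}(\log T)$, summing over $t$ and substituting $\alpha_t=\sqrt{\Psi_t/t}$ gives $\sum_{t=1}^{T}\frac1n\sum_{i}\mathbf{E}[\|x_{i,t}-\bar x_t\|]=\mathcal{O}(\sqrt{\Psi_T T}+\Psi_T)$.

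\textbf{Steps~3--4 (master inequality and summation).} Expanding $\|x_{i,t+1}-(1-\xi_{t+1})y_{t+1}\|^2$ from~\eqref{Algorithm1-eq3}, using projection non-expansiveness, double stochasticity to pass from $z_{i,t+1}$ to $\bar x_t$ plus the disagreement of Step~2, the convexity of $\hat f_{i,t}$ and $[\hat g_{i,t}]_+$ to turn $\langle\hat\partial f_{i,t}(x_{i,t}),x_{i,t}-(1-\xi_t)y_t\rangle$ and the analogous constraint inner product into function-value differences, and the dual drift of Step~2 to convert the $\langle q_{i,t},\cdot\rangle$ terms into $\|[g_{i,t}(x_{i,t})]_+\|$-type terms, I obtain — after taking $\mathbf{E}$ over $\{u_{i,t}\}$ and averaging over $i$ — a per-iteration inequality that telescopes in $\frac1{\alpha_t}\|x_{i,t}-(1-\xi_t)y_t\|^2$ and $\frac1{\gamma_t}\|q_{i,t}\|^2$, up to residuals of orders $\alpha_t(1+\mathbf{E}[\|q_{i,t}\|^2])$, $\beta_t\mathbf{E}[\|q_{i,t}\|^2]$, $\gamma_t$, the disagreement sum, $\delta_t$ and $\xi_t R(\mathbb{X})$, plus the comparator-variation term $\frac1{\alpha_t}\|y_{t+1}-y_t\|$. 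Summing over $t\in[T]$ and substituting~\eqref{theorem1-eq1} yields~\eqref{theorem1-eq2}: $\sum_t\alpha_t\le\sqrt{\Psi_T}\sum_t t^{-1/2}=\mathcal{O}(\sqrt{\Psi_T T})$; the variation terms aggregate to $\mathcal{O}(\sqrt{\Psi_T^{-1}T}\,P_T)$ because $1/\alpha_t=\sqrt{t/\Psi_t}$ is non-decreasing (as $\Psi_t/t$ is an average of a non-increasing sequence), hence $\le\sqrt{T/\Psi_T}$; and the $\mathcal{O}(T^\kappa)$ term comes from the $\beta_t=t^{-\kappa}$ contributions together with the telescoped boundary terms bounded via $\|x_{i,t}-(1-\xi_t)y_t\|\le 2R(\mathbb{X})$. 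For~\eqref{theorem1-eq3}, I isolate the constraint-violation term, relate $\|[g_t(x_{i,t})]_+\|$ to the per-agent quantities $\|[g_{j,t}(x_{j,t})]_+\|$ plus disagreement plus $\mathcal{O}(\delta_t)$, bound $\sum_t\|[g_{j,t}(x_{j,t})]_+\|$ via~\eqref{Algorithm1-eq4} in terms of $\gamma_t^{-1}$, $\beta_t$, $\sum_t\|x_{j,t+1}-x_{j,t}\|$ and the dual iterates, and substitute the schedule to get $\mathcal{O}(T^{1-\kappa/2}+\Psi_T^{1/4}T^{3/4})$.

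\textbf{Main obstacle.} The delicate point is the coupling between Steps~2 and~3: the event-triggering error $\tau_t$ enters both the consensus dynamics and, through $z_{i,t+1}$, the primal update, so it must be absorbed into the same budget as the bandit gradient steps — which is exactly what forces the data-dependent step size $\alpha_t=\sqrt{\Psi_t/t}$ and why $\Psi_T=\sum_{k=1}^{T}\tau_k$ appears in every bound. Simultaneously, the extra factor $p$ in the estimator norms and the $\mathcal{O}(\delta_t)$ smoothing bias (both absent in the full-information setting) must be kept from inflating the dominant terms, and the dual variables must remain controlled under these perturbed updates; together these are what make the analysis substantially different from the full-information argument of~\cite{Zhang2024}, as anticipated in Remark~2.
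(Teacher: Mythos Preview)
Your outline follows the same primal--dual/consensus route as the paper, and Steps~1--2 are essentially the content of the paper's Lemmas~1--3. But the accounting in Step~3 is off in a way that, if carried out as written, would not close.

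You list ``residuals of orders $\alpha_t(1+\mathbf{E}[\|q_{i,t}\|^2])$, $\beta_t\mathbf{E}[\|q_{i,t}\|^2]$'' and then say the $\mathcal{O}(T^\kappa)$ term ``comes from the $\beta_t=t^{-\kappa}$ contributions together with the telescoped boundary terms''. Neither works. The only a~priori dual bound is $\|q_{i,t}\|\le\hat\varpi_1/\beta_t=\hat\varpi_1 t^\kappa$ (the paper's Lemma~3), so $\sum_t\beta_t\|q_{i,t}\|^2\lesssim\sum_t t^\kappa=\mathcal{O}(T^{1+\kappa})$ and $\sum_t\alpha_t\|q_{i,t}\|^2$ is likewise superlinear for $\kappa>1/4$. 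In the paper these are \emph{not} residuals. The dual telescope gives $\sum_t\Delta_{i,t+1}(\mathbf{0}_{m_i})\ge\tfrac12\sum_t\bigl(\tfrac1{\gamma_t}-\tfrac1{\gamma_{t+1}}+\beta_{t+1}\bigr)\|q_{i,t}\|^2$, and the point of the schedule $\gamma_t\beta_t=1/t$ is precisely that $\tfrac1{\gamma_t}-\tfrac1{\gamma_{t+1}}+\beta_{t+1}=t^{1-\kappa}-(t+1)^{1-\kappa}+(t+1)^{-\kappa}>0$ (the paper's check~\eqref{theorem1-proof-eq4}), so this entire sum is non-negative and is \emph{dropped} from the regret upper bound, not summed. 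Similarly, the paper's one-step bound (Lemma~4) carries the negative term $-\tfrac1{2\alpha_{t+1}}\|\varepsilon_{i,t}^x\|^2$, against which the disagreement and step-length terms are traded via AM--GM with a tuned constant ($a=10$ in~\eqref{lemma5-proof-eq16}); no $\alpha_t\|q_{i,t}\|^2$ survives. The genuine $\mathcal{O}(T^\kappa)$ in~\eqref{theorem1-eq2} comes from $\sum_t\gamma_t=\sum_t t^{\kappa-1}$ and from $\sum_t(R(\mathbb{X})\xi_t+\delta_t)\|q_{i,t}\|\lesssim\sum_t t^{-1}\cdot t^\kappa$; the primal boundary term $2R(\mathbb{X})^2/\alpha_{T+1}=\mathcal{O}(\sqrt{T/\Psi_T})$ is absorbed by $\sqrt{\Psi_T T}$ since $\Psi_T\ge\tau_1>0$.

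Your sketch for~\eqref{theorem1-eq3} (``bound $\sum_t\|[g_{j,t}(x_{j,t})]_+\|$ via~\eqref{Algorithm1-eq4}'') also misses the mechanism. The paper reruns the same master inequality but with $\mu_i$ kept free, so that the left-hand side contains the concave quadratic $h_{ij}(\mu_i)=\mu_i^T\sum_t[g_{i,t}(x_{j,t})]_+-\tfrac12\|\mu_i\|^2\bigl(\tfrac1{\gamma_1}+\sum_t(\beta_t+40\hat\varpi_3\alpha_t)\bigr)$; maximising over $\mu_i$ (i.e.\ $\mu_i=\hat\mu_{ij}^0$) yields $\|\sum_t[g_{i,t}(x_{j,t})]_+\|^2$ divided by that bracket, and bounding the right-hand side together with $-{\rm Net}\mbox{-}{\rm Reg}\le F_1T$ gives $\tfrac1n\sum_i\|\sum_t[g_t(x_{i,t})]_+\|^2=\mathcal{O}\bigl((T^{1-\kappa}+\sqrt{\Psi_T T})\cdot T\bigr)$; the square root then gives~\eqref{theorem1-eq3}. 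This optimisation over $\mu_i$ is the missing step.

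Minor: the paper's $\bar x_t$ is $\tfrac1n\sum_j\hat x_{j,t}$, not $\tfrac1n\sum_j x_{j,t}$; the consensus lemma is stated for $\hat x_{i,t}-\bar x_t$, and the extra $\|\hat x_{i,t}-x_{i,t}\|\le\tau_t$ is added on top.
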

\begin{proof}
The proof is given in Appendix B.
\end{proof}

\begin{remark}\label{rem1}
Sublinear dynamic network regret bound \eqref{theorem1-eq2} and network cumulative constraint violation bound \eqref{theorem1-eq3} would be established if the path-length of the comparator sequence grows sublinearly and the event-triggering threshold $\tau_t$ converges to zero, i.e., $\sum\nolimits_{k = 1}^t {{\tau _k}}$ grows sublinearly.
The bounds provided in \eqref{theorem1-eq2} and \eqref{theorem1-eq3} characterize the impact of event-triggered threshold $\tau_t$ on dynamic network regret and network cumulative constraint violation through ${\Psi_T}$.
The larger the event-triggering threshold $\tau_t$, the larger the static part of the bound \eqref{theorem1-eq2} (i.e., ${T^\kappa } + \sqrt {{\Psi_T} T}$) and the bound \eqref{theorem1-eq3}, and the smaller the dynamic part (i.e., $\sqrt {{{\Psi_T} ^{ - 1}}T} {P_T}$) of the bound \eqref{theorem1-eq2}.
The dynamic network regret bound \eqref{theorem1-eq2} and network cumulative constraint violation bound \eqref{theorem1-eq3} are the same as the results achieved by the distributed event-triggered online algorithm with full-information feedback in \cite{Zhang2024}.
When event-triggered communication is not considered, i.e., ${\tau _1} = 1$, ${\tau _t} = 0$ for $t \in [ {2,T} ]$, these bounds recover the results achieved by the centralized online algorithm with two-point bandit feedback in \cite{Cao2019}.
When inequality constraints are not considered, i.e., given any $x \in \mathbb{X}$, ${g_{i,t}}( x ) \equiv \mathbf{0}_{m_i}$ for $i \in [ n ]$ and $t \in [ T ]$, the dynamic network regret bound \eqref{theorem1-eq2} would recover the results achieved by the distributed event-triggered online algorithms with two-point bandit feedback in \cite{Cao2021, Xiong2023} if the path-length of the comparator sequence is zero for any~$T$, i.e., ${P_T} \equiv 0$ for any $T$.
When both event-triggered communication and inequality constraints are not considered, the dynamic network regret bound \eqref{theorem1-eq2} would recover the results achieved by the centralized online algorithm with two-point bandit feedback in \cite{Agarwal2010} and the distributed online algorithm with two-point bandit feedback in \cite{Yuan2021} if the path-length of the comparator sequence is zero for any~$T$.
Note that we analyze dynamic regret, while the authors of \cite{Agarwal2010, Yuan2021, Cao2021, Xiong2023} analyze static regret. However, When event-triggered communication is not considered, the network cumulative constraint violation bound \eqref{theorem1-eq3} is larger than that achieved by the centralized online algorithm with two-point bandit feedback in \cite{Chen2019}. This is reasonable since the authors of \cite{Chen2019} consider Slater’s condition for constraint functions (i.e., there is a point that strictly satisfies inequality constraints), which is a sufficient condition for strong duality to hold \cite{Boyd2004}, moreover, the algorithm in \cite{Chen2019} uses full-information feedback for constraint functions. In addition, the dynamic network regret bound \eqref{theorem1-eq2} and network cumulative constraint violation bound \eqref{theorem1-eq3} do not recover the results achieved by the distributed online algorithm with two-point bandit feedback in \cite{Yi2023} since the step-size $\alpha_t$ of the local primal variable is only a special case of that in \cite{Yi2023}. If choosing ${\alpha _t} = 1/\sqrt t$ for the algorithm in \cite{Yi2023}, these bounds would recover the results achieved by the algorithm in \cite{Yi2023}.
\end{remark}

\begin{remark}\label{rem2}
The proof of Theorem~1 has substantial differences compared to that of Theorem~3 in~\cite{Yi2023}.
More specifically, in our Algorithm~1, the agents would not broadcast the current local decisions if the event-triggering condition is not satisfied.
The resulting local decision sequences are different with those produced by the distributed online algorithm without event-triggered communication in~\cite{Yi2023} although the updating rules are similar.
This critical difference leads to challenges in theoretical proof because we need to reanalyse the local decisions at each iteration.
To tackle these challenges, we analyse the difference between the produced local decision ${x_{i,t}}$ and the stored local decision ${{\hat x}_{i,t}}$ for running distributed consensus protocol at each iteration for agent~$i$, $i \in [ n ]$ in the proof of Lemma~4. The analysis shows that the norm of the difference can be bounded by the current event-triggering threshold regardless of whether the event-triggering condition is satisfied or not.
The dynamic network regret bound \eqref{theorem1-eq2} and network cumulative constraint violation bound \eqref{theorem1-eq3} are established in this way, and are thus subject to event-triggering threshold.
In addition, the proof of Theorem~1 has significant differences compared to that of Theorem~1 in \cite{Zhang2024}.
Note that the subgradients of local loss and constraint functions are unavailable in bandit setting.
Our Algorithm~1 uses two-point stochastic subgradient estimators for the subgradients.
However, there exist gaps between the estimators and the real subgradients as the estimators are unbiased subgradients of the uniformly smoothed versions of local loss and constraint functions.
By utilizing the property of local loss and constraint functions, e.g., boundedness, convexity and Lipschitz continuity, the relationship between the smooth functions and their original functions can be established, see Lemma~1. Based on Lemma~1, we reanalyse dynamic network regret and network cumulative constraint violation bounds.
\end{remark}

Then, we consider two classes of explicit expressions for the event-triggering threshold $\tau_t$. Firstly, we select the event-triggering threshold sequence produced by ${\tau _t} = 1/{t^\theta }$, which is also adopted by the distributed online algorithms in \cite{Cao2021, Paul2022, Xiong2023, Oakamoto2023}. We establish dynamic network regret and network cumulative constraint violation bounds in the following corollary.
\begin{corollary}\label{cor1}
Under the same conditions as in Theorem 1 with ${\tau _t} = 1/{t^\theta }$ and $\theta  > 0$, for any $T \in {\mathbb{N}_ + }$ and any comparator sequence ${y_{[ T ]}} \in {\mathcal{X}_T}$, it holds that
\begin{flalign}
\mathbf{E}[{{\rm{Net}\mbox{-}\rm{Reg}}( {\{ {{x_{i,t}}} \},{y_{[ T ]}}} )}]
&= \left\{ \begin{array}{l}
\mathcal{O}( {{T^{\max \{ {\kappa ,1 - \theta /2} \}}} + {T^{\theta /2}}{P_T}} ), \;\;\;\;\;\;\;\;\;\;\;\mathrm{if} \; 0 < \theta < 1, \\
\mathcal{O}\big( {{T^\kappa } + \sqrt {T\log ( T )}  + \sqrt {\frac{T}{{\log ( T )}}} {P_T}} \big), \mathrm{if} \; \theta = 1, \\
\mathcal{O}( {{T^{\max \{ {\kappa ,1/2} \}}} + {\sqrt T }{P_T}} ), \;\;\;\;\;\;\;\;\;\;\;\;\;\;\;\:\mathrm{if} \; \theta > 1,
\end{array} \right. \label{corollary1-eq1} \\
\mathbf{E}[{{\rm{Net} \mbox{-} \rm{CCV}}( {\{ {{x_{i,t}}} \}} )}]
&= \left\{ \begin{array}{l}
\mathcal{O}( {{T^{\max \{ {1 - \kappa /2,1 - \theta /4} \}}}} ), \;\;\;\;\;\;\;\;\;\;\;\;\;\;\;\;\;\;\;\;\;\mathrm{if} \; 0 < \theta < 1, \\
\mathcal{O}( {{T^{1 - \kappa /2}} + {T^{3/4}}\log {(T)^{1/4}}} ), \;\;\;\;\;\;\;\;\;\,\,\mathrm{if} \; \theta = 1, \\
\mathcal{O}( {{T^{\max \{ {1 - \kappa /2,3/4} \}}}} ), \;\;\;\;\;\;\;\;\;\;\;\;\;\;\;\;\;\;\;\;\;\;\;\:\,\mathrm{if} \; \theta > 1.
\end{array} \right. \label{corollary1-eq2}
\end{flalign}
\end{corollary}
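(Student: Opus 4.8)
The plan is to derive Corollary~\ref{cor1} directly from Theorem~\ref{thm3} by specialization. First I would observe that $\tau_t = 1/t^\theta$ with $\theta > 0$ is a positive, non-increasing sequence, hence an admissible event-triggering threshold sequence for Algorithm~1; together with the remaining parameter sequences $\alpha_t,\beta_t,\gamma_t,\xi_t,\delta_t$ chosen exactly as in~\eqref{theorem1-eq1}, all hypotheses of Theorem~\ref{thm3} are satisfied. Therefore the bounds~\eqref{theorem1-eq2} and~\eqref{theorem1-eq3} apply verbatim, and it only remains to evaluate $\Psi_T = \sum_{k=1}^{T}\tau_k = \sum_{k=1}^{T} k^{-\theta}$ asymptotically and substitute.

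The one estimate genuinely needed is a \emph{two-sided} ($\Theta$) bound on $\Psi_T$: the upper bound controls the ``static'' terms $\sqrt{\Psi_T T}$ in~\eqref{theorem1-eq2} and $\Psi_T^{1/4}T^{3/4}$ in~\eqref{theorem1-eq3}, while a matching lower bound is what makes $\sqrt{\Psi_T^{-1}T}\,P_T$ bounded by the claimed order. Both follow from the standard integral comparison
\begin{flalign}
\int_1^{T+1} x^{-\theta}\,dx \;\le\; \Psi_T \;\le\; 1 + \int_1^{T} x^{-\theta}\,dx, \nonumber
\end{flalign}
which gives $\Psi_T = \Theta(T^{1-\theta})$ for $0<\theta<1$, $\Psi_T = \Theta(\log T)$ for $\theta = 1$, and $\Psi_T = \Theta(1)$ (indeed $\Psi_T \to \zeta(\theta)$) for $\theta > 1$. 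Substituting case by case: for $0<\theta<1$ one has $\sqrt{\Psi_T T} = \Theta(T^{1-\theta/2})$, $\sqrt{\Psi_T^{-1}T} = \Theta(T^{\theta/2})$, and $\Psi_T^{1/4}T^{3/4} = \Theta(T^{1-\theta/4})$, yielding $\mathcal{O}(T^{\max\{\kappa,\,1-\theta/2\}} + T^{\theta/2}P_T)$ and $\mathcal{O}(T^{\max\{1-\kappa/2,\,1-\theta/4\}})$; for $\theta = 1$ one gets $\sqrt{\Psi_T T} = \Theta(\sqrt{T\log T})$, $\sqrt{\Psi_T^{-1}T} = \Theta(\sqrt{T/\log T})$, $\Psi_T^{1/4}T^{3/4} = \Theta(T^{3/4}(\log T)^{1/4})$; and for $\theta > 1$ every $\Psi_T$-factor is $\Theta(1)$, so $\sqrt{\Psi_T T} = \Theta(\sqrt T)$, $\sqrt{\Psi_T^{-1}T} = \Theta(\sqrt T)$, $\Psi_T^{1/4}T^{3/4} = \Theta(T^{3/4})$, giving $\mathcal{O}(T^{\max\{\kappa,\,1/2\}} + \sqrt{T}P_T)$ and $\mathcal{O}(T^{\max\{1-\kappa/2,\,3/4\}})$.

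There is essentially no obstacle beyond careful bookkeeping: the only point requiring a moment's care is to reciprocate the lower bound on $\Psi_T$ (not the upper) when estimating the dynamic term $\sqrt{\Psi_T^{-1}T}$, and then to collect the dominant exponent inside each $\max\{\cdot,\cdot\}$. Matching the resulting expressions with~\eqref{corollary1-eq1} and~\eqref{corollary1-eq2} completes the proof.
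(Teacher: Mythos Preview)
Your proposal is correct and follows exactly the route the paper (implicitly) takes: the paper does not spell out a separate proof for Corollary~\ref{cor1}, treating it as an immediate specialization of Theorem~\ref{thm3} once $\Psi_T=\sum_{k=1}^T k^{-\theta}$ is estimated via the standard integral comparison. Your observation that a two-sided $\Theta$-bound on $\Psi_T$ is needed (because $\Psi_T^{-1}$ appears in the dynamic term) is the only nontrivial bookkeeping point, and you handle it correctly.
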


\begin{remark}\label{rem3}
The dynamic network regret bound \eqref{corollary1-eq1} and network cumulative constraint violation bound \eqref{corollary1-eq2} would be sublinear if the path-length ${P_T}$ grows sublinearly. Moreover, the larger $\theta$, the smaller the static part of the bound \eqref{corollary1-eq1} and the bound \eqref{corollary1-eq2}, and the larger the dynamic part of the bound \eqref{corollary1-eq1}. When $\theta  > 1$, these bounds recover the results achieved by the centralized online algorithm with two-point bandit feedback in~\cite{Cao2019}. In addition, when inequality constraints are not considered, the dynamic network regret bound~\eqref{corollary1-eq1} would recover the results achieved by the centralized online algorithm with two-point bandit feedback in \cite{Agarwal2010}, the distributed online algorithm with two-point bandit feedback in \cite{Yuan2021}, and the distributed event-triggered online algorithm with two-point bandit feedback in \cite{Cao2021, Xiong2023} if the path-length of the comparator sequence is zero for any~$T$.
In addition, we consider time-varying inequality constraints, while the authors of \cite{Agarwal2010, Cao2021, Xiong2023} do not consider inequality constraints.
\end{remark}

Secondly, we choose the event-triggering threshold sequence produced by ${\tau _t} =1/{c^t}$, which is also adopted in distributed optimization with event-triggered communication, see, e.g., \cite{Seyboth2013, Yang2016b, Ding2017, Ge2020, Yang2022}. We establish dynamic network regret and network cumulative constraint violation bounds in the following corollary.
\begin{corollary}\label{cor2}
Under the same conditions as in Theorem 1 with ${\tau _t} = 1/{c^t}$ and $c  > 1$, for any $T \in {\mathbb{N}_ + }$ and any comparator sequence ${y_{[ T ]}} \in {\mathcal{X}_T}$, it holds that
\begin{flalign}
\mathbf{E}[{{\rm{Net}\mbox{-}\rm{Reg}}( {\{ {{x_{i,t}}} \},{y_{[ T ]}}} )}] &= \mathcal{O}( {{T^{\max \{ {\kappa ,1/2} \}}} + {\sqrt T }{P_T}} ), \label{corollary2-eq1}\\
\mathbf{E}[{{\rm{Net} \mbox{-} \rm{CCV}}( {\{ {{x_{i,t}}} \}} )}] &= \mathcal{O}( {{T^{\max \{ {1 - \kappa /2,3/4} \}}}} ). \label{corollary2-eq2}
\end{flalign}
\end{corollary}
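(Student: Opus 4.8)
The plan is to derive Corollary~\ref{cor2} as a direct specialization of Theorem~\ref{thm3} by evaluating the quantity $\Psi_T = \sum_{k=1}^{T}\tau_k$ for the geometric threshold $\tau_t = 1/c^t$ with $c>1$, and then substituting the resulting order into the bounds \eqref{theorem1-eq2} and \eqref{theorem1-eq3}. First I would observe that the assumptions of Theorem~\ref{thm3} are met: $\{\tau_t\} = \{1/c^t\}$ is a non-increasing positive sequence, so all the parameter choices in \eqref{theorem1-eq1} are well defined, and in particular $\alpha_t = \sqrt{\Psi_t/t}$ makes sense since $\Psi_t>0$ for all $t$. Hence \eqref{theorem1-eq2} and \eqref{theorem1-eq3} hold verbatim, and only the arithmetic of $\Psi_T$ remains.

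Next I would bound $\Psi_T$. Since $c>1$, the geometric series converges: $\Psi_T = \sum_{k=1}^{T} c^{-k} = \frac{c^{-1}(1-c^{-T})}{1-c^{-1}} \le \frac{1}{c-1}$, and also $\Psi_T \ge \tau_1 = 1/c > 0$. Therefore $\Psi_T = \Theta(1)$, i.e. there exist positive constants $c_1, c_2$ (depending only on $c$) with $c_1 \le \Psi_T \le c_2$ for all $T$. Substituting this into \eqref{theorem1-eq2}: the static term $T^\kappa$ is unchanged, the term $\sqrt{\Psi_T T} = \Theta(\sqrt T)$, and the dynamic term $\sqrt{\Psi_T^{-1} T}\,P_T = \Theta(\sqrt T\, P_T)$. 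Combining the two static contributions gives $T^{\max\{\kappa,1/2\}}$, which yields \eqref{corollary2-eq1}. Similarly, in \eqref{theorem1-eq3} the term $\Psi_T^{1/4} T^{3/4} = \Theta(T^{3/4})$, so together with $T^{1-\kappa/2}$ we get $\mathcal{O}(T^{\max\{1-\kappa/2,\,3/4\}})$, which is \eqref{corollary2-eq2}.

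There is essentially no obstacle here; the only thing to be careful about is confirming that $\Psi_T$ is bounded away from zero as well as bounded above, so that both $\sqrt{\Psi_T}$ and $\sqrt{\Psi_T^{-1}}$ are $\Theta(1)$ and the substitution into the dynamic term of \eqref{theorem1-eq2} is legitimate — the lower bound $\Psi_T \ge 1/c$ takes care of this. One should also note that the hidden constants in the $\mathcal{O}$-notation of \eqref{corollary2-eq1}–\eqref{corollary2-eq2} now depend on $c$ through $c_1$ and $c_2$, but this is harmless since $c$ is a fixed constant of the algorithm. The proof therefore consists of the two-line computation of $\Psi_T = \Theta(1)$ followed by direct substitution into Theorem~\ref{thm3}; I would present it in Appendix alongside the analogous (and only slightly more involved) computation for Corollary~\ref{cor1}, where instead one uses $\sum_{k=1}^T k^{-\theta} = \Theta(T^{1-\theta})$, $\Theta(\log T)$, or $\Theta(1)$ according as $\theta<1$, $\theta=1$, or $\theta>1$.
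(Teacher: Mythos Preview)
Your proposal is correct and matches the paper's approach: the paper states Corollary~\ref{cor2} without an explicit proof, treating it as an immediate consequence of Theorem~\ref{thm3} obtained by substituting the specific threshold sequence, and your computation $\Psi_T=\Theta(1)$ (with the lower bound $\Psi_T\ge 1/c$ and upper bound $\Psi_T\le 1/(c-1)$) followed by direct plug-in into \eqref{theorem1-eq2}--\eqref{theorem1-eq3} is exactly the intended derivation.
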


\begin{remark}\label{rem4}
The dynamic network regret bound \eqref{corollary2-eq1} and network cumulative constraint violation bound \eqref{corollary2-eq2} recover the results in Corollary 1 with $\theta  > 1$ and achieved by the centralized online algorithm with two-point bandit feedback in \cite{Cao2019}. Moreover, when inequality constraints are not considered, the dynamic network regret bound \eqref{corollary2-eq1} recovers the results achieved by the centralized online algorithm with two-point bandit feedback in \cite{Agarwal2010}, the distributed online algorithm with two-point bandit feedback in \cite{Yuan2021}, and the distributed event-triggered online algorithm with two-point bandit feedback in \cite{Cao2021, Xiong2023} if the path-length of the comparator sequence is zero for any~$T$.
\end{remark}

Note that the event-triggering threshold $\tau_t$ in \eqref{theorem1-eq1} affects the step-size $\alpha_t$ of the local primal variable. To avoid that, we next show how to independently design $\alpha_t$ in the following theorem.

\begin{theorem}\label{thm4}
Suppose Assumptions 1 and 2 hold. Let $\{ {{x_{i,t}}} \}$ be the sequences generated by Algorithm~1 with
\begin{flalign}
{\alpha _t} = \frac{{{\alpha _0}}}{{{t^{{\theta _1}}}}}, {\beta _t} = \frac{1}{{{t^{{\theta _2}}}}}, {\gamma _t} = \frac{1}{{{t^{1 - {\theta _2}}}}},
{\xi _t} = \frac{1}{{t + 1}}, {\delta _t} = \frac{{r( \mathbb{X} )}}{{t + 1}}, {\tau _t} = \frac{{{\tau _0}}}{{{t^{{\theta _3}}}}}, \forall t \in {\mathbb{N}_ + }, \label{theorem2-eq1}
\end{flalign}
where ${\alpha _0}$, ${\theta _1} \in ( {0,1} )$, ${\theta _2}  \in ( {0,1} )$, and ${\theta _3}$ are positive constants, and  ${\tau _0}$ is a non-negative constant. Then, for any $T \in {\mathbb{N}_ + }$ and any comparator sequence ${y_{[ T ]}} \in {\mathcal{X}_T}$,
\begin{flalign}
\mathbf{E}[{{\rm{Net}\mbox{-}\rm{Reg}}( {\{ {{x_{i,t}}} \},{y_{[ T ]}}} )}]
&= \left\{ \begin{array}{l}
\mathcal{O}\big( {\alpha _0}{T^{1 - {\theta _1}}} + {T^{{\theta _2}}} + \frac{{{\tau _0}}}{{{\alpha _0}}}{T^{1 + {\theta _1} - {\theta _3}}}\\
\;\;\;\;\;\;\;\;\;\;\;\;\;\;\;\;\;\;\; + \frac{{{T^{{\theta _1}}}( {1 + {P_T}} )}}{{{\alpha _0}}} \big), \;\;\;\;\;\;\mathrm{if} \; {\theta _1} < {\theta _3} < 1 + {\theta _1},\\
\mathcal{O}\big( {\alpha _0}{T^{1 - {\theta _1}}} + {T^{{\theta _2}}} + \frac{{{\tau _0}}}{{{\alpha _0}}}\log ( T ) \\
\;\;\;\;\;\;\;\;\;\;\;\;\;\;\;\;\;\;\; + \frac{{{T^{{\theta _1}}}( {1 + {P_T}} )}}{{{\alpha _0}}} \big),\;\;\;\;\;\;\mathrm{if} \; {\theta _3} = 1 + {\theta _1},\\
\mathcal{O}\big( {\alpha _0}{T^{1 - {\theta _1}}} + {T^{{\theta _2}}} + \frac{{{\tau _0}}}{{{\alpha _0}}} \\
\;\;\;\;\;\;\;\;\;\;\;\;\;\;\;\;\;\;\; + \frac{{{T^{{\theta _1}}}( {1 + {P_T}} )}}{{{\alpha _0}}} \big),\;\;\;\;\;\;\mathrm{if} \; {\theta _3} > 1 + {\theta _1},
\end{array} \right. \label{theorem2-eq2} \\
\mathbf{E}[{{\rm{Net} \mbox{-} \rm{CCV}}( {\{ {{x_{i,t}}} \}} )}]
&=\left\{ \begin{array}{l}
\mathcal{O}( \sqrt {{\alpha _0}} {T^{1 - {\theta _1}/2}} + {T^{1 - {\theta _2}/2}}\\
\;\;\;\;\;\;\;\;\;\;\;\;\;\;\;\;\;\;\; + \sqrt {{\tau _0}} {T^{1 - {\theta _3}/2}} ), \;\;\,\, \mathrm{if} \; {\theta _1} < {\theta _3} < 1,\\
\mathcal{O}\big( \sqrt {{\alpha _0}} {T^{1 - {\theta _1}/2}} + {T^{1 - {\theta _2}/2}}\\
\;\;\;\;\;\;\;\;\;\;\;\;\;\;\;\;\;\;\; + \sqrt {{\tau _0}T\log ( T )} \big),\,\mathrm{if} \; {\theta _3} = 1,\\
\mathcal{O}( \sqrt {{\alpha _0}} {T^{1 - {\theta _1}/2}} + {T^{1 - {\theta _2}/2}} \\
\;\;\;\;\;\;\;\;\;\;\;\;\;\;\;\;\;\;\; + \sqrt {{\tau _0 }{T}} ), \;\;\;\;\;\;\;\;\;\;\;\,\, \mathrm{if} \; {\theta _3} > 1.
\end{array} \right. \label{theorem2-eq3}
\end{flalign}
\end{theorem}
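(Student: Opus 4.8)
The plan is to mirror the proof of Theorem~1 but keep the three step-size and threshold exponents $\theta_1,\theta_2,\theta_3$ as free parameters rather than tying $\alpha_t$ to $\Psi_t$. The backbone is the same master inequality that underlies Theorem~1: combine the consensus-error bound from Lemma~4 (the key point being that $\|x_{i,t}-\hat x_{i,t}\|$ is controlled by $\tau_t$ regardless of whether an event triggers), the primal--dual drift analysis using the regret-style telescoping of $\|x_{i,t+1}-y_t\|^2$ and the dual-variable recursion for $\|q_{i,t}\|^2$, and the bandit correction terms from Lemma~1 that quantify the gap between the smoothed functions $\hat f_{i,t},\widehat{[g_{i,t}]_+}$ and the originals (these contribute $\mathcal{O}(\delta_t)=\mathcal{O}(1/t)$ per-iteration errors plus the shrinkage error from projecting onto $(1-\xi_t)\mathbb{X}$, which is also $\mathcal{O}(\xi_t)=\mathcal{O}(1/t)$). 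First I would assemble, for a generic non-increasing $\alpha_t,\beta_t,\gamma_t$ and threshold $\tau_t$, the expectation bound of the form
\begin{flalign*}
\mathbf{E}[\mathrm{Net\text{-}Reg}] = \mathcal{O}\!\Big(\sum_{t=1}^T\alpha_t F_2^2 + \frac{1+P_T}{\alpha_T} + \sum_{t=1}^T\frac{\tau_t}{\alpha_t} + \sum_{t=1}^T(\delta_t+\xi_t) + \sum_{t=1}^T\beta_t\gamma_t\|q\|^2\text{-terms}\Big),
\end{flalign*}
together with the companion bound for $\mathbf{E}[\mathrm{Net\text{-}CCV}]$ in terms of $\sqrt{\sum_t \gamma_t^{-1}(\cdot)}$ obtained from the dual recursion and the virtual-queue argument.

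**Next I would substitute** the explicit choices \eqref{theorem2-eq1}. With $\alpha_t=\alpha_0 t^{-\theta_1}$, $\sum_{t=1}^T\alpha_t=\mathcal{O}(\alpha_0 T^{1-\theta_1})$ and $1/\alpha_T=\mathcal{O}(\alpha_0^{-1}T^{\theta_1})$, giving the first and last terms of \eqref{theorem2-eq2}. With $\beta_t=t^{-\theta_2},\gamma_t=t^{-(1-\theta_2)}$ one has $\beta_t\gamma_t=1/t$, so the harmonic-sum pieces collapse to $\mathcal{O}(\log T)$ or are dominated, and the regularization term produces the $\mathcal{O}(T^{\theta_2})$ contribution; the $\delta_t,\xi_t$ sums are $\mathcal{O}(\log T)$ and absorbed. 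The genuinely new ingredient is $\sum_{t=1}^T \tau_t/\alpha_t = (\tau_0/\alpha_0)\sum_{t=1}^T t^{\theta_1-\theta_3}$, and this is exactly the source of the three-way case split: it is $\mathcal{O}((\tau_0/\alpha_0)T^{1+\theta_1-\theta_3})$ when $\theta_1-\theta_3>-1$, i.e. $\theta_3<1+\theta_1$; $\mathcal{O}((\tau_0/\alpha_0)\log T)$ when $\theta_3=1+\theta_1$; and $\mathcal{O}(\tau_0/\alpha_0)$ (a convergent series) when $\theta_3>1+\theta_1$. Plugging these into the master bound yields \eqref{theorem2-eq2} verbatim. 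For the CCV bound \eqref{theorem2-eq3}, the dual analysis gives $\mathbf{E}[\mathrm{Net\text{-}CCV}] = \mathcal{O}\big(\sqrt{T\cdot(\text{regret-type terms})}\big)$ roughly, so each additive term $T^{a}$ inside becomes $T^{(1+a)/2}$: $\alpha_0 T^{1-\theta_1}\mapsto\sqrt{\alpha_0}T^{1-\theta_1/2}$, the $T^{\theta_2}$-type term from $\gamma_t^{-1}=t^{1-\theta_2}$ summed gives $T^{2-\theta_2}$ inside the root, i.e. $T^{1-\theta_2/2}$, and $(\tau_0/\alpha_0)T^{1+\theta_1-\theta_3}\cdot(\ldots)$ reduces, after the $\alpha$-weighting cancels, to $\sqrt{\tau_0}T^{1-\theta_3/2}$ with the analogous $\log$ and constant cases at $\theta_3=1$ and $\theta_3>1$. (Note the CCV case split hinges on $\theta_3$ versus $1$, not $1+\theta_1$, because the weighting in the dual chain differs by one power of $\alpha_t$ from the primal chain.)

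**The main obstacle** I anticipate is bookkeeping the coupling between the primal step-size $\alpha_t$ and the event-triggering error in a way that produces the \emph{clean} $\sum_t \tau_t/\alpha_t$ term with no hidden dependence. Concretely, the term $\sqrt{\Psi_T/T}$ in Theorem~1 arose precisely because $\alpha_t$ was chosen to balance $\sum\alpha_t$ against $\sum\tau_t/\alpha_t$; here, with $\alpha_t$ decoupled, one must verify that the consensus-error terms in Lemma~4 — which involve $\sum_{t}\alpha_t\|\text{consensus error}\|$ and the geometric mixing factor from Assumption~2(iii) with parameter $B$ — still telescope correctly and that the $\tau_t$-dependence enters only through $\tau_t/\alpha_t$ and the standalone $\sqrt{\tau_0}$-type tail in the CCV bound, rather than, say, through an uncontrolled $\sum_t \tau_t \cdot t^{\theta_1}$ cross term. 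Once the master inequality is established with the right weighting on each summand, the rest is routine estimation of power sums $\sum_{t=1}^T t^{s}$ via the standard $\mathcal{O}(T^{s+1})$ / $\mathcal{O}(\log T)$ / $\mathcal{O}(1)$ trichotomy, together with the elementary inequality $\sum_t a_t \le \sqrt{T\sum_t a_t^2}$ (or Jensen) to pass from the primal to the CCV bound. I would also double-check the edge constraints $\theta_1,\theta_2\in(0,1)$ ensure all the "dominant" terms named in \eqref{theorem2-eq2}--\eqref{theorem2-eq3} genuinely dominate the absorbed $\log T$ and $\mathcal{O}(1)$ pieces, so that no stray logarithmic factor needs to be displayed.
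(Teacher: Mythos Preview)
Your proposal is correct and follows essentially the same route as the paper: derive the master inequalities (this is exactly the content of Lemma~5, assembled from Lemmas~1, 3, and~4 as you describe), then substitute the power-law choices \eqref{theorem2-eq1} and evaluate each sum by the standard trichotomy $\sum_{t=1}^T t^{-s}=\mathcal{O}(T^{1-s}),\ \mathcal{O}(\log T),\ \mathcal{O}(1)$ according as $s<1$, $s=1$, $s>1$. Your identification of $\sum_t \tau_{t+1}/\alpha_{t+1}$ as the source of the regret split at $\theta_3=1+\theta_1$, and of the direct term $T\sum_t\tau_t$ (together with the $\alpha_0$-cancelling product $(\sum_t\alpha_t)(\sum_t\tau_t/\alpha_t)$) as the source of the CCV split at $\theta_3=1$, matches the paper's computations in Appendix~C precisely; the ``main obstacle'' you flag is already handled by Lemma~5, whose bound \eqref{lemma5-eq1} indeed contains the clean term $2R(\mathbb{X})\sum_t \tau_{t+1}/\alpha_{t+1}$ with no hidden cross coupling.
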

\begin{proof}
The proof is given in Appendix C.
\end{proof}

\begin{remark}\label{rem5}
The dynamic network regret bound \eqref{theorem2-eq2} and network cumulative constraint violation bound \eqref{theorem2-eq3} are the same as the results achieved by the distributed event-triggered online algorithm with full-information feedback in \cite{Zhang2024}, that is, in an average sense, our Algorithm~1 is as efficient as its full-information feedback version. When event-triggered communication is not considered, i.e., ${\tau _0} = 0$, these bounds would recover the results achieved by the distributed online algorithm with two-point bandit feedback in \cite{Yi2023} if ${\theta _1} = {\theta _2}$.
\end{remark}

\begin{remark}\label{rem6}
By replacing the comparator sequence ${y_{[ T ]}}$ with the offline optimal static decision sequence $\hat x_{[ T ]}^*$, we have ${P_T} \equiv 0$ for any $T$, and then the static network regret and cumulative constraint violation bounds for Algorithm~1 with corresponding parameter and event-triggered threshold sequences can be easily established based on the results in Theorems~1, ~2, Corollaries~1 and~2, respectively.
These bounds are the same as \eqref{theorem1-eq2}--\eqref{corollary2-eq2}, \eqref{theorem2-eq2}, and \eqref{theorem2-eq3} with ${P_T} = 0$, respectively.
When inequality constraints are not considered, these static network regret bounds recover the results achieved by the distributed event-triggered online algorithms with two-point bandit feedback in \cite{Cao2021, Xiong2023}.
When both event-triggered communication and inequality constraints are not considered, these static network regret bounds recover the results achieved by the centralized online algorithm with two-point bandit feedback in \cite{Agarwal2010} and the distributed online algorithm with two-point bandit feedback in \cite{Yuan2021}.
\end{remark}

\section{NUMERICAL EXAMPLE}
To evaluate the performance of Algorithm~1, we consider a distributed online linear regression problem with time-varying linear inequality constraints over a network of $n$ agents.
At iteration~$t$, the local loss and constraint functions are  ${f_{i,t}}( x ) = \frac{1}{2}{( {{A_{i,t}}x - {\vartheta _{i,t}}} )^2}$ and ${g_{i,t}}( x ) = {B_{i,t}}x - {b_{i,t}}$, respectively, where each component of ${A_{i,t}} \in {\mathbb{R}^{{q_i} \times p}}$ is randomly generated from the uniform distribution in the interval $[ { - 1,1} ]$, ${\vartheta _{i,t}} = {A_{i,t}}{\mathbf{1}_p} + {\zeta _{i,t}}$ with ${\vartheta _{i,t}} \in {\mathbb{R}^{{q_i}}}$ and ${\zeta _{i,t}}$ being a standard normal random vector, and each component of ${B_{i,t}} \in {\mathbb{R}^{{m_i} \times p}}$ and ${b_{i,t}} \in {\mathbb{R}^{{m_i}}}$ is randomly generated from the uniform distribution in the interval $[ {0,2} ]$ and $[ {0,1} ]$, respectively.
We set $n = 100$, ${q_i} = 4$, $p = 10$, ${m_i} = 2$, $\mathbb{X} = {[ { - 5,5} ]^p}$.
We use a time-varying undirected graph to model the communication topology.
Specifically, at each iteration~$t$, the graph is first randomly generated where the probability of any two agents being connected is $0.1$. Then, to make sure that Assumption~2 is satisfied, we add edges~$( {i,i + 1} )$ for $i = 1, \cdot  \cdot  \cdot ,24$ when $t \in \{ {4c + 1} \}$, edges~$( {i,i + 1} )$ for $i = 25, \cdot  \cdot  \cdot ,49$ when $t \in \{ {4c + 2} \}$, edges~$( {i,i + 1} )$ for $i = 50, \cdot  \cdot  \cdot ,74$ when $t \in \{ {4c + 3} \}$, edges~$( {i,i + 1} )$ for $i = 75, \cdot  \cdot  \cdot ,99$ when $t \in \{ {4c + 4} \}$ with $c$ being a nonnegative integer. Moreover, let
${[ {{W_t}} ]_{ij}} = \frac{1}{n}$ if $( {j,i} ) \in {\mathcal{E}_t}$ and ${[ {{W_t}} ]_{ii}} = 1 - \sum\nolimits_{j = 1}^n {{{[ {{W_t}} ]}_{ij}}} $.

\begin{figure}[!ht]
 \centering
  \includegraphics[width=8cm]{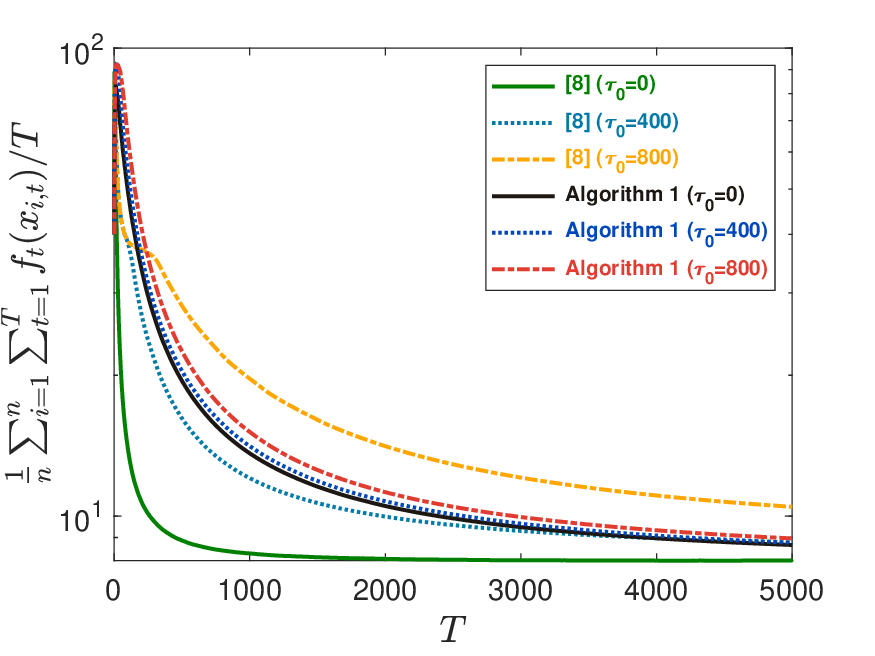}
  \caption{Evolutions of $\frac{1}{n}\sum\nolimits_{i = 1}^n {\sum\nolimits_{t = 1}^T {{f_t}( {{x_{i,t}}} )} } /T$.}
\end{figure}

\begin{figure}[!ht]
 \centering
  \includegraphics[width=8cm]{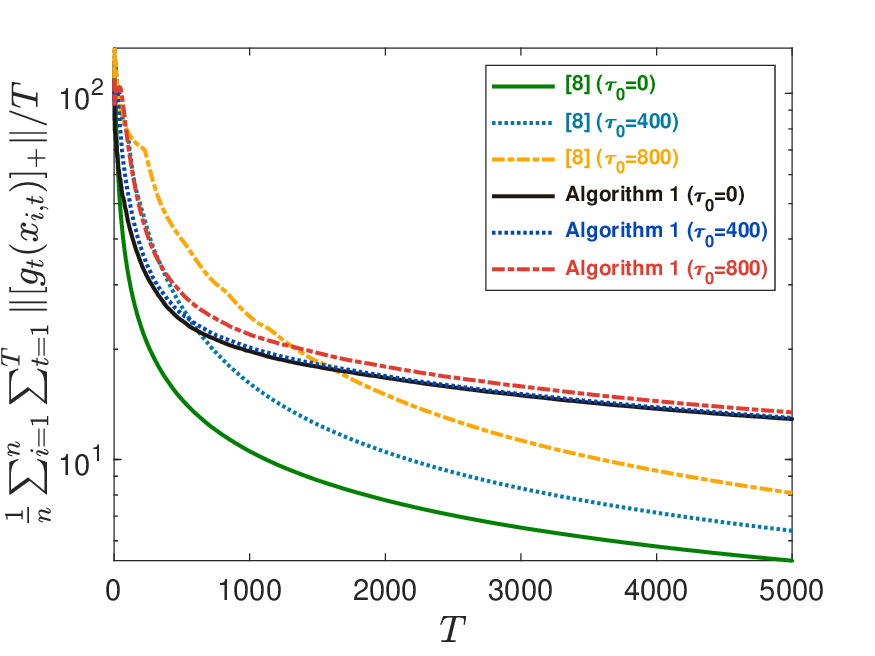}
  \caption{Evolutions of $\frac{1}{n}\sum\nolimits_{i = 1}^n {\sum\nolimits_{t = 1}^T {\| {{{[ {{g_t}( {{x_{i,t}}} )} ]}_ + }} \|} } /T$.}
\end{figure}

\begin{figure}[!ht]
 \centering
  \includegraphics[width=8cm]{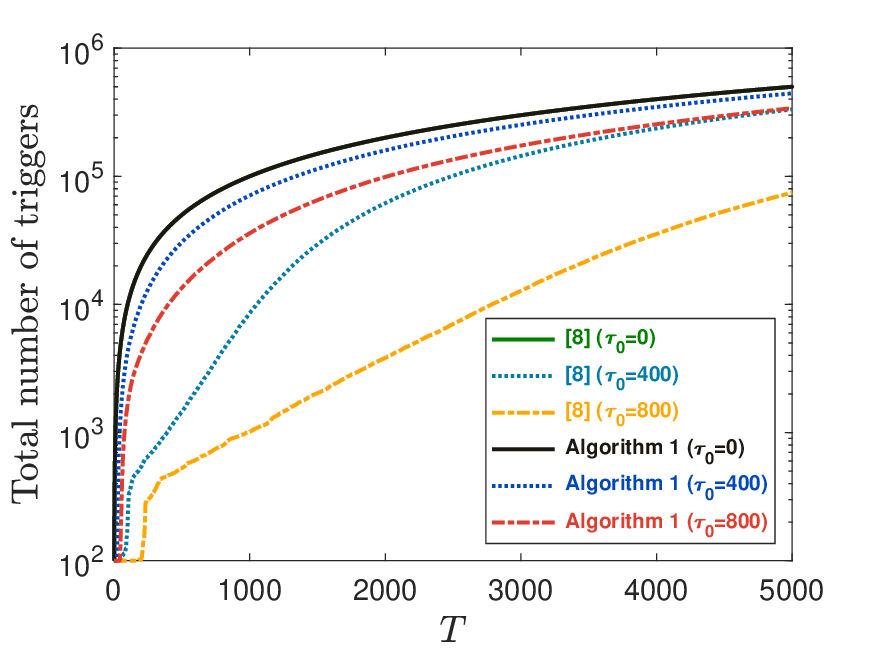}
  \caption{Evolutions of total number of triggers.}
\end{figure}
Note that there are no other distributed event-triggered online algorithms with bandit feedback to solve the considered problem due to the time-varying constraints. We compare our Algorithm~1 with the distributed event-triggered online algorithm with full-information in \cite{Zhang2024}.

Set ${\alpha _t} = 1/\sqrt t $, ${\beta _t} = 1/\sqrt t$, ${\gamma _t} = 1/\sqrt t$, and ${\tau _t} = {\tau _0}/t$ for our Algorithm~1 and the distributed event-triggered online algorithm in \cite{Zhang2024}. To explore the impact of different event-triggering threshold sequences on network regret and cumulative constraint violation, we select ${\tau _0} = 0$, ${\tau _0} = 400$, and ${\tau _0} = 800$, respectively.
With different values of ${\tau _0}$, Figs.~2--4 illustrate the evolutions of the average cumulative loss $\frac{1}{n}\sum\nolimits_{i = 1}^n {\sum\nolimits_{t = 1}^T {{f_t}( {{x_{i,t}}} )} } /T$, the average cumulative constraint violation $\frac{1}{n}\sum\nolimits_{i = 1}^n {\sum\nolimits_{t = 1}^T {\| {{{[ {{g_t}( {{x_{i,t}}} )} ]}_ + }} \|} } /T$ and total number of triggers, respectively.
The results show that as ${\tau _0}$ increases, the average cumulative loss and the average cumulative constraint violation increase, while the total number of triggers decreases, which are consistent with Theorem~2. In addition, Figs.~2 and~3 also show that our proposed algorithm has larger average cumulative loss and constraint violation than the algorithm in \cite{Zhang2024} under the same ${\tau _0}$. However, the disadvantages gradually weaken as ${\tau _0}$ increases, and our Algorithm~1 has smaller average cumulative loss when ${\tau _0} = 800$.

\section{CONCLUSIONS}
This paper considered the distributed bandit convex optimization problem with time-varying inequality constraints.
To better utilize communication resources, we proposed the distributed event-triggered online primal--dual algorithm with two-point bandit feedback.
We analyzed the network regret and cumulative constraint violation for the proposed algorithm under several classes of appropriately chosen decreasing
parameter sequences and non-increasing event-triggered
threshold sequences.
Our theoretical results were comparable to the results achieved by distributed event-triggered online algorithms with full-information feedback.
In brief, this paper broadened the applicability of distributed event-triggered online convex optimization to the regime with time-varying constraints.

The future direction is to investigate new distributed online algorithms with two-point bandit feedback such that the network regret bound can be reduced under the strongly convex condition and the network cumulative constraint violation bound can be reduced when the global constraint function satisfies Slater’s condition. Moreover, we will also investigate compressed communication to reduce communication overhead at each iteration.

\appendix

\hspace{-3mm}\emph{A. Useful Lemmas}

Some preliminary results are given in this section.
\begin{lemma}\label{lem1}
(Lemma 8 in \cite{Yi2023}) If Assumption~1 holds. Then, ${{\hat f}_{i,t}}( x )$ and ${[ {{{\hat g}_{i,t}}( x )} ]_ + }$ are convex on $( {1 - {\xi _t}} )\mathbb{X}$, and for any $i \in [ n ]$, $t \in {\mathbb{N}_ + }$, $x \in ( {1 - {\xi _t}} )\mathbb{X}$, $q \in \mathbb{R}_ + ^{{m_i}}$,
\begin{subequations}
\begin{flalign}
&\partial {{\hat f}_{i,t}}( x ) = {\mathbf{E}_{{\mathfrak{U}_t}}}[ {\hat \partial {f_{i,t}}( x )} ], \label{lemma1-eq1a}\\
&{f_{i,t}}\left( x \right) \le {{\hat f}_{i,t}}\left( x \right) \le {f_{i,t}}\left( x \right) + {F_2}{\delta _t}, \label{lemma1-eq1b}\\
&\| {\hat \partial {f_{i,t}}( x )} \| \le p{F_2}, \label{lemma1-eq1c}\\
&\partial {[ {{{\hat g}_{i,t}}( x )} ]_ + } = {\mathbf{E}_{{\mathfrak{U}_t}}}\big[ {\hat \partial {{[ {{g_{i,t}}( x )} ]}_ + }} \big], \label{lemma1-eq1d} \\
&{q^T}{[ {{g_{i,t}}( x )} ]_ + } \le {q^T}{[ {{{\hat g}_{i,t}}( x )} ]_ + }
\le {q^T}{[ {{g_{i,t}}( x )} ]_ + } + {F_2}{\delta _t}\| q \|, \label{lemma1-eq1e}\\
&\| {\hat \partial {{[ {{g_{i,t}}( x )} ]}_ + }} \| \le p{F_2}, \label{lemma1-eq1f}\\
&\| {{{[ {{{\hat g}_{i,t}}( x )} ]}_ + }} \| \le {F_1}, \label{lemma1-eq1g}
\end{flalign}
\end{subequations}
where ${{\hat f}_{i,t}}( x ) = {\mathbf{E}_{v \in {\mathbb{B}^p}}}[ {{f_{i,t}}( {x + {\delta _t}v} )} ]$ and ${[ {{{\hat g}_{i,t}}( x )} ]_ + } = {\mathbf{E}_{v \in {\mathbb{B}^p}}}\big[ {{{[ {{g_{i,t}}( {x + {\delta _t}v} )} ]}_ + }} \big]$ with $v$ being chosen uniformly at random, and ${\mathfrak{U}_t}$ is the $\sigma$-algebra induced by the independent and identically distributed variables ${u_{1,t}}, \cdot  \cdot  \cdot ,{u_{n,t}}$.
\end{lemma}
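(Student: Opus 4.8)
The plan is to establish Lemma~\ref{lem1} by exploiting the standard smoothing-via-ball-averaging construction due to Flaxman et al. and Agarwal et al., adapted to the present setting where the decision is restricted to the shrunken set $(1-\xi_t)\mathbb{X}$. Since this is quoted verbatim as ``Lemma 8 in \cite{Yi2023}'', the cleanest route is to derive each of \eqref{lemma1-eq1a}--\eqref{lemma1-eq1g} directly from the definitions of $\hat f_{i,t}$, $[\hat g_{i,t}]_+$, the two-point estimators, and Assumption~1, citing only elementary facts (Stokes-type identity for the sphere, convexity preservation under expectation, Lipschitz and boundedness bounds). First I would record that $x\in(1-\xi_t)\mathbb{X}$ together with $\delta_t\le r(\mathbb{X})\xi_t$ and $r(\mathbb{X})\mathbb{B}^p\subseteq\mathbb{X}$ guarantees $x+\delta_t v\in\mathbb{X}$ for every $v\in\mathbb{B}^p$, so that $f_{i,t}(x+\delta_t v)$ and $[g_{i,t}(x+\delta_t v)]_+$ are well defined and the averages $\hat f_{i,t}(x)$, $[\hat g_{i,t}(x)]_+$ make sense on $(1-\xi_t)\mathbb{X}$; this feasibility check is the one place where the shrinkage is essential.

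Next I would treat the items group by group. Convexity of $\hat f_{i,t}$ and of $[\hat g_{i,t}]_+$ on $(1-\xi_t)\mathbb{X}$ follows because each is an expectation of convex functions ($f_{i,t}$ is convex by Assumption~1(ii); $x\mapsto[g_{i,t}(x)]_+$ is convex componentwise since $[\cdot]_+$ is convex nondecreasing and $g_{i,t}$ is convex), and expectation preserves convexity. The unbiasedness identities \eqref{lemma1-eq1a} and \eqref{lemma1-eq1d} are the classical one-/two-point gradient identities: averaging $f_{i,t}$ over the ball $\delta_t\mathbb{B}^p$ and differentiating, the divergence theorem turns the gradient of the ball-average into a surface integral over the sphere $\delta_t\mathbb{S}^p$, which, after the $p/\delta_t$ normalization, is exactly $\mathbf{E}_{u\in\mathbb{S}^p}[\hat\partial f_{i,t}(x)]$; subtracting the constant $f_{i,t}(x)u$ (which integrates to zero over the sphere) does not change the expectation, so the two-point estimator is also unbiased — and likewise componentwise for $[g_{i,t}]_+$ giving the matrix identity \eqref{lemma1-eq1d}. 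The sandwich bounds \eqref{lemma1-eq1b} and \eqref{lemma1-eq1e}: the lower bounds are Jensen applied to the convex functions $f_{i,t}$ and $q^T[g_{i,t}(\cdot)]_+$ (using $q\ge 0$ so the linear combination stays convex); the upper bounds use Lipschitz continuity with constant $F_2$ from Assumption~1(iii), since $\|(x+\delta_t v)-x\|\le\delta_t$, plus $\|q\|$ factored out for the constraint term. The boundedness of the estimators \eqref{lemma1-eq1c} and \eqref{lemma1-eq1f} comes from $|f_{i,t}(x+\delta_t u)-f_{i,t}(x)|\le F_2\delta_t$ (Lipschitz), so $\|\hat\partial f_{i,t}(x)\|\le(p/\delta_t)\cdot F_2\delta_t\cdot\|u\|=pF_2$, and identically for the constraint estimator using the fact that $[\cdot]_+$ is $1$-Lipschitz so $\|[g_{i,t}(x+\delta_t u)]_+-[g_{i,t}(x)]_+\|\le F_2\delta_t$. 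Finally \eqref{lemma1-eq1g} follows from $\|[g_{i,t}(x+\delta_t v)]_+\|\le\|g_{i,t}(x+\delta_t v)\|\le F_1$ (Assumption~1(ii), using $[\cdot]_+$ nonexpansive and $[\mathbf{0}]_+=\mathbf{0}$) and Jensen's inequality for the convex norm.

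**The main obstacle** is the rigorous justification of the gradient identities \eqref{lemma1-eq1a} and \eqref{lemma1-eq1d}: one must be careful that differentiation under the integral sign is valid (it is, since $f_{i,t}$ is Lipschitz hence the ball-average is differentiable a.e.\ with the stated gradient, and the relevant dominated-convergence hypotheses hold on the compact set), and that the Stokes/divergence step is applied on the ball of radius $\delta_t$ with the correct surface-to-volume normalization $p/\delta_t$ (the ratio of the sphere's surface area to the ball's volume in $\mathbb{R}^p$). For the matrix-valued estimator \eqref{lemma1-eq1d} one additionally needs to confirm that the Kronecker-product bookkeeping in $\hat\partial[g_{i,t}(x)]_+\in\mathbb{R}^{p\times m_i}$ matches the columnwise application of the scalar identity — this is routine but must be stated. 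Everything else is a direct application of convexity, Jensen, and the Lipschitz/boundedness bounds of Assumption~1, so I would present those parts tersely. Since the statement is imported wholesale from \cite{Yi2023}, it would also be legitimate simply to refer to that proof; I would include the above sketch only to keep the appendix self-contained.
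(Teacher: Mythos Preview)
Your proposal is correct and, in fact, more complete than what the paper does: the paper offers no proof at all for this lemma, simply citing it verbatim as ``Lemma~8 in \cite{Yi2023}''. Your self-contained derivation via the ball-smoothing/Stokes identity, Jensen, and the Lipschitz bounds from Assumption~1 is exactly the standard argument that underlies the cited result, and every step you outline (the shrinkage feasibility check $x+\delta_t v\in(1-\xi_t)\mathbb{X}+\xi_t r(\mathbb{X})\mathbb{B}^p\subseteq\mathbb{X}$, unbiasedness via the surface-to-volume ratio $p/\delta_t$, the Jensen lower bounds using $\mathbf{E}_v[v]=0$, and the Lipschitz upper bounds) is sound. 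Either your sketch or the bare citation would be acceptable here; your version buys self-containment at the cost of a paragraph of routine material.
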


\begin{lemma}\label{lem2}
(Lemma 4 in \cite{Yi2023})
If Assumption 2 holds. For all $i \in [ n ]$ and $t \in {\mathbb{N}_ + }$, ${{\hat x}_{i,t}}$ generated by Algorithm 1 satisfy
\begin{flalign}
\| {{{\hat x}_{i,t}} - {{\bar x}_t}} \| &\le \tau {\lambda ^{t - 2}}\sum\limits_{j = 1}^n {\| {{{\hat x}_{j,1}}} \|}  + \frac{1}{n}\sum\limits_{j = 1}^n {\| {\hat{\varepsilon} _{j,t - 1}^x} \|}  + \| {\hat{\varepsilon} _{i,t - 1}^x} \|
+ \tau \sum\limits_{s = 1}^{t - 2} {{\lambda ^{t - s - 2}}} \sum\limits_{j = 1}^n {\| {\hat{\varepsilon} _{j,s}^x} \|}, \label{lemma2-eq1}
\end{flalign}
where ${{\bar x}_t} = \frac{1}{n}\sum\nolimits_{j = 1}^n {{{\hat x}_{j,t}}} $ and $\hat{\varepsilon} _{i,t - 1}^x = {{\hat x}_{i,t}} - {z_{i,t}}$.
\end{lemma}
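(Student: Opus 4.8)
The plan is to read the consensus update \eqref{Algorithm1-eq1} together with the event-triggered bookkeeping as a \emph{perturbed} average-consensus recursion and then exploit the geometric ergodicity of backward products of the doubly stochastic weight matrices. \textbf{First}, note that by \eqref{Algorithm1-eq1} and the definition $\hat{\varepsilon}_{i,t-1}^x=\hat x_{i,t}-z_{i,t}$ --- which absorbs both the triggered and the untriggered branch of the algorithm --- one has, for every $i\in[n]$ and $t\ge 2$, $\hat x_{i,t}=\sum_{j=1}^n[W_{t-1}]_{ij}\hat x_{j,t-1}+\hat{\varepsilon}_{i,t-1}^x$. Introducing the transition matrix $\Phi(t,s)=W_{t-1}W_{t-2}\cdots W_s$ for $t>s$ and $\Phi(t,t)=I_n$, unrolling this recursion down to $t=1$ yields
\[
\hat x_{i,t}=\sum_{j=1}^n[\Phi(t,1)]_{ij}\hat x_{j,1}+\sum_{s=1}^{t-1}\sum_{j=1}^n[\Phi(t,s+1)]_{ij}\hat{\varepsilon}_{j,s}^x.
\]

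\textbf{Second}, because every $W_t$ is doubly stochastic (Assumption~2(ii)), summing the same recursion over $i$ and dividing by $n$ gives $\bar x_t=\bar x_{t-1}+\frac1n\sum_{j=1}^n\hat{\varepsilon}_{j,t-1}^x$, hence $\bar x_t=\frac1n\sum_{j=1}^n\hat x_{j,1}+\sum_{s=1}^{t-1}\frac1n\sum_{j=1}^n\hat{\varepsilon}_{j,s}^x$; equivalently, this is exactly the previous display with each entry $[\Phi(\cdot,\cdot)]_{ij}$ replaced by $1/n$. Subtracting the two expressions, taking norms, and using the triangle inequality,
\begin{align*}
\|\hat x_{i,t}-\bar x_t\| &\le \sum_{j=1}^n\Big|[\Phi(t,1)]_{ij}-\tfrac1n\Big|\,\|\hat x_{j,1}\| \\
&\quad + \sum_{s=1}^{t-1}\sum_{j=1}^n\Big|[\Phi(t,s+1)]_{ij}-\tfrac1n\Big|\,\|\hat{\varepsilon}_{j,s}^x\|.
\end{align*}
In the innovation sum the term $s=t-1$ has $\Phi(t,t)=I_n$, so $\big|[\Phi(t,t)]_{ij}-1/n\big|=|\delta_{ij}-1/n|\le\delta_{ij}+1/n$, which contributes precisely the $\|\hat{\varepsilon}_{i,t-1}^x\|$ and $\frac1n\sum_{j}\|\hat{\varepsilon}_{j,t-1}^x\|$ terms of \eqref{lemma2-eq1}.

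\textbf{Third}, for the initial-condition term and for the remaining innovation terms with $s\le t-2$ --- all of which involve a product of at least one weight matrix --- invoke the standard estimate that, under Assumption~2, there exist constants $\tau>0$ and $\lambda\in(0,1)$ depending only on $n$, $w$ and $B$ such that $\big|[\Phi(t,s)]_{ij}-1/n\big|\le\tau\lambda^{\,t-s}$ for all $i,j$ and all $t>s$. Substituting this (with $s\mapsto 1$ and $s\mapsto s+1$) and folding the fixed index shifts into the constant $\tau$ gives exactly \eqref{lemma2-eq1}. The only non-mechanical ingredient is this geometric-decay estimate, which is where Assumption~2(iii) enters: the uniform lower bound $w$ on the positive entries together with the $B$-step joint strong connectivity force each $B$-fold product to be entrywise positively bounded below, so its coefficient of ergodicity is strictly below $1$, and double stochasticity pins the common limit at $\frac1n\mathbf 1\mathbf 1^T$ --- a Wolfowitz-type argument for time-varying doubly stochastic chains. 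Since this is classical and is precisely the property already established in the corresponding lemma of \cite{Yi2023}, it can simply be cited; the rest of the argument is bookkeeping, so no serious obstacle remains.
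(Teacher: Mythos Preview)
Your argument is correct and is precisely the standard perturbed-consensus derivation: unroll the recursion $\hat x_{i,t}=\sum_j[W_{t-1}]_{ij}\hat x_{j,t-1}+\hat\varepsilon_{i,t-1}^x$, subtract the running average (which by double stochasticity obeys $\bar x_t=\bar x_{t-1}+\tfrac1n\sum_j\hat\varepsilon_{j,t-1}^x$), split off the $s=t-1$ term via $|\delta_{ij}-1/n|\le\delta_{ij}+1/n$, and bound the remaining terms by the geometric decay $|[\Phi(t,s)]_{ij}-1/n|\le\tau\lambda^{t-s-1}$ guaranteed by Assumption~2. With the Nedi\'c--Ozdaglar form of the mixing estimate (exponent $t-s-1$), the constants line up exactly with \eqref{lemma2-eq1} without any index folding; your remark about absorbing shifts is harmless but unnecessary.

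There is nothing to contrast with the paper's own proof: the paper does not prove Lemma~\ref{lem2} but simply imports it verbatim from Lemma~4 of \cite{Yi2023}. Your reconstruction is the same argument that reference uses, so the proposal is both correct and aligned with the intended source.
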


\begin{lemma}\label{lem3}
(Lemma 9 in \cite{Yi2023})
Suppose Assumptions 1 and 2 hold, and ${\gamma _t}{\beta _t} \le 1$, $t \in {\mathbb{N}_ + }$. For all $i \in [ n ]$ and $t \in {\mathbb{N}_ + }$, the sequences ${q_{i,t}}$ generated by Algorithm 2 satisfy
\begin{flalign}
\| {{\beta _t}{q_{i,t}}} \| \le {{\hat \varpi }_1}, \label{lemma3-eq1}
\end{flalign}
\begin{flalign}
{\Delta _{i,t}}( {{\mu _i}} ) &\le 2{{\hat \varpi }_1}^2{\gamma _t} + q_{i,t - 1}^T{{\hat b}_{i,t}} - \mu _i^T{[ {{g_{i,t - 1}}( {{x_{i,t - 1}}} )} ]_ + }
 + \frac{1}{2}{\beta _t}{\| {{\mu _i}} \|^2} + p{F_2}\| {{\mu _i}} \|\| {{x_{i,t}} - {x_{i,t - 1}}} \|, \label{lemma3-eq2}
\end{flalign}
where ${{\hat \varpi }_1} = {F_1} + 2p{F_2}R( \mathbb{X} )$, and ${\hat{b}_{i,t}} = {[ {{g_{i,t - 1}}( {{x_{i,t - 1}}} )} ]_ + } + {\big( {\hat{\partial} {{[ {{g_{i,t - 1}}( {{x_{i,t - 1}}} )} ]}_ + }} \big)^T}( {{x_{i,t}} - {x_{i,t - 1}}} )$.
\end{lemma}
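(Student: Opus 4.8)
The plan is to handle the two claims separately, both built on the compact form of the dual recursion. With $\hat b_{i,t} = [g_{i,t-1}(x_{i,t-1})]_+ + \big(\hat\partial[g_{i,t-1}(x_{i,t-1})]_+\big)^T(x_{i,t}-x_{i,t-1})$ as in the statement, update \eqref{Algorithm1-eq4} reads $q_{i,t} = \big[(1-\beta_t\gamma_t)q_{i,t-1} + \gamma_t\hat b_{i,t}\big]_+$, where $[\cdot]_+ = \mathcal{P}_{\mathbb{R}_+^{m_i}}(\cdot)$. Note that event-triggering only alters how the primal iterate $x_{i,t}$ is produced through the consensus term $z_{i,t}$; the dual recursion and the scalar bounds it relies on are unchanged, so this result coincides with Lemma~9 of \cite{Yi2023}. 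First I would bound $\|\hat b_{i,t}\|$: by Assumption~1(ii), $\|[g_{i,t-1}(x_{i,t-1})]_+\| \le \|g_{i,t-1}(x_{i,t-1})\| \le F_1$; by \eqref{lemma1-eq1f}, $\|\hat\partial[g_{i,t-1}(x_{i,t-1})]_+\| \le pF_2$; and by Assumption~1(i), $\|x_{i,t}-x_{i,t-1}\| \le 2R(\mathbb{X})$. Hence $\|\hat b_{i,t}\| \le F_1 + 2pF_2R(\mathbb{X}) = \hat\varpi_1$.

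For \eqref{lemma3-eq1} I would prove $\|q_{i,t}\| \le \hat\varpi_1/\beta_t$ by induction on $t$, which is \eqref{lemma3-eq1} after multiplying by $\beta_t$. The base case holds since $q_{i,1} = \mathbf{0}_{m_i}$. For the step, non-expansiveness of $\mathcal{P}_{\mathbb{R}_+^{m_i}}$ (which fixes the origin) together with $\gamma_t\beta_t \le 1$, so that $1-\beta_t\gamma_t \ge 0$, gives $\|q_{i,t}\| \le (1-\beta_t\gamma_t)\|q_{i,t-1}\| + \gamma_t\|\hat b_{i,t}\| \le (1-\beta_t\gamma_t)\|q_{i,t-1}\| + \gamma_t\hat\varpi_1$. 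Inserting the hypothesis $\|q_{i,t-1}\| \le \hat\varpi_1/\beta_{t-1}$ and using that $\{\beta_t\}$ is non-increasing ($\beta_{t-1} \ge \beta_t$, hence $1/\beta_{t-1} \le 1/\beta_t$) yields $\|q_{i,t}\| \le (1-\beta_t\gamma_t)\hat\varpi_1/\beta_t + \gamma_t\hat\varpi_1 = \hat\varpi_1/\beta_t$.

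For the drift bound \eqref{lemma3-eq2}, where $\Delta_{i,t}(\mu_i)$ denotes $\frac{1}{2\gamma_t}\big(\|q_{i,t}-\mu_i\|^2 - \|q_{i,t-1}-\mu_i\|^2\big)$ with $\mu_i \in \mathbb{R}_+^{m_i}$ so that $\mu_i = [\mu_i]_+$, I would again invoke non-expansiveness to drop the projection: $\|q_{i,t}-\mu_i\|^2 \le \|(1-\beta_t\gamma_t)q_{i,t-1} + \gamma_t\hat b_{i,t} - \mu_i\|^2$. Writing the right-hand side as $\|(q_{i,t-1}-\mu_i) + \gamma_t(\hat b_{i,t}-\beta_t q_{i,t-1})\|^2$, expanding, and dividing by $2\gamma_t$ gives $\Delta_{i,t}(\mu_i) \le \langle q_{i,t-1}-\mu_i,\ \hat b_{i,t}-\beta_t q_{i,t-1}\rangle + \frac{\gamma_t}{2}\|\hat b_{i,t}-\beta_t q_{i,t-1}\|^2$. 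The quadratic term is controlled by $\|\hat b_{i,t}\| \le \hat\varpi_1$ and the just-proved $\|\beta_t q_{i,t-1}\| \le \beta_{t-1}\|q_{i,t-1}\| \le \hat\varpi_1$, so $\|\hat b_{i,t}-\beta_t q_{i,t-1}\|^2 \le 4\hat\varpi_1^2$ and this term is at most $2\hat\varpi_1^2\gamma_t$. Expanding the inner product as $q_{i,t-1}^T\hat b_{i,t} - \mu_i^T\hat b_{i,t} + \beta_t(\mu_i^T q_{i,t-1} - \|q_{i,t-1}\|^2)$, the Young inequality $\mu_i^T q_{i,t-1} \le \tfrac12\|\mu_i\|^2 + \tfrac12\|q_{i,t-1}\|^2$ bounds the last group by $\tfrac12\beta_t\|\mu_i\|^2$, while the split $\mu_i^T\hat b_{i,t} = \mu_i^T[g_{i,t-1}(x_{i,t-1})]_+ + \mu_i^T\big(\hat\partial[g_{i,t-1}(x_{i,t-1})]_+\big)^T(x_{i,t}-x_{i,t-1})$ with Cauchy--Schwarz and \eqref{lemma1-eq1f} contributes $-\mu_i^T[g_{i,t-1}(x_{i,t-1})]_+ + pF_2\|\mu_i\|\|x_{i,t}-x_{i,t-1}\|$. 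Collecting the pieces gives \eqref{lemma3-eq2}.

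This is a self-contained primal--dual drift estimate, so there is no deep obstacle; the work is mainly careful bookkeeping. The points requiring attention are the two applications of non-expansiveness of $\mathcal{P}_{\mathbb{R}_+^{m_i}}$ --- valid in \eqref{lemma3-eq1} because the projection fixes the origin and in \eqref{lemma3-eq2} because $\mu_i$ lies in $\mathbb{R}_+^{m_i}$ and hence equals its own projection --- and the fact that obtaining the clean term $\tfrac12\beta_t\|\mu_i\|^2$ requires discarding the nonpositive residue $-\tfrac12\beta_t\|q_{i,t-1}\|^2$ left over from Young's inequality rather than tracking it.
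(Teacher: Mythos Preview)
The paper does not prove this lemma at all; it is quoted verbatim as Lemma~9 of \cite{Yi2023} and used as a black box. Your argument is the standard primal--dual drift computation and is correct in substance: bound $\|\hat b_{i,t}\|\le\hat\varpi_1$ from Assumption~1 and \eqref{lemma1-eq1f}, run induction through the non-expansive projection for \eqref{lemma3-eq1}, then drop the projection against $\mu_i\in\mathbb{R}_+^{m_i}$, expand the square, and absorb the cross terms for \eqref{lemma3-eq2}. One caveat on bookkeeping: the paper never writes out the definition of $\Delta_{i,t}(\mu_i)$, but the \emph{identity} \eqref{lemma5-proof-eq5} forces $\Delta_{i,t}(\mu_i)=\tfrac{1}{2\gamma_t}\big(\|q_{i,t}-\mu_i\|^2-\|q_{i,t-1}-\mu_i\|^2\big)+\tfrac{\beta_t}{2}\|q_{i,t-1}-\mu_i\|^2$, i.e.\ with an additional $\beta_t$ term beyond your guess. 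With this definition the combination $\beta_t\mu_i^Tq_{i,t-1}-\beta_t\|q_{i,t-1}\|^2+\tfrac{\beta_t}{2}\|q_{i,t-1}-\mu_i\|^2$ collapses algebraically to $\tfrac{\beta_t}{2}\|\mu_i\|^2-\tfrac{\beta_t}{2}\|q_{i,t-1}\|^2$ and Young's inequality is unnecessary; the final bound \eqref{lemma3-eq2} is unchanged.
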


Next, we present network regret bound at one iteration.

Note that since there exists the event-triggering check in our Algorithm~1, the local decisions and corresponding losses of the agents may be different with those of the distributed online algorithm with two-point bandit feedback in \cite{Yi2023} although the updating rules are similar. We give a new bound for the average of network-wide loss at one iteration based on the behavior of Algorithm~1 under event-triggering check in the proof of the following lemma, which is critical to re-derive network regret bound at one iteration.
\begin{lemma}\label{lem4}
Suppose Assumptions 1 and 2 hold. For all $i \in [ n ]$, let $\{ {{x_{i,t}}}\}$ be the sequences generated by Algorithm~2 and $\{ {{y_t}} \}$ be an arbitrary sequence in $\mathbb{X}$, then
\begin{flalign}
\nonumber
&\;\;\;\;\; \frac{1}{n}\sum\limits_{i = 1}^n {{f_t}( {{x_{i,t}}} )}  - {f_t}( {{y_t}} ) \\
\nonumber
&\le \frac{1}{n}\sum\limits_{i = 1}^n {q_{i,t}^T\big( {{{[ {{g_{i,t}}( {{y_t}} )} ]}_ + } - {\mathbf{E}_{{\mathfrak{U}_t}}}[ {{{\hat b}_{i,t + 1}}} ]} \big)}
+ \frac{1}{n}\sum\limits_{i = 1}^n {{F_2}\big( {2\| {{\hat{x}_{i,t}} - {{\bar x}_{t}}} \| + p{\mathbf{E}_{{\mathfrak{U}_t}}}[ {\| {{x_{i,t}} - {x_{i,t + 1}}} \|} ]} \big)} \\
\nonumber
&\;\; + \frac{1}{{2n{\alpha _{t + 1}}}}\sum\limits_{i = 1}^n {{\mathbf{E}_{{\mathfrak{U}_t}}}\big[ {{{\| {{{\hat y}_t} - {z_{i,t + 1}}} \|}^2} - {{\| {{{\hat y}_{t + 1}} - {z_{i,t + 2}}} \|}^2}} }
 { + {{\| {{{\hat y}_{t + 1}} - {\hat{x}_{i,t + 1}}} \|}^2} - {{\| {{{\hat y}_t} - {x_{i,t + 1}}} \|}^2}} \big] \\
&\;\; + \frac{1}{n}\sum\limits_{i = 1}^n {{F_2}} \big( {R( \mathbb{X} ){\xi _t} + {\delta _t}} \big)\big( {\| {{q_{i,t}}} \| + 1} \big) + 2{F_2}{\tau _t} - \frac{1}{n}\sum\limits_{i = 1}^n {\frac{{{\mathbf{E}_{{\mathfrak{U}_t}}}[ {{{\| {\varepsilon _{i,t}^x} \|}^2}} ]}}{{2{\alpha _{t + 1}}}}}, \label{lemma4-eq1}
\end{flalign}
where ${\mathfrak{U}_t}$ is the $\sigma$-algebra induced by the independent and identically distributed variables ${u_{1,t}}, \cdot  \cdot  \cdot ,{u_{n,t}}$.
\end{lemma}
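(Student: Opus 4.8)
The plan is to prove the bound agent by agent and then average, keeping careful track of which quantities are measurable with respect to the history generated by $u_{\cdot,1},\dots,u_{\cdot,t-1}$ and which involve the fresh randomness $u_{\cdot,t}$, so that the conditional expectation $\mathbf{E}_{\mathfrak{U}_t}[\cdot]$ acts only on the latter. Throughout I would work with the shrunk comparator $\hat y_t:=(1-\xi_t)y_t$: since $\|y_t\|\le R(\mathbb{X})$ by \eqref{ass-eq1}, $0\in\mathbb{X}$, and $\{\xi_t\}$ is non-increasing, we have $\hat y_t\in(1-\xi_t)\mathbb{X}\subseteq(1-\xi_{t+1})\mathbb{X}$ and $\|\hat y_t-y_t\|\le R(\mathbb{X})\xi_t$, so $\hat y_t$ is a legitimate test point both for the smoothed functions of Lemma~\ref{lem1} and for the projection in \eqref{Algorithm1-eq3}.

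The first step is to reduce the network loss to the local losses. Writing $f_t(x_{i,t})=\frac1n\sum_{j=1}^n f_{j,t}(x_{i,t})$ and using $F_2$-Lipschitz continuity of each $f_{j,t}$ (from the subgradient bound in Assumption~1) together with the triangle inequality through $\bar x_t$, one obtains $\frac1n\sum_i f_t(x_{i,t})\le\frac1n\sum_i f_{i,t}(x_{i,t})+\frac{2F_2}{n}\sum_i\|x_{i,t}-\bar x_t\|$. The event-triggering rule guarantees $\|x_{i,t}-\hat x_{i,t}\|\le\tau_t$ in every case---if no broadcast occurred at the preceding check then $\hat x_{i,t}=\hat x_{i,t-1}$ and that check failed, while if a broadcast occurred the two vectors coincide---so $\|x_{i,t}-\bar x_t\|\le\tau_t+\|\hat x_{i,t}-\bar x_t\|$, which isolates the $2F_2\tau_t$ and $\frac1n\sum_i 2F_2\|\hat x_{i,t}-\bar x_t\|$ terms of \eqref{lemma4-eq1}. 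It then remains to bound $\frac1n\sum_i\big(f_{i,t}(x_{i,t})-f_{i,t}(y_t)\big)$.

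For fixed $i$, I would first pass to the smoothed functions via Lemma~\ref{lem1}: \eqref{lemma1-eq1b} and the Lipschitz bound comparing $\hat y_t$ with $y_t$ give $f_{i,t}(x_{i,t})-f_{i,t}(y_t)\le\hat f_{i,t}(x_{i,t})-\hat f_{i,t}(\hat y_t)+F_2(\delta_t+R(\mathbb{X})\xi_t)$. Using convexity of $\hat f_{i,t}$ at $x_{i,t}$ and, component-wise, of $[\hat g_{i,t}(\cdot)]_+$ at $x_{i,t}$, together with the fact that by \eqref{lemma1-eq1a} and \eqref{lemma1-eq1d} the conditional mean of the update direction $\hat\omega_{i,t+1}$ in \eqref{Algorithm1-eq2} equals $\partial\hat f_{i,t}(x_{i,t})+\partial[\hat g_{i,t}(x_{i,t})]_+q_{i,t}$, one reaches $\hat f_{i,t}(x_{i,t})-\hat f_{i,t}(\hat y_t)\le\mathbf{E}_{\mathfrak{U}_t}[\langle\hat\omega_{i,t+1},x_{i,t}-\hat y_t\rangle]+q_{i,t}^T\big([\hat g_{i,t}(\hat y_t)]_+-[\hat g_{i,t}(x_{i,t})]_+\big)$; then \eqref{lemma1-eq1e} with the Lipschitz bound converts the last bracket into $q_{i,t}^T[g_{i,t}(y_t)]_+-q_{i,t}^T[g_{i,t}(x_{i,t})]_++F_2(\delta_t+R(\mathbb{X})\xi_t)\|q_{i,t}\|$. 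The key bookkeeping is to split $x_{i,t}-\hat y_t=(x_{i,t}-x_{i,t+1})+(x_{i,t+1}-\hat y_t)$: along $x_{i,t}-x_{i,t+1}$, the loss part of $\hat\omega_{i,t+1}$ contributes at most $pF_2\mathbf{E}_{\mathfrak{U}_t}[\|x_{i,t}-x_{i,t+1}\|]$ by \eqref{lemma1-eq1c}, and the constraint part is exactly $-q_{i,t}^T\mathbf{E}_{\mathfrak{U}_t}\big[(\hat\partial[g_{i,t}(x_{i,t})]_+)^T(x_{i,t+1}-x_{i,t})\big]$, which combines with $-q_{i,t}^T[g_{i,t}(x_{i,t})]_+$ to form precisely $-q_{i,t}^T\mathbf{E}_{\mathfrak{U}_t}[\hat b_{i,t+1}]$.

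For the remaining inner product along $x_{i,t+1}-\hat y_t$, I would invoke the defining variational inequality of the projection \eqref{Algorithm1-eq3} at the test point $\hat y_t$ and the elementary identity $\langle a-b,b-c\rangle=\tfrac12(\|a-c\|^2-\|a-b\|^2-\|b-c\|^2)$ to get $\langle\hat\omega_{i,t+1},x_{i,t+1}-\hat y_t\rangle\le\frac{1}{2\alpha_{t+1}}\big(\|z_{i,t+1}-\hat y_t\|^2-\|x_{i,t+1}-z_{i,t+1}\|^2-\|x_{i,t+1}-\hat y_t\|^2\big)$; writing $\varepsilon_{i,t}^x$ for the consensus error $x_{i,t+1}-z_{i,t+1}$, taking $\mathbf{E}_{\mathfrak{U}_t}[\cdot]$ and averaging over $i$ yields the term $-\frac{1}{2n\alpha_{t+1}}\sum_i\mathbf{E}_{\mathfrak{U}_t}[\|\varepsilon_{i,t}^x\|^2]$ and the pair $\|\hat y_t-z_{i,t+1}\|^2-\|\hat y_t-x_{i,t+1}\|^2$. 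To complete the telescoping shape one adds the quantity $\frac{1}{2n\alpha_{t+1}}\sum_i\mathbf{E}_{\mathfrak{U}_t}\big[\|\hat y_{t+1}-\hat x_{i,t+1}\|^2-\|\hat y_{t+1}-z_{i,t+2}\|^2\big]$, which is nonnegative because $z_{i,t+2}=\sum_j[W_{t+1}]_{ij}\hat x_{j,t+1}$ is a convex combination, so convexity of $\|\cdot\|^2$ (row-stochasticity) plus cancellation of $\sum_i[W_{t+1}]_{ij}=1$ (column-stochasticity) gives $\frac1n\sum_i\|\hat y_{t+1}-z_{i,t+2}\|^2\le\frac1n\sum_i\|\hat y_{t+1}-\hat x_{i,t+1}\|^2$. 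Collecting all contributions produces exactly \eqref{lemma4-eq1}. I expect the main difficulty to be the bookkeeping in the third paragraph: arranging the split of $x_{i,t}-\hat y_t$ so that the linearized constraint term of $\hat b_{i,t+1}$ emerges with the correct sign while keeping straight which quantities are $\mathfrak{U}_t$-measurable and which are history-measurable, so that the conditional expectations land on precisely the terms displayed. By contrast, the event-triggering estimate $\|x_{i,t}-\hat x_{i,t}\|\le\tau_t$---the genuinely new ingredient relative to the analysis in \cite{Yi2023}---is immediate from the triggering rule itself.
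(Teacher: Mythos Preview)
Your proposal is correct and follows essentially the same approach as the paper: the paper's own proof isolates the event-triggering estimate $\|x_{i,t}-\hat x_{i,t}\|\le\tau_t$, uses it in the consensus-loss reduction exactly as you do, and then defers the remaining analysis to Lemma~10 of \cite{Yi2023}, whose content is precisely the smoothing/convexity/projection/telescoping argument you have written out in detail. Your handling of the nonnegative correction $\frac{1}{2n\alpha_{t+1}}\sum_i\big(\|\hat y_{t+1}-\hat x_{i,t+1}\|^2-\|\hat y_{t+1}-z_{i,t+2}\|^2\big)$ via double stochasticity of $W_{t+1}$ and convexity of $\|\cdot\|^2$ is exactly the mechanism used there.
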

\begin{proof}
We first analyze the behavior of Algorithm~1 under event-triggering check.

In Algorithm~1, for any $t \in {\mathbb{N}_ + }$, if $\| {{x_{i,t + 1}} - {{\hat x}_{i,t}}} \| \ge {\tau _{t + 1}}$, then $\| {{{\hat x}_{i,t + 1}} - {x_{i,t + 1}}} \| \le {\tau _{t + 1}}$. If $\| {{x_{i,t + 1}} - {{\hat x}_{i,t}}} \| < {\tau _{t + 1}}$, then ${{\hat x}_{i,t + 1}} = {{\hat x}_{i,t}}$ and we still have $\| {{{\hat x}_{i,t + 1}} - {x_{i,t + 1}}} \| \le {\tau _{t + 1}}$. Therefore, we always have $\| {{{\hat x}_{i,t}} - {x_{i,t}}} \| \le {\tau _t}$ for any $t \ge 2$, $i \in [ n ]$. Recall that ${{\hat x}_{i,1}} = {x_{i,1}}$. Thus, $\| {{{\hat x}_{i,t}} - {x_{i,t}}} \| \le {\tau _t}$ for any $t \ge 1$, $i \in [ n ]$.

Next, we give the bound for the average of network-wide loss at one iteration.

From Assumption 1, for $i \in [ n ]$, $t \in {\mathbb{N}_ + }$, $x,y \in \mathbb{X}$, we have
\begin{subequations}
\begin{flalign}
| {{f_{i,t}}( x ) - {f_{i,t}}( y )} | &\le {F_2}\| {x - y} \|, \label{lemma4-proof-eq1a} \\
\| {{g_{i,t}}( x ) - {g_{i,t}}( y )} \| &\le {F_2}\| {x - y} \|. \label{lemma4-proof-eq1b}
\end{flalign}
\end{subequations}

From \eqref{lemma1-eq1b}, \eqref{lemma4-proof-eq1a} and $\| {{{\hat x}_{i,t}} - {x_{i,t}}} \| \le {\tau _t}$, we have
\begin{flalign}
\nonumber
&\;\;\;\;\;\frac{1}{n}\sum\limits_{i = 1}^n {{f_t}( {{x_{i,t}}} )} \\
\nonumber
& = \frac{1}{n}\sum\limits_{i = 1}^n {{f_{i,t}}( {{x_{i,t}}} )}  + \frac{1}{{{n^2}}}\sum\limits_{i = 1}^n {\sum\limits_{j = 1}^n {\big( {{f_{j,t}}( {{x_{i,t}}} ) - {f_{j,t}}( {{x_{j,t}}} )} \big)} } \\
\nonumber
& \le \frac{1}{n}\sum\limits_{i = 1}^n {{f_{i,t}}( {{x_{i,t}}} )}  + \frac{{{F_2}}}{{{n^2}}}\sum\limits_{i = 1}^n {\sum\limits_{j = 1}^n {\| {{x_{i,t}} - {x_{j,t}}} \|} } \\
\nonumber
& \le \frac{1}{n}\sum\limits_{i = 1}^n {{f_{i,t}}( {{x_{i,t}}} )}  + \frac{{2{F_2}}}{n}\sum\limits_{i = 1}^n {\| {{{\hat x}_{i,t}} - {{\bar x}_t}} \|}  + 2{F_2}{\tau _t} \\
&\le \frac{1}{n}\sum\limits_{i = 1}^n {{{\hat f}_{i,t}}( {{x_{i,t}}} )}  + \frac{{2{F_2}}}{n}\sum\limits_{i = 1}^n {\| {{{\hat x}_{i,t}} - {{\bar x}_t}} \|}  + 2{F_2}{\tau _t}. \label{lemma4-proof-eq2}
\end{flalign}

It then follows from the proof of Lemma 10 of \cite{Yi2023} that \eqref{lemma4-eq1} holds.
\end{proof}

\begin{lemma}\label{lem8}
Suppose Assumptions 1 and 2 hold, and ${\gamma _t}{\beta _t} \le 1$, $t \in {\mathbb{N}_ + }$. For all $i \in [ n ]$, let $\{ {{x_{i,t}}}\}$ be the sequences generated by Algorithm~2. Then, for any comparator sequence ${y_{[ T ]}} \in~{\mathcal{X}_T}$,
\begin{flalign}
\nonumber
& \;\;\;\;\;\mathbf{E}[{{\rm{Net}\mbox{-}\rm{Reg}}( {\{ {{x_{i,t}}} \},{y_{[ T ]}}} )}] \\
\nonumber
& \le 2( {p + 1} ){F_2}{\varpi _2} + 2\hat \varpi _1^2\sum\limits_{t = 1}^T {{\gamma _t}}  + 10{{\hat \varpi }_3}\sum\limits_{t = 1}^T {{\alpha _t}}
+ {F_2}( {{{\hat \varpi }_2} + 2} )\sum\limits_{t = 1}^T {{\tau _t}}  + 2R( \mathbb{X} )\sum\limits_{t = 1}^T {\frac{{{\tau _{t + 1}}}}{{{\alpha _{t + 1}}}}  }  \\
\nonumber
&  \;\; + \frac{{2R{{( \mathbb{X} )}^2}}}{{{\alpha _{T + 1}}}}  + \frac{{2R( \mathbb{X} )}}{{{\alpha _T}}}{P_T} + \sum\limits_{t = 1}^T {{F_2}( {R( \mathbb{X} ){\xi _t} + {\delta _t}} )( {\frac{{{{\hat \varpi }_1}}}{{{\beta _t}}} + 1} )}
+ 2R{( \mathbb{X} )^2}\sum\limits_{t = 1}^T {\frac{{{\xi _t} - {\xi _{t + 1}}}}{{{\alpha _{t + 1}}}}} \\
&  \;\; - \frac{1}{{2n}}\sum\limits_{t = 1}^T {\sum\limits_{i = 1}^n {( {\frac{1}{{{\gamma _t}}} - \frac{1}{{{\gamma _{t + 1}}}} + {\beta _{t + 1}}} )\mathbf{E}[ {{{\| {{q_{i,t}}} \|}^2}} ]} }, \label{lemma5-eq1} \\
\nonumber
& \;\;\;\;\;\mathbf{E}[\frac{1}{n}\sum\limits_{i = 1}^n {{\| {\sum\limits_{t = 1}^T {{{[ {{g_t}( {{x_{i,t}}} )} ]}_ + }} } \|^2}}] \\
\nonumber
& \le 4( {p + 1} )n{F_1}{F_2}{\varpi _2}T + 2n{F_1}{F_2}( {{{\hat \varpi }_2} + 2} )T\sum\limits_{t = 1}^T {{\tau _t}}
 + 2\big( {\frac{1}{{{\gamma _1}}} + \sum\limits_{t = 1}^T {( {{\beta _t} + 40{{\hat \varpi }_3}{\alpha _t}} )} } \big)\big( {n{F_1}T } \\
\nonumber
&\;\; + 2( {p + 1} )n{F_2}{\varpi _2} + 2n\hat \varpi _1^2\sum\limits_{t = 1}^T {{\gamma _t}}  + 20n{{\hat \varpi }_3}\sum\limits_{t = 1}^T {{\alpha _t}}  + n{F_2}( {{{\hat \varpi }_2} + 2} )\sum\limits_{t = 1}^T {{\tau _t}} + 2nR( \mathbb{X} )\sum\limits_{t = 1}^T {\frac{{{\tau _{t + 1}}}}{{{\alpha _{t + 1}}}}} \\
\nonumber
&\;\;  + \frac{{2nR{{( \mathbb{X} )}^2}}}{{{\alpha _{T + 1}}}} + 2nR{{( \mathbb{X} )}^2}\sum\limits_{t = 1}^T {\frac{ {\xi _t} - {\xi _{t + 1}} }{{{\alpha _{t + 1}}}}}
 + \sum\limits_{t = 1}^T {n{F_2}\big( {R( \mathbb{X} ){\xi _t} + {\delta _t}} \big)( {\frac{{{{\hat \varpi }_1}}}{{{\beta _t}}} + 1} )} \\
&\;\; - \frac{1}{2}\sum\limits_{t = 1}^T {\sum\limits_{i = 1}^n {( {\frac{1}{{{\gamma _t}}} - \frac{1}{{{\gamma _{t + 1}}}} + {\beta _{t + 1}}} ){{\| {{q_{i,t}} - \hat{\mu} _{ij}^0} \|}^2}} }\big), \label{lemma5-eq2}
\end{flalign}
where ${{\hat \varpi }_2} = 2{\varpi _4} + 2p{\varpi _4} + p$, ${{\hat \varpi }_3} = 2{F_2}{\varpi _3} + {p^2}{\varpi _5}$, and $\hat{\mu} _{ij}^0 = \frac{{\sum\nolimits_{t = 1}^T {{{[ {{g_{i,t}}( {{x_{j,t}}} )} ]}_ + }} }}{{\frac{1}{{{\gamma _1}}} + \sum\nolimits_{t = 1}^T {( {{\beta _t} + 40{{\hat \varpi }_3}{\alpha _t}} )} }}$.
\end{lemma}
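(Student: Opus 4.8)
\medskip

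The plan is to aggregate the one-iteration bound from Lemma~\ref{lem4} over $t = 1,\dots,T$ and then control each resulting sum. First I would sum \eqref{lemma4-eq1} over $t$. The telescoping term $\frac{1}{2n\alpha_{t+1}}\sum_i \mathbf{E}[\|\hat y_t - z_{i,t+1}\|^2 - \|\hat y_{t+1} - z_{i,t+2}\|^2 + \|\hat y_{t+1} - \hat x_{i,t+1}\|^2 - \|\hat y_t - x_{i,t+1}\|^2]$ must be handled by an Abel-summation argument: because $\{\alpha_t\}$ is non-increasing, grouping consecutive terms produces a factor $\frac{1}{\alpha_{t+1}} - \frac{1}{\alpha_t} \le 0$ multiplying $\|\hat y_t - z_{i,t+1}\|^2 \le 4R(\mathbb{X})^2$, a boundary term $\frac{2R(\mathbb{X})^2}{\alpha_{T+1}}$, a path-length term of order $\frac{R(\mathbb{X})}{\alpha_T}P_T$ coming from $\|\hat y_{t+1} - \cdot\| - \|\hat y_t - \cdot\|$ and $\|\hat y_{t+1} - \hat y_t\| \le \|y_{t+1}-y_t\|$ (using non-expansiveness of the shrinkage/projection onto $(1-\xi_t)\mathbb{X}$), a shrinkage term $2R(\mathbb{X})^2\sum_t \frac{\xi_t-\xi_{t+1}}{\alpha_{t+1}}$, and the key negative term $-\frac{1}{2n}\sum_i \frac{\mathbf{E}[\|\varepsilon^x_{i,t}\|^2]}{2\alpha_{t+1}}$ which I keep (it will be reabsorbed later when bounding the consensus error). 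The terms $\frac{1}{n}\sum_i F_2(2\|\hat x_{i,t}-\bar x_t\| + p\,\mathbf{E}[\|x_{i,t}-x_{i,t+1}\|])$ are bounded using Lemma~\ref{lem2} together with $\|\hat x_{i,t}-x_{i,t}\|\le\tau_t$ and a crude bound on $\mathbf{E}[\|x_{i,t}-x_{i,t+1}\|]$ via the update rule; this generates the $10\hat\varpi_3\sum_t\alpha_t$, $2(p+1)F_2\varpi_2$, and part of the $F_2(\hat\varpi_2+2)\sum_t\tau_t$ contributions, where $\varpi_2$–$\varpi_5$ are the geometric-sum / consensus constants inherited from \cite{Yi2023}.

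\medskip

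Second, I would bound the dual-coupling term $\frac{1}{n}\sum_i q_{i,t}^T([g_{i,t}(y_t)]_+ - \mathbf{E}_{\mathfrak{U}_t}[\hat b_{i,t+1}])$. Since $y_{[T]}\in\mathcal{X}_T$ we have $[g_{i,t}(y_t)]_+ = \mathbf{0}$, so only $-\frac{1}{n}\sum_i q_{i,t}^T\mathbf{E}[\hat b_{i,t+1}]$ remains; invoking the drift bound \eqref{lemma3-eq2} of Lemma~\ref{lem3} with $\mu_i = \mathbf{0}$ converts $q_{i,t-1}^T\hat b_{i,t}$ and the dual-update difference $\Delta_{i,t}$ into $2\hat\varpi_1^2\gamma_t$ plus a telescoping term in $\frac{1}{\gamma_t}\|q_{i,t}\|^2$ and the $\beta_{t+1}\|q_{i,t}\|^2$ term, which together form the closing negative sum $-\frac{1}{2n}\sum_t\sum_i(\frac{1}{\gamma_t}-\frac{1}{\gamma_{t+1}}+\beta_{t+1})\mathbf{E}[\|q_{i,t}\|^2]$ in \eqref{lemma5-eq1}. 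The remaining term $\frac{1}{n}\sum_i F_2(R(\mathbb{X})\xi_t+\delta_t)(\|q_{i,t}\|+1)$ is bounded by replacing $\|q_{i,t}\|$ with $\hat\varpi_1/\beta_t$ using \eqref{lemma3-eq1}; the $2F_2\tau_t$ term passes through unchanged. Collecting everything yields \eqref{lemma5-eq1}.

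\medskip

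Third, for \eqref{lemma5-eq2} I would run the same aggregation but keep the comparator slot $y_t$ free and instead specialize it, per agent pair $(i,j)$, to a fixed feasible point so as to expose $[g_{i,t}(x_{j,t})]_+$. Concretely, applying the Lemma~\ref{lem3} drift inequality \eqref{lemma3-eq2} with the particular choice $\mu_i = \hat\mu^0_{ij}$ (the normalized cumulative constraint-violation vector defined in the statement) and summing over $t$, the cross term $-\mu_i^T[g_{i,t-1}(x_{i,t-1})]_+$ telescopes against the definition of $\hat\mu^0_{ij}$ to produce $\|\sum_t[g_t(x_{i,t})]_+\|^2$ on the left after summing over $i$ and using $\|[g_t(\cdot)]_+\|\le\sqrt n\|[g_{i,t}(\cdot)]_+\|$-type bookkeeping together with $\|[g_{i,t}(x)]_+ - [\hat g_{i,t}(x)]_+\|$ control from Lemma~\ref{lem1}. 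The right-hand side of \eqref{lemma5-eq2} is then exactly $2(\frac{1}{\gamma_1}+\sum_t(\beta_t+40\hat\varpi_3\alpha_t))$ times a regret-type quantity (the same collection of terms as in \eqref{lemma5-eq1}, up to the $n F_1 F_2$ factors coming from $\|[g_{i,t}(x)]_+\|\le F_1$ and the doubled constants), plus the closing negative quadratic in $\|q_{i,t}-\hat\mu^0_{ij}\|^2$.

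\medskip

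The main obstacle I anticipate is the Abel-summation step on the mixed telescoping block $\|\hat y_t-z_{i,t+1}\|^2 - \|\hat y_{t+1}-z_{i,t+2}\|^2 + \|\hat y_{t+1}-\hat x_{i,t+1}\|^2 - \|\hat y_t-x_{i,t+1}\|^2$ divided by $\alpha_{t+1}$: one must simultaneously (a) realign $z_{i,t+2}$ with $\hat x_{i,t+1}$ through the doubly-stochastic mixing $z_{i,t+2}=\sum_j[W_{t+1}]_{ij}\hat x_{j,t+1}$ and convexity of $\|\cdot\|^2$, (b) account for the event-triggering gap $\|\hat x_{i,t+1}-x_{i,t+1}\|\le\tau_{t+1}$ — this is precisely where the $2R(\mathbb{X})\sum_t\tau_{t+1}/\alpha_{t+1}$ term is born and is the genuinely new ingredient relative to \cite{Yi2023}, and (c) track the comparator drift and the shrinkage $\xi_t\to\xi_{t+1}$ without losing the sign of the $\varepsilon^x_{i,t}$ term that is needed downstream in Lemma~\ref{lem2}'s consensus estimate. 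Everything else is a careful but routine reorganization of the bounds already established in Lemmas~\ref{lem1}--\ref{lem4}.
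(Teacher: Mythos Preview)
Your plan is essentially the paper's, and part~(i) would go through; two corrections are worth flagging.

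First, the four-term block you call the ``main obstacle'' is simpler than you describe. The paper does \emph{not} treat the four pieces simultaneously or realign $z_{i,t+2}$ with $\hat x_{i,t+1}$ via mixing/convexity. It splits the block into two independent halves: the pure $z$-telescope $\|\hat y_t-z_{i,t+1}\|^2-\|\hat y_{t+1}-z_{i,t+2}\|^2$ is Abel-summed directly (note $\tfrac{1}{\alpha_{t+1}}-\tfrac{1}{\alpha_t}\ge 0$, not $\le 0$, so you pay $4R(\mathbb X)^2$ on those increments and get the boundary $4R(\mathbb X)^2/\alpha_{T+1}$), while the remaining pair $\|\hat y_{t+1}-\hat x_{i,t+1}\|^2-\|\hat y_t-x_{i,t+1}\|^2$ is bounded pointwise via $a^2-b^2=(a+b)(a-b)$ and $\|\hat y_{t+1}-\hat y_t\|\le\|y_{t+1}-y_t\|+R(\mathbb X)(\xi_t-\xi_{t+1})$ together with $\|\hat x_{i,t+1}-x_{i,t+1}\|\le\tau_{t+1}$; the shrinkage $x\mapsto(1-\xi_t)x$ is not a projection, so your ``non-expansiveness'' claim needs exactly this decomposition.

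Second, for \eqref{lemma5-eq2} your stated mechanism for exposing $[g_{i,t}(x_{j,t})]_+$ is off. Specializing the comparator $y_t$ does nothing here: you set $y_t=y$ fixed and feasible so that $[g_{i,t}(y_t)]_+=0$. The term $[g_{i,t}(x_{j,t})]_+$ appears instead by a Lipschitz swap on the $\mu_i^T[g_{i,t}(x_{i,t})]_+$ contribution from Lemma~\ref{lem3}, namely
\[
\mu_i^T[g_{i,t}(x_{i,t})]_+\;\ge\;\mu_i^T[g_{i,t}(x_{j,t})]_+-F_2\|\mu_i\|\bigl(\|\hat x_{i,t}-\bar x_t\|+\|\hat x_{j,t}-\bar x_t\|+2\tau_t\bigr),
\]
after which you sum over $j\in[n]$, define $h_{ij}(\mu_i)=\mu_i^T\sum_t[g_{i,t}(x_{j,t})]_+-\tfrac12\|\mu_i\|^2(\tfrac{1}{\gamma_1}+\sum_t(\beta_t+40\hat\varpi_3\alpha_t))$, and maximize at $\mu_i=\hat\mu^0_{ij}$. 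The Lemma~\ref{lem1} control on $[\hat g_{i,t}]_+$ is not the tool for this swap; it is plain Lipschitz continuity of $g_{i,t}$ and the event-triggering bound $\|\hat x_{i,t}-x_{i,t}\|\le\tau_t$. With these two fixes, your outline matches the paper's proof.
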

\begin{proof}
From \eqref{ass-eq1} and ${{\hat y}_t} = \left( {1 - {\xi _t}} \right){y_t}$, we have
\begin{flalign}
\nonumber
&\;\;\;\;\; {\| {{{\hat y}_{t + 1}} - {{\hat x}_{i,t + 1}}} \|^2} - {\| {{{\hat y}_t} - {x_{i,t + 1}}} \|^2} \\
\nonumber
& = \| {{{\hat y}_{t + 1}} - {{\hat x}_{i,t + 1}} + {{\hat y}_t} - {x_{i,t + 1}}} \|\| {{{\hat y}_{t + 1}} - {{\hat y}_t} + {x_{i,t + 1}} - {{\hat x}_{i,t + 1}}} \| \\
\nonumber
& \le 4R( \mathbb{X} )\| {{{\hat y}_{t + 1}} - {{\hat y}_t} + {x_{i,t + 1}} - {{\hat x}_{i,t + 1}}} \| \\
\nonumber
& \le 4R( \mathbb{X} )\| {{{\hat y}_{t + 1}} - {{\hat y}_t}} \| + 4R( \mathbb{X} )\| {{x_{i,t + 1}} - {{\hat x}_{i,t + 1}}} \| \\
\nonumber
& \le 4R( \mathbb{X} )\| {{{\hat y}_{t + 1}} - {{\hat y}_t}} \| + 4R( \mathbb{X} ){\tau _{t + 1}} \\
\nonumber
& = 4R( \mathbb{X} )\| {( {1 - {\xi _{t + 1}}} ){y_{t + 1}} - ( {1 - {\xi _t}} ){y_t}} \| + 4R( \mathbb{X} ){\tau _{t + 1}} \\
\nonumber
& = 4R( \mathbb{X} )\| {( {1 - {\xi _{t + 1}}} ){y_{t + 1}}} - ( {1 - {\xi _{t + 1}}} ){y_t} + ( {1 - {\xi _{t + 1}}} ){y_t}
  - ( {1 - {\xi _t}} ){y_t} \| + 4R( \mathbb{X} ){\tau _{t + 1}} \\
\nonumber
& = 4R( \mathbb{X} )\| {( {1 - {\xi _{t + 1}}} )( {{y_{t + 1}} - {y_t}} ) + ( {{\xi _t} - {\xi _{t + 1}}} ){y_t}} \|
 + 4R( \mathbb{X} ){\tau _{t + 1}} \\
& \le 4R( \mathbb{X} )\| {{y_{t + 1}} - {y_t}} \| + 4R{( \mathbb{X} )^2}( {{\xi _t} - {\xi _{t + 1}}} ) + 4R( \mathbb{X} ){\tau _{t + 1}}. \label{lemma5-proof-eq1}
\end{flalign}

From \eqref{lemma3-eq2}, \eqref{lemma4-eq1} and \eqref{lemma5-proof-eq1}, we have
\begin{flalign}
\nonumber
& \;\;\;\;\;\frac{1}{n}\sum\limits_{i = 1}^n {\big( {{\Delta _{i,t + 1}}( {{\mu _i}} ) + \mu _i^T{{[ {{g_{i,t}}( {{x_{i,t}}} )} ]}_ + } - \frac{1}{2}{\beta _{t + 1}}{{\| {{\mu _i}} \|}^2}} \big)}
+ \frac{1}{n}\sum\limits_{i = 1}^n {{f_t}( {{x_{i,t}}} )}  - {f_t}( {{y_t}} ) \\
\nonumber
& \le 2\hat \varpi _1^2{\gamma _{t + 1}} + 2{F_2}{\tau _t} + \frac{1}{n}\sum\limits_{i = 1}^n {{{\tilde \Delta }_{i,t + 1}}( {{\mu _i}} )}
+ \frac{1}{{2n{\alpha _{t + 1}}}}\sum\limits_{i = 1}^n {( {{{\| {{\hat{y}_t} - {z_{i,t + 1}}} \|}^2} - {{\| {{\hat{y}_{t + 1}} - {z_{i,t + 2}}} \|}^2}} )} \\
\nonumber
& \;\; + \frac{{2R( \mathbb{X} ){\tau _{t+1}}}}{{{\alpha _{t + 1}}}} + \frac{{2R( \mathbb{X} )}}{{{\alpha _{t + 1}}}}\| {{y_{t + 1}} - {y_t}} \| + \frac{{2R{{( \mathbb{X} )}^2}( {{\xi _t} - {\xi _{t + 1}}} )}}{{{\alpha _{t + 1}}}} \\
& \;\; + \frac{1}{n}\sum\limits_{i = 1}^n {{F_2}} \big( {R( \mathbb{X} ){\xi _t} + {\delta _t}} \big)\big( {\| {{q_{i,t}}} \| + 1} \big), \label{lemma5-proof-eq2}
\end{flalign}
where
\begin{flalign}
\nonumber
{{\tilde \Delta }_{i,t + 1}}( {{\mu _i}} ) &= p{F_2}( {\| {{\mu _i}} \| + 1} )\| {{x_{i,t}} - {x_{i,t + 1}}} \|
+ 2{F_2}\| {{{\hat x}_{i,t}} - {{\bar x}_t}} \| - \frac{1}{{2{\alpha _{t + 1}}}}{\| {\varepsilon _{i,t}^x} \|^2}.
\end{flalign}

From \eqref{ass-eq1} and $\{ {{\alpha _t}} \}$ is non-increasing, we have
\begin{flalign}
\nonumber
& \;\;\;\;\;\sum\limits_{t = 1}^T {\frac{1}{{{\alpha _{t + 1}}}}} ( {{{\| {{\hat{y}_t} - {z_{i,t + 1}}} \|}^2} - {{\| {{\hat{y}_{t + 1}} - {z_{i,t + 2}}} \|}^2}} ) \\
\nonumber
& = \sum\limits_{t = 1}^T \big( {\frac{1}{{{\alpha _t}}}{{\| {{\hat{y}_t} - {z_{i,t + 1}}} \|}^2} - \frac{1}{{{\alpha _{t + 1}}}}{{\| {{\hat{y}_{t + 1}} - {z_{i,t + 2}}} \|}^2}}
+ ( {\frac{1}{{{\alpha _{t + 1}}}} - \frac{1}{{{\alpha _t}}}} ){{\| {{\hat{y}_t} - {z_{i,t + 1}}} \|}^2} \big) \\
& \le \frac{1}{{{\alpha _1}}}{\| {{\hat{y}_1} - {z_{i,2}}} \|^2} - \frac{1}{{{\alpha _{T + 1}}}}{\| {{\hat{y}_{T + 1}} - {z_{i,T + 2}}} \|^2}
+ \sum\limits_{t = 1}^T {( {\frac{1}{{{\alpha _{t + 1}}}} - \frac{1}{{{\alpha _t}}}} )4R{( \mathbb{X} )^2}} \le \frac{{4R{( \mathbb{X} )^2}}}{{{\alpha _{T + 1}}}}. \label{lemma5-proof-eq3}
\end{flalign}

From \eqref{lemma5-proof-eq3} and $\{ {{\alpha _t}} \}$ is non-increasing, setting ${y_{T + 1}} = {y_T}$ and ${\mu _i} = {\mathbf{0}_{{m_i}}}$, summing \eqref{lemma5-proof-eq2} over $t \in [ T ]$ gives
\begin{flalign}
\nonumber
& \;\;\;\;\;{{\rm{Net}\mbox{-}\rm{Reg}}( {\{ {{x_{i,t}}} \},{y_{[ T ]}}} )}  + \frac{1}{n}\sum\limits_{t = 1}^T {\sum\limits_{i = 1}^n {{\Delta _{i,t + 1}}( {{\mathbf{0}_{{m_i}}}} )} }  \\
\nonumber
& \le {2\hat \varpi _1^2}\sum\limits_{t = 1}^T {{\gamma _{t + 1}}} + \sum\limits_{t = 1}^T {2{F_2}{\tau _t}} + \frac{1}{n}\sum\limits_{t = 1}^T {\sum\limits_{i = 1}^n {{{\tilde \Delta }_{i,t + 1}}( {{\mathbf{0}_{{m_i}}}} )} }
+ \sum\limits_{t = 1}^T {\frac{{2R( \mathbb{X} ){\tau _{t+1}}}}{{{\alpha _{t + 1}}}}}  + \frac{{2R{{( \mathbb{X} )}^2}}}{{{\alpha _{T+1}}}} + \frac{{2R( \mathbb{X} )}}{{{\alpha _T}}}{P_T}  \\
& \;\; + 2R{{( \mathbb{X} )}^2}\sum\limits_{t = 1}^T {\frac{ {\xi _t} - {\xi _{t + 1}} }{{{\alpha _{t + 1}}}}}
+ \sum\limits_{t = 1}^T {{F_2}\big( {R( \mathbb{X} ){\xi _t} + {\delta _t}} \big)( {\frac{{{{\hat \varpi }_1}}}{{{\beta _t}}} + 1} )}. \label{lemma5-proof-eq4}
\end{flalign}

We then establish a lower bound for network regret.

For any $T \in {\mathbb{N}_ + }$, we have
\begin{flalign}
 \sum\limits_{t = 1}^T {{\Delta _{i,t + 1}}( {{\mu _i}} )}
& = \frac{1}{2}\sum\limits_{t = 1}^T {( {\frac{1}{{{\gamma _t}}} - \frac{1}{{{\gamma _{t + 1}}}} + {\beta _{t + 1}}} ){{\| {{q_{i,t}} - {\mu _i}} \|}^2}}
+ \frac{{{{\| {{q_{i,T + 1}} - {\mu _i}} \|}^2}}}{{2{\gamma _{T + 1}}}} - \frac{{{{\| {{\mu _i}} \|}^2}}}{{2{\gamma _1}}}. \label{lemma5-proof-eq5}
\end{flalign}

Substituting ${\mu _i} = {\mathbf{0}_{{m_i}}}$ into \eqref{lemma5-proof-eq5} yields
\begin{flalign}
\sum\limits_{t = 1}^T {{\Delta _{i,t + 1}}( {{\mathbf{0}_{{m_i}}}} )}
\ge \frac{1}{2}\sum\limits_{t = 1}^T {( {\frac{1}{{{\gamma _t}}} - \frac{1}{{{\gamma _{t + 1}}}} + {\beta _{t + 1}}} ){{\| {{q_{i,t}}} \|}^2}}. \label{lemma5-proof-eq6}
\end{flalign}

We have
\begin{flalign}
\sum\limits_{t = 1}^T {\sum\limits_{s = 1}^{t - 2} {{\lambda ^{t - s - 2}}\sum\limits_{j = 1}^n {\| {\hat{\varepsilon} _{j,s}^x} \|} } }  & = \sum\limits_{t = 1}^{T - 2} {\sum\limits_{j = 1}^n {\| {\hat{\varepsilon} _{j,t}^x} \|\sum\limits_{s = 0}^{T - t - 2} {{\lambda ^s}} } }  \le \frac{1}{{1 - \lambda }}\sum\limits_{t = 1}^{T - 2} {\sum\limits_{j = 1}^n {\| {\hat{\varepsilon} _{j,t}^x} \|} }, \label{lemma5-proof-eq7}
\end{flalign}
and
\begin{flalign}
\| {\hat \varepsilon _{i,t - 1}^x} \| &= \| {{{\hat x}_{i,t}} - {x_{i,t}} + {x_{i,t}} - {z_{i,t}}} \| \le \| {\varepsilon _{i,t - 1}^x} \| + {\tau _t}, \label{lemma5-proof-eq8}
\end{flalign}
where ${\varepsilon _{i,t - 1}^x} = {x_{i,t}} - {z_{i,t}}$.

From \eqref{lemma2-eq1}, \eqref{lemma5-proof-eq7} and \eqref{lemma5-proof-eq8}, we have
\begin{flalign}
\nonumber
 \sum\limits_{t = 1}^T {\| {{{\hat x}_{i,t}} - {{\bar x}_t}} \|}
&\le {\varpi _2} + \frac{1}{n}\sum\limits_{t = 1}^T {\sum\limits_{j = 1}^n {\| {\varepsilon _{j,t - 1}^x} \|} }  + \sum\limits_{t = 1}^T {\| {\varepsilon _{i,t - 1}^x} \|} + 2\sum\limits_{t = 1}^T {{\tau _t}}\\
& \;\;+ \frac{\tau }{{1 - \lambda }}\sum\limits_{t = 1}^{T - 2} {\sum\limits_{j = 1}^n {\| {\varepsilon _{j,t}^x} \|} }    + \frac{{n\tau }}{{1 - \lambda }}\sum\limits_{t = 1}^{T - 2} {{\tau _{t + 1}}}. \label{lemma5-proof-eq9}
\end{flalign}

For any ${\mu _i} \in {\mathbb{R}^{{m_i}}}$ and $a > 0$, it follows from \eqref{lemma5-proof-eq9} that
\begin{flalign}
\nonumber
& \;\;\;\;\;\sum\limits_{t = 1}^T {\sum\limits_{i = 1}^n {\| {{\mu _i}} \|\| {{{\hat x}_{i,t}} - {{\bar x}_t}} \|} } \\
\nonumber
& \le {\varpi _2}\sum\limits_{i = 1}^n {\| {{\mu _i}} \|}
+ \frac{1}{n}\sum\limits_{t = 2}^T {\sum\limits_{i = 1}^n {\sum\limits_{j = 1}^n {( {\frac{1}{{4ap{F_2}{\alpha _t}}}{{\| {\varepsilon _{i,t - 1}^x} \|}^2} + ap{F_2}{\alpha _t}{{\| {{\mu _j}} \|}^2}} )} } } \\
\nonumber
&\;\; + \sum\limits_{t = 2}^T {\sum\limits_{i = 1}^n {( {\frac{1}{{4ap{F_2}{\alpha _t}}}{{\| {\varepsilon _{i,t - 1}^x} \|}^2} + ap{F_2}{\alpha _t}{{\| {{\mu _i}} \|}^2}} )} } \\
\nonumber
&\;\; + \sum\limits_{t = 2}^T {\sum\limits_{i = 1}^n {\sum\limits_{j = 1}^n {( {\frac{1}{{2anp{F_2}{\alpha _t}}}{{\| {\varepsilon _{i,t - 1}^x} \|}^2} + \frac{{anp{F_2}{\tau ^2}{\alpha _t}}}{{2{{( {1 - \lambda } )}^2}}}{{\| {{\mu _j}} \|}^2}} )} } } \\
\nonumber
&\;\; + \sum\limits_{t = 1}^T {\sum\limits_{i = 1}^n {2{\tau _t}\| {{\mu _i}} \|} }  + \frac{{n\tau }}{{1 - \lambda }}\sum\limits_{t = 2}^T {\sum\limits_{i = 1}^n {{\tau _t}\| {{\mu _i}} \|} } \\
& \le {\varpi _2}\sum\limits_{i = 1}^n {\| {{\mu _i}} \|} + \sum\limits_{t = 2}^T {\sum\limits_{i = 1}^n {\frac{1}{{ap{F_2}{\alpha _t}}}{{\| {\varepsilon _{i,t - 1}^x} \|}^2}} }
+ \sum\limits_{t = 2}^T {\sum\limits_{i = 1}^n {ap{\varpi _3}{\alpha _t}{{\| {{\mu _i}} \|}^2}} } + {\varpi _4}\sum\limits_{t = 1}^T {\sum\limits_{i = 1}^n {{\tau _t}\| {{\mu _i}} \|} }. \label{lemma5-proof-eq10}
\end{flalign}

For any ${\mu _i} \in {\mathbb{R}^{{m_i}}}$ and $a > 0$, we have
\begin{flalign}
\nonumber
& \;\;\;\;\;\| {{\mu _i}} \|\| {{x_{i,t}} - {x_{i,t + 1}}} \| \\
\nonumber
& \le \| {{\mu _i}} \|\| {{x_{i,t}} - {z_{i,t + 1}}} \| + \| {{\mu _i}} \|\| {{z_{i,t + 1}} - {x_{i,t + 1}}} \| \\
& \le \| {{\mu _i}} \|\| {{x_{i,t}} - {z_{i,t + 1}}} \|
+ \frac{1}{{ap{F_2}{\alpha _{t + 1}}}}{\| {\varepsilon _{i,t}^x} \|^2} + \frac{{ap{F_2}{\alpha _{t + 1}}}}{4}{\| {{\mu _i}} \|^2}. \label{lemma5-proof-eq11}
\end{flalign}

From \eqref{Algorithm1-eq1} and ${\sum\nolimits_{i = 1}^n {[ {{W_t}} ]} _{ij}} = {\sum\nolimits_{j = 1}^n {[ {{W_t}} ]} _{ij}} = 1$, we have
\begin{flalign}
\nonumber
& \;\;\;\;\;\sum\limits_{i = 1}^n {\| {{x_{i,t}} - {z_{i,t + 1}}} \|} \\
\nonumber
& \le \sum\limits_{i = 1}^n {( {\| {{x_{i,t}} - {{\bar x}_t}} \| + \| {{{\bar x}_t} - {z_{i,t + 1}}} \|} )} \\
\nonumber
& \le \sum\limits_{i = 1}^n {( {\| {{x_{i,t}} - {{\hat x}_{i,t}} + {{\hat x}_{i,t}} - {{\bar x}_t}} \| + \| {{{\bar x}_t} - \sum\limits_{j = 1}^n {{{[ {{W_t}} ]}_{ij}}{{\hat x}_{j,t}}} } \|} )} \\
\nonumber
& \le \sum\limits_{i = 1}^n {( {\| {{{\hat x}_{i,t}} - {x_{i,t}}} \| + \| {{{\hat x}_{i,t}} - {{\bar x}_t}} \|} )}
+ \sum\limits_{i = 1}^n {\sum\limits_{j = 1}^n {{{[ {{W_t}} ]}_{ij}}\| {{{\bar x}_t} - {{\hat x}_{j,t}} } \|} } \\
& \le 2\sum\limits_{i = 1}^n {\| {{{\hat x}_{i,t}} - {{\bar x}_t}} \|}  + \sum\limits_{i = 1}^n {\tau _t}. \label{lemma5-proof-eq12}
\end{flalign}
From \eqref{lemma5-proof-eq10}--\eqref{lemma5-proof-eq12}, for any ${\mu _i} \in {\mathbb{R}^{{m_i}}}$ and $a > 0$, we have
\begin{flalign}
\nonumber
& \;\;\;\;\;\sum\limits_{t = 1}^T {\sum\limits_{i = 1}^n {p{F_2}\| {{\mu _i}} \|\| {{x_{i,t}} - {x_{i,t + 1}}} \|} } \\
\nonumber
& \le \sum\limits_{t = 1}^T {\sum\limits_{i = 1}^n {2p{F_2}\| {{\mu _i}} \|\| {{\hat{x}_{i,t}} - {{\bar x}_t}} \|} } + p{F_2}\sum\limits_{t = 1}^T {\sum\limits_{i = 1}^n {{\tau _t}\| {{\mu _i}} \|} } + \sum\limits_{t = 1}^T {\sum\limits_{i = 1}^n {\frac{1}{{a{\alpha _{t + 1}}}}{{\| {\varepsilon _{i,t}^x} \|}^2}} } \\
\nonumber
&\;\;   + \sum\limits_{t = 1}^T {\sum\limits_{i = 1}^n {\frac{{ap^2F_2^2{\alpha _{t + 1}}}}{4}{{\| {{\mu _i}} \|}^2}} } \\
\nonumber
& \le 2p{F_2}{\varpi _2}\sum\limits_{i = 1}^n {\| {{\mu _i}} \|}  + ( {2{\varpi _4} + 1} )p{F_2}\sum\limits_{t = 1}^T {\sum\limits_{i = 1}^n {{\tau _t}\| {{\mu _i}} \|} } + \sum\limits_{t = 1}^T {\sum\limits_{i = 1}^n {ap^2{\varpi _5}{\alpha _{t + 1}}{{\| {{\mu _i}} \|}^2}} }\\
& \;\; + \sum\limits_{t = 1}^T {\sum\limits_{i = 1}^n {\frac{3}{{a{\alpha _{t + 1}}}}{{\| {\varepsilon _{i,t}^x} \|}^2}} }. \label{lemma5-proof-eq13}
\end{flalign}

Choosing $\| {{\mu _i}} \| = 1$ in \eqref{lemma5-proof-eq10} and \eqref{lemma5-proof-eq13} yields
\begin{flalign}
\nonumber
& \;\;\;\;\;\sum\limits_{t = 1}^T {\sum\limits_{i = 1}^n {2{F_2}\| {{{\hat x}_{i,t}} - {{\bar x}_t}} \|} } \\
& \le 2n{F_2}{\varpi _2} + 2n{F_2}{\varpi _4}\sum\limits_{t = 1}^T {{\tau _t}}
+ \sum\limits_{t = 2}^T {\sum\limits_{i = 1}^n {( {2a{F_2}{\varpi _3}{\alpha _t} + \frac{2}{{a{\alpha _t}}}{{\| {\varepsilon _{i,t - 1}^x} \|}^2}} )} }, \label{lemma5-proof-eq14}
\end{flalign}
and
\begin{flalign}
\nonumber
& \;\;\;\;\;\sum\limits_{t = 1}^T {\sum\limits_{i = 1}^n {p{F_2}\| {{x_{i,t}} - {x_{i,t + 1}}} \|} } \\
& \le 2np{F_2}{\varpi _2} + ( {2{\varpi _4} + 1} )np{F_2}\sum\limits_{t = 1}^T {{\tau _t}} + an{p^2}{\varpi _5}\sum\limits_{t = 1}^T {\alpha _{t + 1}}
+ \sum\limits_{t = 1}^T {\sum\limits_{i = 1}^n {\frac{3}{{a{\alpha _{t + 1}}}}{{\left\| {\varepsilon _{i,t}^x} \right\|}^2}} }. \label{lemma5-proof-eq15}
\end{flalign}

From \eqref{lemma5-proof-eq13}--\eqref{lemma5-proof-eq15}, and choosing $a = 10$ yields
\begin{flalign}
\nonumber
& \;\;\;\;\;\sum\limits_{t = 1}^T {\sum\limits_{i = 1}^n {{{\tilde \Delta }_{i,t + 1}}( {{\mathbf{0}_{{m_i}}}} )} } \\
\nonumber
& \le 2np{F_2}{\varpi _2} + ( {2{\varpi _4} + 1} )np{F_2}\sum\limits_{t = 1}^T {{\tau _t}}  + 10n{p^2}{\varpi _5}\sum\limits_{t = 1}^T {{\alpha _{t + 1}}} + \sum\limits_{t = 1}^T {\sum\limits_{i = 1}^n {\frac{3}{{10{\alpha _{t + 1}}}}{{\| {\varepsilon _{i,t}^x} \|}^2}} }  + 2n{F_2}{\varpi _2}\\
\nonumber
& \;\;  + 2n{F_2}{\varpi _4}\sum\limits_{t = 1}^T {{\tau _t}}
 + 20n{F_2}{\varpi _3}\sum\limits_{t = 2}^T {{\alpha _t}}  + \sum\limits_{t = 2}^T {\sum\limits_{i = 1}^n {\frac{2}{{10{\alpha _t}}}{{\| {\varepsilon _{i,t - 1}^x} \|}^2}} }  - \frac{1}{{2{\alpha _{t + 1}}}}{\| {\varepsilon _{i,t}^x} \|^2} \\
&  \le 2( {p + 1} )n{F_2}{\varpi _2} + n{F_2}{\hat \varpi _2}\sum\limits_{t = 1}^T {{\tau _t}}  + 10n{\hat \varpi _3}\sum\limits_{t = 1}^T {{\alpha _t}} . \label{lemma5-proof-eq16}
\end{flalign}

Combining \eqref{lemma5-proof-eq4}, \eqref{lemma5-proof-eq6} and \eqref{lemma5-proof-eq16} yields \eqref{lemma5-eq1}.

($\mathbf{ii}$)
We first provide a loose bound for network cumulative constraint violation.

We have
\begin{flalign}
\nonumber
\mu _i^T{[ {{g_{i,t}}( {{x_{i,t}}} )} ]_ + }
&= \mu _i^T{[ {{g_{i,t}}( {{x_{j,t}}} )} ]_ + } + \mu _i^T{[ {{g_{i,t}}( {{x_{i,t}}} )} ]_ + } - \mu _i^T{[ {{g_{i,t}}( {{x_{j,t}}} )} ]_ + } \\
\nonumber
& \ge \mu _i^T{[ {{g_{i,t}}( {{x_{j,t}}} )} ]_ + } - \| {{\mu _i}} \|\| {{{[ {{g_{i,t}}( {{x_{i,t}}} )} ]}_ + } - {{[ {{g_{i,t}}( {{x_{j,t}}} )} ]}_ + }} \| \\
\nonumber
& \ge \mu _i^T{[ {{g_{i,t}}( {{x_{j,t}}} )} ]_ + } - \| {{\mu _i}} \|\| {{g_{i,t}}( {{x_{i,t}}} ) - {g_{i,t}}( {{x_{j,t}}} )} \| \\
\nonumber
& \ge \mu _i^T{[ {{g_{i,t}}( {{x_{j,t}}} )} ]_ + } - {F_2}\| {{\mu _i}} \|\| {{x_{i,t}} - {x_{j,t}}} \| \\
\nonumber
& \ge \mu _i^T{[ {{g_{i,t}}( {{x_{j,t}}} )} ]_ + }
- {F_2}\| {{\mu _i}} \|\| {{{\hat x}_{i,t}} - {x_{i,t}}} \| - {F_2}\| {{\mu _i}} \|\| {{{\hat x}_{i,t}} - {{\bar x}_t}} \| \\
& \;\; - {F_2}\| {{\mu _i}} \|\| {{{\hat x}_{j,t}} - {x_{j,t}}} \| - {F_2}\| {{\mu _i}} \|\| {{{\hat x}_{j,t}} - {{\bar x}_t}} \|, \label{lemma5-proof-eq17}
\end{flalign}
where the second inequality holds since projection operator is non-expansive, and the third inequality holds due to \eqref{lemma4-proof-eq1b}.

Combining \eqref{lemma5-proof-eq2} and \eqref{lemma5-proof-eq17}, setting ${y_t} = y$, and summing over $j \in [ n ]$ yields
\begin{flalign}
\nonumber
& \;\;\;\;\;\sum\limits_{i = 1}^n {\big( {{\Delta _{i,t + 1}}( {{\mu _i}} ) + \frac{1}{n}\sum\limits_{j = 1}^n {\mu _i^T{{[ {{g_{i,t}}( {{x_{j,t}}} )} ]}_ + }}  - \frac{1}{2}{\beta _{t + 1}}{{\| {{\mu _i}} \|}^2}} \big)}
+ \sum\limits_{i = 1}^n {{f_t}( {{x_{i,t}}} )}  - n{f_t}( y ) \\
\nonumber
& \le 2n\hat \varpi _1^2{\gamma _{t + 1}} + \sum\limits_{i = 1}^n {{{\hat \Delta }_{i,t + 1}}\left( {{\mu _i}} \right)}  + \frac{1}{n}{\check{\Delta} _t} + 2n{F_2}{\tau _t}
+ \frac{{2nR( \mathbb{X} ){\tau _{t + 1}}}}{{{\alpha _{t + 1}}}} + \frac{{2nR{{( \mathbb{X} )}^2}\left( {{\xi _t} - {\xi _{t + 1}}} \right)}}{{{\alpha _{t + 1}}}} \\
& \;\; + \frac{1}{{2{\alpha _{t + 1}}}}\sum\limits_{i = 1}^n {( {{{\| {{{\hat y}_t} - {z_{i,t + 1}}} \|}^2} - {{\| {{{\hat y}_{t + 1}} - {z_{i,t + 2}}} \|}^2}} )}  + \sum\limits_{i = 1}^n {{F_2}( {R( \mathbb{X} ){\xi _t} + {\delta _t}} )( {\| {{q_{i,t}}} \| + 1} )}, \label{lemma5-proof-eq18}
\end{flalign}
where
\begin{flalign}
\nonumber
{{\hat \Delta }_{i,t + 1}}( {{\mu _i}} ) &= {{\tilde \Delta }_{i,t + 1}}( {{\mu _i}} ) + {F_2}\| {{\mu _i}} \|\| {{{\hat x}_{i,t}} - {{\bar x}_t}} \| + {F_2}{\tau _t}\| {{\mu _i}} \|, \\
\nonumber
{\check{\Delta} _t} &= \sum\limits_{i = 1}^n {\sum\limits_{j = 1}^n {{F_2}\| {{\mu _i}} \|\| {{{\hat x}_{j,t}} - {{\bar x}_t}} \|} }  + \sum\limits_{i = 1}^n {n{F_2}{\tau _t}\| {{\mu _i}} \|}.
\end{flalign}

Similar to the way to \eqref{lemma5-proof-eq10}, we have
\begin{flalign}
\nonumber
& \;\;\;\;\;\sum\limits_{t = 1}^T {\sum\limits_{i = 1}^n {F_2\| {{\mu _i}} \|\| {{{\hat x}_{i,t}} - {{\bar x}_t}} \|} } \\
& \le {F_2}{\varpi _2}\sum\limits_{i = 1}^n {\| {{\mu _i}} \|} + \sum\limits_{t = 2}^T {\sum\limits_{i = 1}^n {\frac{1}{{a{\alpha _t}}}{{\| {\varepsilon _{i,t - 1}^x} \|}^2}} }
+ \sum\limits_{t = 2}^T {\sum\limits_{i = 1}^n {a{F_2}{\varpi _3}{\alpha _t}{{\| {{\mu _i}} \|}^2}} } + {F_2}{\varpi _4}\sum\limits_{t = 1}^T {\sum\limits_{i = 1}^n {{\tau _t}\| {{\mu _i}} \|} }. \label{lemma5-proof-eq19}
\end{flalign}

Combining \eqref{lemma5-proof-eq10}, \eqref{lemma5-proof-eq13}--\eqref{lemma5-proof-eq15}, \eqref{lemma5-proof-eq19}, and choosing $a = 20$ yields
\begin{flalign}
\nonumber
& \;\;\;\;\;\sum\limits_{t = 1}^T {\sum\limits_{i = 1}^n {{{\hat \Delta }_{i,t + 1}}( {{\mu _i}} )} } \\
\nonumber
& \le 2( {p + 1} )n{F_2}{\varpi _2} + 20n{{\hat \varpi }_3}\sum\limits_{t = 1}^T {{\alpha _t}} + n{F_2}{{\hat \varpi }_2}\sum\limits_{t = 1}^T {{\tau _t}}
+ ( {2p + 1} ){F_2}{\varpi _2}\sum\limits_{i = 1}^n {\| {{\mu _i}} \|}  \\
\nonumber
& \;\; + 20( {{p^2}{\varpi _5} + {F_2}{\varpi _3}} )\sum\limits_{t = 1}^T {\sum\limits_{i = 1}^n {{\alpha _t}{{\| {{\mu _i}} \|}^2}} } + {F_2}( {2p{\varpi _4} + p + {\varpi _4} + 1} )\sum\limits_{t = 1}^T {\sum\limits_{i = 1}^n {{\tau _t}\| {{\mu _i}} \|} } \\
& \;\; - \sum\limits_{t = 1}^T {\sum\limits_{i = 1}^n {\frac{1}{{20{\alpha _{t + 1}}}}} } {\| {\varepsilon _{i,t}^x} \|^2}. \label{lemma5-proof-eq20}
\end{flalign}

From \eqref{lemma5-proof-eq9}, we have
\begin{flalign}
\nonumber
& \;\;\;\;\;\sum\limits_{t = 1}^T {\sum\limits_{i = 1}^n {\sum\limits_{j = 1}^n {\| {{\mu _j}} \|\| {{{\hat x}_{i,t}} - {{\bar x}_t}} \|} } } \\
\nonumber
& \le n{\varpi _2}\sum\limits_{j = 1}^n {\| {{\mu _j}} \|}  + 2\sum\limits_{t = 2}^T {\sum\limits_{i = 1}^n {\sum\limits_{j = 1}^n {\| {\varepsilon _{i,t - 1}^x} \|\| {{\mu _j}} \|} } } + \frac{{n\tau }}{{1 - \lambda }}\sum\limits_{t = 2}^T {\sum\limits_{i = 1}^n {\sum\limits_{j = 1}^n {\| {\varepsilon _{i,t - 1}^x} \|\| {{\mu _j}} \|} } }\\
\nonumber
&  \;\;  + n{\varpi _4}\sum\limits_{t = 1}^T {\sum\limits_{j = 1}^n {{\tau _t}\| {{\mu _j}} \|} } \\
\nonumber
& \le n{\varpi _2}\sum\limits_{j = 1}^n {\| {{\mu _j}} \|} + \sum\limits_{t = 2}^T {\sum\limits_{i = 1}^n {\sum\limits_{j = 1}^n {( {\frac{1}{{2a{F_2}{\alpha _t}}}{{\| {\varepsilon _{i,t - 1}^x} \|}^2} + 2a{F_2}{\alpha _t}{{\| {{\mu _j}} \|}^2}} )} } } \\
\nonumber
&  \;\;+ \sum\limits_{t = 2}^T {\sum\limits_{i = 1}^n {\sum\limits_{j = 1}^n {( {\frac{1}{{2a{F_2}{\alpha _t}}}{{\| {\varepsilon _{i,t - 1}^x} \|}^2} + \frac{{a{n^2}{F_2}{\tau ^2}{\alpha _t}}}{{2{{( {1 - \lambda } )}^2}}}{{\| {{\mu _j}} \|}^2}} )} } } + n{\varpi _4}\sum\limits_{t = 1}^T {\sum\limits_{j = 1}^n {{\tau _t}\| {{\mu _j}} \|} } \\
\nonumber
& = n{\varpi _2}\sum\limits_{i = 1}^n {\| {{\mu _i}} \|}  + \sum\limits_{t = 2}^T {\sum\limits_{i = 1}^n {an{\varpi _3}{\alpha _t}{{\| {{\mu _i}} \|}^2}} } + \sum\limits_{t = 2}^T {\sum\limits_{i = 1}^n {\frac{n}{{a{F_2}{\alpha _t}}}} } {\| {\varepsilon _{i,t - 1}^x} \|^2}\\
& \;\; + n{\varpi _4}\sum\limits_{t = 1}^T {\sum\limits_{i = 1}^n {{\tau _t}\| {{\mu _i}} \|} }. \label{lemma5-proof-eq21}
\end{flalign}

Choosing $a = 20$ in \eqref{lemma5-proof-eq21} yields
\begin{flalign}
\nonumber
&\;\;\;\;\;\frac{1}{n}\sum\limits_{t = 1}^T {{\check{\Delta} _t}} \\
&\le {F_2}{\varpi _2}\sum\limits_{i = 1}^n {\| {{\mu _i}} \|}  + \sum\limits_{t = 1}^T {\sum\limits_{i = 1}^n {20{F_2}{\varpi _3}{\alpha _t}{{\| {{\mu _i}} \|}^2}} }
+ \sum\limits_{t = 2}^T {\sum\limits_{i = 1}^n {\frac{1}{{20{\alpha _t}}}{{\| {\varepsilon _{i,t - 1}^x} \|}^2}} }  + {F_2}{\varpi _8}\sum\limits_{t = 1}^T {\sum\limits_{i = 1}^n {{\tau _t}\| {{\mu _i}} \|} }. \label{lemma5-proof-eq22}
\end{flalign}

Let ${h_{ij}}:\mathbb{R}_ + ^{{m_i}} \to \mathbb{R}$ be a function defined as
\begin{flalign}
{h_{ij}}( {{\mu _i}} ) &= \mu _i^T\sum\limits_{t = 1}^T {{{[ {{g_{i,t}}( {{x_{j,t}}} )} ]}_ + }}
- \frac{1}{2}{\| {{\mu _i}} \|^2}\big( {\frac{1}{{{\gamma _1}}} + \sum\limits_{t = 1}^T {( {{\beta _t} + 40{{\hat \varpi }_3}{\alpha _t}} )} } \big). \label{lemma5-proof-eq23}
\end{flalign}

From \eqref{lemma5-proof-eq3}, \eqref{lemma5-proof-eq5}, \eqref{lemma5-proof-eq20}, \eqref{lemma5-proof-eq22}, and \eqref{lemma5-proof-eq23}, summing \eqref{lemma5-proof-eq18} over $t \in [ T ]$ gives
\begin{flalign}
\nonumber
& \;\;\;\;\;\frac{1}{2}\sum\limits_{t = 1}^T {\sum\limits_{i = 1}^n {( {\frac{1}{{{\gamma _t}}} - \frac{1}{{{\gamma _{t + 1}}}} + {\beta _{t + 1}}} ){{\| {{q_{i,t}} - {\mu _i}} \|}^2}} }
+ \frac{1}{n}\sum\limits_{i = 1}^n {\sum\limits_{j = 1}^n {{h_{ij}}( {{\mu _i}} )} } + n{{\rm{Net}\mbox{-}\rm{Reg}}( {\{ {{x_{i,t}}} \},{y_{[T]}}} )} \\
\nonumber
&  \le 2( {p + 1} )n{F_2}{\varpi _2} + 2n\hat \varpi _1^2\sum\limits_{t = 1}^T {{\gamma _t}}  + 20n{{\hat \varpi }_3}\sum\limits_{t = 1}^T {{\alpha _t}} + n{F_2}( {{{\hat \varpi }_2} + 2} )\sum\limits_{t = 1}^T {{\tau _t}}  \\
\nonumber
&  \;\; + 2( {p + 1} ){F_2}{\varpi _2}\sum\limits_{i = 1}^n {\| {{\mu _i}} \|}  + {F_2}( {{{\hat \varpi }_2} + 2} )\sum\limits_{t = 1}^T {\sum\limits_{i = 1}^n {{\tau _t}\| {{\mu _i}} \|} }  + 2nR( \mathbb{X} )\sum\limits_{t = 1}^T {\frac{{{\tau _{t + 1}}}}{{{\alpha _{t + 1}}}}} + \frac{{2nR{{( \mathbb{X} )}^2}}}{{{\alpha _{T + 1}}}}\\
&  \;\; + \sum\limits_{t = 1}^T {\frac{{2nR{{( \mathbb{X} )}^2}( {{\xi _t} - {\xi _{t + 1}}} )}}{{{\alpha _{t + 1}}}}} + \sum\limits_{t = 1}^T {n{F_2}( {R( \mathbb{X} ){\xi _t} + {\delta _t}} )( {\frac{{{{\hat \varpi }_1}}}{{{\beta _t}}} + 1} )}. \label{lemma5-proof-eq24}
\end{flalign}

Substituting ${\mu _i} = \mu _{ij}^0$ into \eqref{lemma5-proof-eq23} yields
\begin{flalign}
{h_{ij}}\left( {\mu _{ij}^0} \right) = \frac{{{{\| {\sum\nolimits_{t = 1}^T {{{[ {{g_{i,t}}( {{x_{j,t}}} )} ]}_ + }} } \|}^2}}}{{2\big( {\frac{1}{{{\gamma _1}}} + \sum\nolimits_{t = 1}^T {( {{\beta _t} + 40{{\hat \varpi }_3}{\alpha _t}} )} } \big)}}. \label{lemma5-proof-eq25}
\end{flalign}

From ${g_t}( x ) = {\rm{col}}\big( {{g_{1,t}}( x ), \cdot  \cdot  \cdot ,{g_{n,t}}( x )} \big)$, we have
\begin{flalign}
\sum\limits_{i = 1}^n {\sum\limits_{j = 1}^n {{{\| \sum\limits_{t = 1}^T {{{[ {{g_{i,t}}( {{x_{j,t}}} )} ]}_ + }} } \|^2}} }  = \sum\limits_{j = 1}^n {{\| {\sum\limits_{t = 1}^T {{{[ {{g_t}( {{x_{j,t}}} )} ]}_ + }} } \|^2}}. \label{lemma5-proof-eq26}
\end{flalign}

From \eqref{ass-eq2a}, we have
\begin{flalign}
-{{\rm{Net}\mbox{-}\rm{Reg}}( {\{ {{x_{i,t}}} \},{y_{[T]}}} )} \le {F_1}T. \label{lemma5-proof-eq27}
\end{flalign}

From \eqref{ass-eq2b}, we have
\begin{flalign}
\| {\mu _{ij}^0} \| \le \frac{{{F_1}T}}{{\frac{1}{{{\gamma _1}}} + \sum\nolimits_{t = 1}^T {( {{\beta _t} + 40{{\hat \varpi }_3}{\alpha _t}} )} }}. \label{lemma5-proof-eq28}
\end{flalign}

Substituting ${\mu _i} = \mu _{ij}^0$ into \eqref{lemma5-proof-eq24}, combining \eqref{lemma5-proof-eq25}--\eqref{lemma5-proof-eq28} yields \eqref{lemma5-eq2}.

\hspace{-3mm}\emph{B. Proof of Theorem 1}

Based on Lemma~5, we are now ready to prove Theorem~1.

($\mathbf{i}$)
For any constant $a \in \left[ {0,1} \right)$ and $T \in {\mathbb{N}_ + }$, it holds that
\begin{flalign}
\sum\limits_{t = 1}^T {\frac{1}{{{t^a}}}}  \le 1 + \int\limits_1^T {\frac{1}{{{t^a}}}} dt = \frac{{{T^{1 - a}} - a}}{{1 - a}} \le \frac{{{T^{1 - a}}}}{{1 - a}}. \label{theorem1-proof-eq1}
\end{flalign}

Form \eqref{theorem1-proof-eq1}, we have
\begin{flalign}
\sum\limits_{t = 1}^T {\sqrt {\frac{{{\Psi _t}}}{t}} }  \le \sqrt {{\Psi _T}} \sum\limits_{t = 1}^T {\frac{1}{{\sqrt t }}}  \le 2\sqrt {T{\Psi _T}}. \label{theorem1-proof-eq2}
\end{flalign}

From Cauchy--Schwarz inequality, we have
\begin{flalign}
\sum\limits_{t = 1}^T {\frac{{{\tau _{t + 1}}}}{{\sqrt {\frac{{{\Psi _{t + 1}}}}{{t + 1}}} }}}  \le \sum\limits_{t = 1}^T {\sqrt {{\tau _{t + 1}}} }  \le \sum\limits_{t = 1}^T {\sqrt {{\tau _t}} }  \le \sqrt {T{\Psi _T}}. \label{theorem1-proof-eq3}
\end{flalign}

From \eqref{theorem1-eq1}, we have
\begin{flalign}
\frac{t}{{{t^\kappa }}} - \frac{{t + 1}}{{{{\left( {t + 1} \right)}^\kappa }}} + \frac{1}{{{(t+1)^\kappa }}} = \frac{{t}}{{{t^\kappa }}} - \frac{{t}}{{{{( {t + 1} )}^\kappa }}} > 0. \label{theorem1-proof-eq4}
\end{flalign}

For any $T \in {\mathbb{N}_ + }$, there exists a constant $H > 0$ such that
\begin{flalign}
\sum\limits_{t = 1}^T {( {\frac{1}{{t + 1}} - \frac{1}{{t + 2}}} )\sqrt {\frac{t+1}{{{\Psi _{t+1}}}}} }  \le H\sqrt {\frac{1}{{{\Psi _2}}}}. \label{theorem1-proof-eq5}
\end{flalign}
Combining \eqref{theorem1-eq1}, \eqref{lemma5-eq1}, \eqref{theorem1-proof-eq1}--\eqref{theorem1-proof-eq5} yields
\begin{flalign}
\nonumber
& \;\;\;\;\;\mathbf{E}[{{\rm{Net}\mbox{-}\rm{Reg}}( {\{ {{x_{i,t}}} \},{y_{[ T ]}}} )}] \\
\nonumber
& \le 2( {p + 1} ){F_2}{\varpi _2} + \frac{{2\hat \varpi _1^2}}{\kappa }{T^\kappa } + 20{{\hat \varpi }_3}\sqrt {T{\Psi _T}} + {F_2}( {{{\hat \varpi }_2} + 2} ){\Psi _T} + 2R( \mathbb{X} )\sqrt {T{\Psi _T}} + 2\sqrt 2 R{( \mathbb{X} )^2}\sqrt {\frac{T}{{{\Psi _T}}}}\\
&\;\; + {F_2}\big( {R( \mathbb{X} ) + r( \mathbb{X} )} \big)\big( {\frac{{{{\hat \varpi }_1}}}{\kappa }{T^\kappa } + \log ( T )} \big)  + 2HR{( \mathbb{X} )^2}\sqrt {\frac{1}{{{\Psi _2}}}}  + 2R( \mathbb{X} )\sqrt {\frac{T}{{{\Psi _T}}}} {P_T}, \label{theorem1-proof-eq6}
\end{flalign}
which gives \eqref{theorem1-eq2}.

($\mathbf{ii}$)
Combining \eqref{theorem1-eq1}, \eqref{lemma5-eq2}, \eqref{theorem1-proof-eq1}--\eqref{theorem1-proof-eq5}  yields
\begin{flalign}
\nonumber
& \;\;\;\;\;\mathbf{E}[\frac{1}{n}\sum\limits_{i = 1}^n {{\| {\sum\limits_{t = 1}^T {{{[ {{g_t}( {{x_{i,t}}} )} ]}_ + }} } \|^2}}] \\
\nonumber
&\;\;  \le 4( {p + 1} )n{F_1}{F_2}{\varpi _2}T + 2n{F_1}{F_2}( {{{\hat \varpi }_2} + 2} )T{\Psi _T} + 2( {\frac{1}{{{\gamma _1}}} + \frac{{{T^{1 - \kappa }}}}{{1 - \kappa }} + 80{{\hat \varpi }_3}\sqrt {T{\Psi _T}} } )\Big( {n{F_1}T}\\
\nonumber
&\;\; + 2( {p + 1} )n{F_2}{\varpi _2} + \frac{{2n\hat \varpi _1^2}}{\kappa }{T^\kappa } + 40n{{\hat \varpi }_3}\sqrt {T{\Psi _T}}  + n{F_2}( {{{\hat \varpi }_2} + 2} ){\Psi _T} + 2nR( \mathbb{X} )\sqrt {T{\Psi _T}} \\
&\;\; + 2\sqrt 2 nR{( \mathbb{X} )^2}\sqrt {\frac{T}{{{\Psi _T}}}}  + 2nHR{( \mathbb{X} )^2}\sqrt {\frac{1}{{{\Psi _2}}}} + n{F_2}\big( {R( \mathbb{X} ) + r( \mathbb{X} )} \big)\big( {\frac{{{{\hat \varpi }_1}}}{\kappa }{T^\kappa } + \log ( T )} \big)\Big). \label{theorem1-proof-eq7}
\end{flalign}

From Cauchy--Schwarz inequality, we have
\begin{flalign}
{\big( {\frac{1}{n}\sum\limits_{i = 1}^n {\| {\sum\limits_{t = 1}^T {{{[ {{g_t}( {{x_{i,t}}} )} ]}_ + }} } \|} } \big)^2}
\le \frac{1}{n}\sum\limits_{i = 1}^n {{\| {\sum\limits_{t = 1}^T {{{[ {{g_t}( {{x_{i,t}}} )} ]}_ + }} } \|^2}}. \label{theorem1-proof-eq8}
\end{flalign}

We have
\begin{flalign}
& \sum\limits_{t = 1}^T {\| {{{[ {{g_t}( {{x_{i,t}}} )} ]}_ + }} \|}  \le {\sum\limits_{t = 1}^T {\| {{{[ {{g_t}( {{x_{i,t}}} )} ]}_ + }} \|} _1}  = {\| {\sum\limits_{t = 1}^T {{{[ {{g_t}( {{x_{i,t}}} )} ]}_ + }} } \|_1} \le \sqrt m \| {\sum\limits_{t = 1}^T {{{[ {{g_t}( {{x_{i,t}}} )} ]}_ + }} } \| \label{theorem1-proof-eq9}
\end{flalign}

Combining \eqref{theorem1-proof-eq7}--\eqref{theorem1-proof-eq9} yields \eqref{theorem1-eq3}.
\end{proof}

\hspace{-3mm}\emph{C. Proof of Theorem 2}

Based on Lemma~5, we are now ready to prove Theorem~2.

For any $T \ge 3$, it holds that
\begin{flalign}
\sum\limits_{t = 1}^T {\frac{1}{t}}  \le 1 + \int\limits_1^T {\frac{1}{t}} dt \le 1 + \log ( T ) \le 2\log ( T ). \label{theorem2-proof-eq1}
\end{flalign}

For any constant $b > 1$ and $T \in {\mathbb{N}_ + }$, there exists a constant $M > 0$ such that
\begin{flalign}
\sum\limits_{t = 1}^T {\frac{1}{{{t^b}}}}  \le M. \label{theorem2-proof-eq2}
\end{flalign}

($\mathbf{i}$)
Combining \eqref{theorem2-eq1} with ${\theta _3} \in ( {{\theta _1},1} )$, \eqref{theorem1-proof-eq1}, \eqref{theorem1-proof-eq4} and \eqref{lemma5-eq1} yields
\begin{flalign}
\nonumber
& \;\;\;\;\;\mathbf{E}[{{\rm{Net}\mbox{-}\rm{Reg}}( {\{ {{x_{i,t}}} \},{y_{[ T ]}}} )}] \\
\nonumber
& \le 2( {p + 1} ){F_2}{\varpi _2} + \frac{{2\hat \varpi _1^2{T^{{\theta _2}}}}}{{{\theta _2}}} + \frac{{10{{\hat \varpi }_3}{\alpha _0}{T^{1 - {\theta _1}}}}}{{1 - {\theta _1}}} + \frac{{2R( \mathbb{X} ){\tau _0}{T^{1 + {\theta _1} - {\theta _3}}}}}{{( {1 + {\theta _1} - {\theta _3}} ){\alpha _0}}} + \frac{{{F_2}( {{{\hat \varpi }_2} + 2} ){\tau _0}{T^{1 - {\theta _3}}}}}{{1 - {\theta _3}}}  \\
&\;\; + \frac{{4R{{( \mathbb{X} )}^2}{T^{{\theta _1}}}}}{{{\alpha _0}}} + \frac{{2R( \mathbb{X} ){T^{{\theta _1}}}{P_T}}}{{{\alpha _0}}} + \frac{{2R{{( \mathbb{X} )}^2}( {2 - {\theta _1}} )}}{{{\alpha _0}( {1 - {\theta _1}} )}} + {F_2}\big( {R( \mathbb{X} ) + r( \mathbb{X} )} \big)\big( {\frac{{{{\hat \varpi }_1}{T^{{\theta _2}}}}}{{{\theta _2}}} + \log ( T )} \big). \label{theorem2-proof-eq3}
\end{flalign}
Combining \eqref{theorem2-eq1} with ${\theta _3} = 1$, \eqref{theorem1-proof-eq1}, \eqref{theorem1-proof-eq4}, \eqref{theorem2-proof-eq1} and \eqref{lemma5-eq1} yields
\begin{flalign}
\nonumber
& \;\;\;\;\;\mathbf{E}[{{\rm{Net}\mbox{-}\rm{Reg}}( {\{ {{x_{i,t}}} \},{y_{[ T ]}}} )}] \\
\nonumber
& \le 2( {p + 1} ){F_2}{\varpi _2} + \frac{{2\hat \varpi _1^2{T^{{\theta _2}}}}}{{{\theta _2}}} + \frac{{10{{\hat \varpi }_3}{\alpha _0}{T^{1 - {\theta _1}}}}}{{1 - {\theta _1}}} + \frac{{2R( \mathbb{X} ){\tau _0}{T^{\theta _1}}}}{{ {\theta _1} {\alpha _0}}} + 2{F_2}({\hat \varpi _2} + 2){\tau _0}\log ( T ) \\
&\;\; + \frac{{4R{{( \mathbb{X} )}^2}{T^{{\theta _1}}}}}{{{\alpha _0}}} + \frac{{2R( \mathbb{X} ){T^{{\theta _1}}}{P_T}}}{{{\alpha _0}}} + \frac{{2R{{( \mathbb{X} )}^2}( {2 - {\theta _1}} )}}{{{\alpha _0}( {1 - {\theta _1}} )}}  + {F_2}\big( {R( \mathbb{X} ) + r( \mathbb{X} )} \big)\big( {\frac{{{{\hat \varpi }_1}{T^{{\theta _2}}}}}{{{\theta _2}}} + \log ( T )} \big).  \label{theorem2-proof-eq4}
\end{flalign}
Combining \eqref{theorem2-eq1} with $1 < {\theta _3} < 1 + {\theta _1}$, \eqref{theorem1-proof-eq1}, \eqref{theorem1-proof-eq4}, \eqref{theorem2-proof-eq2} and \eqref{lemma5-eq1} yields
\begin{flalign}
\nonumber
& \;\;\;\;\;\mathbf{E}[{{\rm{Net}\mbox{-}\rm{Reg}}( {\{ {{x_{i,t}}} \},{y_{[ T ]}}} )}] \\
\nonumber
& \le 2( {p + 1} ){F_2}{\varpi _2} + \frac{{2\hat \varpi _1^2{T^{{\theta _2}}}}}{{{\theta _2}}} + \frac{{10{{\hat \varpi }_3}{\alpha _0}{T^{1 - {\theta _1}}}}}{{1 - {\theta _1}}} + \frac{{2R( \mathbb{X} ){\tau _0}{T^{1 + {\theta _1} - {\theta _3}}}}}{{( {1 + {\theta _1} - {\theta _3}} ){\alpha _0}}}  + {F_2}({{\hat \varpi }_2} + 2){\tau _0}M \\
&\;\; + \frac{{4R{{( \mathbb{X} )}^2}{T^{{\theta _1}}}}}{{{\alpha _0}}} + \frac{{2R( \mathbb{X} ){T^{{\theta _1}}}{P_T}}}{{{\alpha _0}}} + \frac{{2R{{( \mathbb{X} )}^2}( {2 - {\theta _1}} )}}{{{\alpha _0}( {1 - {\theta _1}} )}} + {F_2}\big( {R( \mathbb{X} ) + r( \mathbb{X} )} \big)\big( {\frac{{{{\hat \varpi }_1}{T^{{\theta _2}}}}}{{{\theta _2}}} + \log ( T )} \big).  \label{theorem2-proof-eq5}
\end{flalign}
Combining \eqref{theorem2-eq1} with ${\theta _3} = 1 + {\theta _1}$, \eqref{theorem1-proof-eq1}, \eqref{theorem1-proof-eq4}, \eqref{theorem2-proof-eq1}, \eqref{theorem2-proof-eq2} and \eqref{lemma5-eq1} yields
\begin{flalign}
\nonumber
& \;\;\;\;\;\mathbf{E}[{{\rm{Net}\mbox{-}\rm{Reg}}( {\{ {{x_{i,t}}} \},{y_{[ T ]}}} )}] \\
\nonumber
& \le 2( {p + 1} ){F_2}{\varpi _2} + \frac{{2\hat \varpi _1^2{T^{{\theta _2}}}}}{{{\theta _2}}} + \frac{{10{{\hat \varpi }_3}{\alpha _0}{T^{1 - {\theta _1}}}}}{{1 - {\theta _1}}} + \frac{{4R(\mathbb{X}){\tau _0}\log ( T )}}{{{\alpha _0}}} + {F_2}({{\hat \varpi }_2} + 2){\tau _0}M \\
&\;\; + \frac{{4R{{( \mathbb{X} )}^2}{T^{{\theta _1}}}}}{{{\alpha _0}}} + \frac{{2R( \mathbb{X} ){T^{{\theta _1}}}{P_T}}}{{{\alpha _0}}} + \frac{{2R{{( \mathbb{X} )}^2}( {2 - {\theta _1}} )}}{{{\alpha _0}( {1 - {\theta _1}} )}} + {F_2}\big( {R( \mathbb{X} ) + r( \mathbb{X} )} \big)\big( {\frac{{{{\hat \varpi }_1}{T^{{\theta _2}}}}}{{{\theta _2}}} + \log ( T )} \big).  \label{theorem2-proof-eq6}
\end{flalign}
Combining \eqref{theorem2-eq1} with ${\theta _3} > 1 + {\theta _1}$, \eqref{theorem1-proof-eq1}, \eqref{theorem1-proof-eq4}, \eqref{theorem2-proof-eq2} and \eqref{lemma5-eq1} yields
\begin{flalign}
\nonumber
& \;\;\;\;\;\mathbf{E}[{{\rm{Net}\mbox{-}\rm{Reg}}( {\{ {{x_{i,t}}} \},{y_{[ T ]}}} )}] \\
\nonumber
& \le 2( {p + 1} ){F_2}{\varpi _2} + \frac{{2\hat \varpi _1^2{T^{{\theta _2}}}}}{{{\theta _2}}} + \frac{{10{{\hat \varpi }_3}{\alpha _0}{T^{1 - {\theta _1}}}}}{{1 - {\theta _1}}} + \frac{{2R(X){\tau _0}M}}{{{\alpha _0}}} + {F_2}({{\hat \varpi }_2} + 2){\tau _0}M \\
&\;\; + \frac{{4R{{( \mathbb{X} )}^2}{T^{{\theta _1}}}}}{{{\alpha _0}}} + \frac{{2R( \mathbb{X} ){T^{{\theta _1}}}{P_T}}}{{{\alpha _0}}} + \frac{{2R{{( \mathbb{X} )}^2}( {2 - {\theta _1}} )}}{{{\alpha _0}( {1 - {\theta _1}} )}} + {F_2}\big( {R( \mathbb{X} ) + r( \mathbb{X} )} \big)\big( {\frac{{{{\hat \varpi }_1}{T^{{\theta _2}}}}}{{{\theta _2}}} + \log ( T )} \big).  \label{theorem2-proof-eq7}
\end{flalign}

From \eqref{theorem2-proof-eq3}--\eqref{theorem2-proof-eq7}, we have \eqref{theorem2-eq2}.

($\mathbf{ii}$)
Combining \eqref{theorem2-eq1} with ${\theta _3} \in ( {{\theta _1},1} )$, \eqref{theorem1-proof-eq1}, \eqref{theorem1-proof-eq4} and \eqref{lemma5-eq2} yields
\begin{flalign}
\nonumber
& \;\;\;\;\;\mathbf{E}[\frac{1}{n}\sum\limits_{i = 1}^n {{\| {\sum\limits_{t = 1}^T {{{[ {{g_t}( {{x_{i,t}}} )} ]}_ + }} } \|^2}}] \\
\nonumber
&  \le 4( {p + 1} )n{F_1}{F_2}{\varpi _2}T  + \frac{{2n{F_1}{F_2}( {{{\hat \varpi }_2} + 2} ){\tau _0}{T^{2 - {\theta _3}}}}}{{1 - {\theta _3}}} + 2n( {\frac{1}{{{\gamma _1}}} + \frac{{{T^{1 - {\theta _2}}}}}{{1 - {\theta _2}}} + \frac{{40{{\hat \varpi }_3}{\alpha _0}{T^{1 - {\theta _1}}}}}{{1 - {\theta _1}}}} )\Big( {{F_1}T}\\
\nonumber
&  \;\; + 2( {p + 1} ){F_2}{\varpi _2} + \frac{{2\hat \varpi _1^2{T^{{\theta _2}}}}}{{{\theta _2}}} + \frac{{20{{\hat \varpi }_3}{\alpha _0}{T^{1 - {\theta _1}}}}}{{1 - {\theta _1}}} + \frac{{4R{{( \mathbb{X} )}^2}{T^{{\theta _1}}}}}{{{\alpha _0}}} + \frac{{2R{{( \mathbb{X} )}^2}( {2 - {\theta _1}} )}}{{{\alpha _0}( {1 - {\theta _1}} )}} \\
&  \;\; + \frac{{2R( \mathbb{X} ){\tau _0}{T^{1 + {\theta _1} - {\theta _3}}}}}{{{\alpha _0}( {1 + {\theta _1} - {\theta _3}} )}} + \frac{{{F_2}( {{{\hat \varpi }_2} + 2} ){\tau _0}{T^{1 - {\theta _3}}}}}{{1 - {\theta _3}}} + {F_2}\big( {R( \mathbb{X} ) + r( \mathbb{X} )} \big)\big( {\frac{{{{\hat \varpi }_1}{T^{{\theta _2}}}}}{{{\theta _2}}} + \log ( T )} \big)\Big). \label{theorem2-proof-eq8}
\end{flalign}
Combining \eqref{theorem2-eq1} with ${\theta _3} = 1$, \eqref{theorem1-proof-eq1}, \eqref{theorem1-proof-eq4}, \eqref{theorem2-proof-eq1} and \eqref{lemma5-eq2} yields
\begin{flalign}
\nonumber
& \;\;\;\;\;\mathbf{E}[\frac{1}{n}\sum\limits_{i = 1}^n {{\| {\sum\limits_{t = 1}^T {{{[ {{g_t}( {{x_{i,t}}} )} ]}_ + }} } \|^2}}] \\
\nonumber
&  \le 4( {p + 1} )n{F_1}{F_2}{\varpi _2}T + 4n{F_1}{F_2}({{\hat \varpi }_2} + 2){\tau _0}T\log ( T ) + 2n( {\frac{1}{{{\gamma _1}}} + \frac{{{T^{1 - {\theta _2}}}}}{{1 - {\theta _2}}} + \frac{{40{{\hat \varpi }_3}{\alpha _0}{T^{1 - {\theta _1}}}}}{{1 - {\theta _1}}}} )\Big( {{F_1}T}\\
\nonumber
&  \;\;  + 2( {p + 1} ){F_2}{\varpi _2} + \frac{{2\hat \varpi _1^2{T^{{\theta _2}}}}}{{{\theta _2}}} + \frac{{20{{\hat \varpi }_3}{\alpha _0}{T^{1 - {\theta _1}}}}}{{1 - {\theta _1}}} + \frac{{4R{{( \mathbb{X} )}^2}{T^{{\theta _1}}}}}{{{\alpha _0}}} + \frac{{2R{{( \mathbb{X} )}^2}( {2 - {\theta _1}} )}}{{{\alpha _0}( {1 - {\theta _1}} )}} \\
&  \;\; + \frac{{2R(\mathbb{X}){\tau _0}{T^{{\theta _1}}}}}{{{\alpha _0}{\theta _1}}} + 2{F_2}({{\hat \varpi }_2} + 2){\tau _0}\log ( T )  + {F_2}\big( {R( \mathbb{X} ) + r( \mathbb{X} )} \big)\big( {\frac{{{{\hat \varpi }_1}{T^{{\theta _2}}}}}{{{\theta _2}}} + \log ( T )} \big)\Big). \label{theorem2-proof-eq9}
\end{flalign}
Combining \eqref{theorem2-eq1} with $1 < {\theta _3} < 1 + {\theta _1}$, \eqref{theorem1-proof-eq1}, \eqref{theorem1-proof-eq4}, \eqref{theorem2-proof-eq2} and \eqref{lemma5-eq2} yields
\begin{flalign}
\nonumber
& \;\;\;\;\;\mathbf{E}[\frac{1}{n}\sum\limits_{i = 1}^n {{\| {\sum\limits_{t = 1}^T {{{[ {{g_t}( {{x_{i,t}}} )} ]}_ + }} } \|^2}}] \\
\nonumber
&  \le 4( {p + 1} )n{F_1}{F_2}{\varpi _2}T + 4n{F_1}{F_2}({{\hat \varpi }_2} + 2){\tau _0}MT + 2n( {\frac{1}{{{\gamma _1}}} + \frac{{{T^{1 - {\theta _2}}}}}{{1 - {\theta _2}}} + \frac{{40{{\hat \varpi }_3}{\alpha _0}{T^{1 - {\theta _1}}}}}{{1 - {\theta _1}}}} )\Big( {{F_1}T}\\
\nonumber
&  \;\;  + 2( {p + 1} ){F_2}{\varpi _2}  + \frac{{2\hat \varpi _1^2{T^{{\theta _2}}}}}{{{\theta _2}}} + \frac{{20{{\hat \varpi }_3}{\alpha _0}{T^{1 - {\theta _1}}}}}{{1 - {\theta _1}}} + \frac{{4R{{( \mathbb{X} )}^2}{T^{{\theta _1}}}}}{{{\alpha _0}}} + \frac{{2R{{( \mathbb{X} )}^2}( {2 - {\theta _1}} )}}{{{\alpha _0}( {1 - {\theta _1}} )}} \\
&  \;\;  + \frac{{2R( \mathbb{X} ){\tau _0}{T^{1 + {\theta _1} - {\theta _3}}}}}{{{\alpha _0}( {1 + {\theta _1} - {\theta _3}} )}} + {F_2}({{\hat \varpi }_2} + 2){\tau _0}M + {F_2}\big( {R( \mathbb{X} ) + r( \mathbb{X} )} \big)\big( {\frac{{{{\hat \varpi }_1}{T^{{\theta _2}}}}}{{{\theta _2}}} + \log ( T )} \big)\Big). \label{theorem2-proof-eq10}
\end{flalign}
Combining \eqref{theorem2-eq1} with ${\theta _3} = 1 + {\theta _1}$, \eqref{theorem1-proof-eq1}, \eqref{theorem1-proof-eq4}, \eqref{theorem2-proof-eq1}, \eqref{theorem2-proof-eq2} and \eqref{lemma5-eq2} yields
\begin{flalign}
\nonumber
& \;\;\;\;\;\mathbf{E}[\frac{1}{n}\sum\limits_{i = 1}^n {{\| {\sum\limits_{t = 1}^T {{{[ {{g_t}( {{x_{i,t}}} )} ]}_ + }} } \|^2}}] \\
\nonumber
&  \le 4( {p + 1} )n{F_1}{F_2}{\varpi _2}T + 4n{F_1}{F_2}({{\hat \varpi }_2} + 2){\tau _0}MT + 2n( {\frac{1}{{{\gamma _1}}} + \frac{{{T^{1 - {\theta _2}}}}}{{1 - {\theta _2}}} + \frac{{40{{\hat \varpi }_3}{\alpha _0}{T^{1 - {\theta _1}}}}}{{1 - {\theta _1}}}} )\Big( {{F_1}T}\\
\nonumber
&  \;\;  + 2( {p + 1} ){F_2}{\varpi _2} + \frac{{2\hat \varpi _1^2{T^{{\theta _2}}}}}{{{\theta _2}}} + \frac{{20{{\hat \varpi }_3}{\alpha _0}{T^{1 - {\theta _1}}}}}{{1 - {\theta _1}}} + \frac{{4R{{( \mathbb{X} )}^2}{T^{{\theta _1}}}}}{{{\alpha _0}}} + \frac{{2R{{( \mathbb{X} )}^2}( {2 - {\theta _1}} )}}{{{\alpha _0}( {1 - {\theta _1}} )}}\\
&  \;\; + \frac{{4R(\mathbb{X}){\tau _0}\log ( T )}}{{{\alpha _0}}} + {F_2}({{\hat \varpi }_2} + 2){\tau _0}M + {F_2}\big( {R( \mathbb{X} ) + r( \mathbb{X} )} \big)\big( {\frac{{{{\hat \varpi }_1}{T^{{\theta _2}}}}}{{{\theta _2}}} + \log ( T )} \big)\Big). \label{theorem2-proof-eq11}
\end{flalign}
Combining \eqref{theorem2-eq1} with ${\theta _3} > 1 + {\theta _1}$, \eqref{theorem1-proof-eq1}, \eqref{theorem1-proof-eq4}, \eqref{theorem2-proof-eq2} and \eqref{lemma5-eq2} yields
\begin{flalign}
\nonumber
& \;\;\;\;\;\mathbf{E}[\frac{1}{n}\sum\limits_{i = 1}^n {{\| {\sum\limits_{t = 1}^T {{{[ {{g_t}( {{x_{i,t}}} )} ]}_ + }} } \|^2}}] \\
\nonumber
&  \le 4( {p + 1} )n{F_1}{F_2}{\varpi _2}T + 4n{F_1}{F_2}({{\hat \varpi }_2} + 2){\tau _0}MT + 2n( {\frac{1}{{{\gamma _1}}} + \frac{{{T^{1 - {\theta _2}}}}}{{1 - {\theta _2}}} + \frac{{40{{\hat \varpi }_3}{\alpha _0}{T^{1 - {\theta _1}}}}}{{1 - {\theta _1}}}} )\Big( {{F_1}T}\\
\nonumber
&  \;\;  + 2( {p + 1} ){F_2}{\varpi _2} + \frac{{2\hat \varpi _1^2{T^{{\theta _2}}}}}{{{\theta _2}}} + \frac{{20{{\hat \varpi }_3}{\alpha _0}{T^{1 - {\theta _1}}}}}{{1 - {\theta _1}}} + \frac{{4R{{( \mathbb{X} )}^2}{T^{{\theta _1}}}}}{{{\alpha _0}}} + \frac{{2R{{( \mathbb{X} )}^2}( {2 - {\theta _1}} )}}{{{\alpha _0}( {1 - {\theta _1}} )}}\\
&  \;\;  + \frac{{2R(\mathbb{X}){\tau _0}M}}{{{\alpha _0}}} + {F_2}({{\hat \varpi }_2} + 2){\tau _0}M  + {F_2}\big( {R( \mathbb{X} ) + r( \mathbb{X} )} \big)\big( {\frac{{{{\hat \varpi }_1}{T^{{\theta _2}}}}}{{{\theta _2}}} + \log ( T )} \big)\Big). \label{theorem2-proof-eq12}
\end{flalign}

Combining \eqref{theorem1-proof-eq8}, \eqref{theorem1-proof-eq9} and \eqref{theorem2-proof-eq8}--\eqref{theorem2-proof-eq12} yields \eqref{theorem2-eq3}.

\bibliographystyle{IEEEtran}
\bibliography{reference_online}

\end{document}